\providecommand{\tabularnewline}{\\}
  \theoremstyle{definition}
  \newtheorem{defn}{\protect\definitionname}
\theoremstyle{plain}
\newtheorem{thm}{\protect\theoremname}
 \theoremstyle{definition}
  \newtheorem{example}{\protect\examplename}
  \providecommand{\definitionname}{Definition}
  \providecommand{\examplename}{Example}
\providecommand{\theoremname}{Theorem}
\begin{document}

\title{Nonlinear Normal Modes and Spectral Submanifolds: \\
Existence, Uniqueness and Use in Model Reduction}

\author{George Haller\thanks{Corresponding author. Email: georgehaller@ethz.ch}
~~and Sten Ponsioen}

\maketitle
\begin{center}
Institute for Mechanical Systems, ETH Zürich \\
Leonhardstrasse 21, 8092 Zürich, Switzerland
\par\end{center}
\begin{abstract}
We propose a unified approach to nonlinear modal analysis in dissipative
oscillatory systems. This approach eliminates conflicting definitions,
covers both autonomous and time-dependent systems, and provides exact
mathematical existence, uniqueness and robustness results. In this
setting, a nonlinear normal mode (NNM) is a set filled with small-amplitude
recurrent motions: a fixed point, a periodic orbit or the closure
of a quasiperiodic orbit. In contrast, a spectral submanifold (SSM)
is an invariant manifold asymptotic to a NNM, serving as the smoothest
nonlinear continuation of a spectral subspace of the linearized system
along the NNM. The existence and uniqueness of SSMs turns out to depend
on a spectral quotient computed from the real part of the spectrum
of the linearized system. This quotient may well be large even for
small dissipation, thus the inclusion of damping is essential for
firm conclusions about NNMs, SSMs and the reduced-order models they
yield.
\end{abstract}

\section{Introduction}

Decomposing nonlinear oscillations in analogy with linear modal analysis
has been an exciting perspective for several decades in multiple disciplines.
In the engineering mechanics literature, this approach was initiated
by Rosenberg \cite{rosenberg62}, who defines a nonlinear normal mode
in a conservative system as a synchronous periodic oscillation that
reaches its maximum in all modal coordinates at the same time. Shaw
and Pierre \cite{shaw93} offers an elegant alternative, envisioning
nonlinear normal modes as invariant manifolds that are locally graphs
over two-dimensional modal subspaces of the linearized system. These
definitions have subsequently been relaxed and generalized to different
settings, as surveyed by the recent reviews of Avramov and Mikhlin
\cite{avramov10,avramov13}, Kerschen \cite{kerschen14} and Renson
et al. \cite{renson16}.

In conservative autonomous systems, a relationship between the above
two views on nonlinear normal modes is established by the subcenter-manifold
theorem of Lyapunov \cite{lyapunov92}. In its strongest version due
to Kelley \cite{kelley69}, this theorem guarantees that unique and
analytic invariant manifolds tangent to two-dimensional modal subspaces
of the linearized system at an elliptic fixed point persist in an
analytic nonlinear system under appropriate nonresonance conditions.
These persisting manifolds are in turn filled with periodic orbits.
Roughly speaking, therefore, conservative Shaw--Pierre-type normal
modes are just surfaces composed of Rosenberg-type normal modes, if
one relaxes Rosenberg's synchrony requirement, as is routinely done
in the literature. 

A similar relationship, however, is absent between the two normal
mode concepts for non-conservative or non-autonomous systems.  In
such settings, periodic orbits become rare and isolated in the phase
space. At the same time, either no or infinitely many invariant manifolds
tangent to eigenspaces may exist, most often without containing any
periodic orbit. Having then identical terminology for two such vastly
different concepts is clearly less than optimal. Furthermore, while
both dissipative normal mode concepts are inspired by nonlinear dynamical
systems theory, neither of the two has been placed on firm mathematical
foundations comparable to other classic concepts in nonlinear dynamics,
such as stable, unstable and center manifolds near equilibria (see,
e.g., Guckenheimer and Holmes \cite{guckenheimer83} for a survey).

Indeed, as Neild et al. \cite{neild15} observe, the envisioned Shaw--Pierre-type
invariant surfaces are already non-unique in the linearized system,
and there is no known result guaranteeing their persistence as nonlinear
normal modes in the full nonlinear system. These authors propose normal
form theory as a more expedient computational tool to investigate
near-equilibrium dynamics for model reduction purposes. Truncated
normal forms, however, offer no a priori guarantee for the actual
existence of the structures they predict either. Rather, the persistence
of such structures needs to be investigated on a case by case basis
either numerically or via mathemtical analysis. 

Cirillo et al. \cite{cirillo15a,cirillo15b} also observe the non-uniqueness
of invariant manifolds tangent to eigenspaces in a two-dimensional
linear example. They point out that only one of these manifolds is
infinitely many times differentiable, then state without further analysis
that there is a unique, analytic Pierre--Shaw type invariant surface
tangent to any two-dimensional modal subspace of a nonlinear system.
While a proof of this claim is yet to be provided, the authors also
put forward a computational technique for the construction of invariant
manifolds on larger domains of the phase space. Their proposed approach
is actually a special case of the classic \emph{parametrization method}
(see, e.g., Cabré et al. \cite{cabre05} for a historical and technical
survey), which forms the basis of some of the rigorous invariant manifold
results we will use in the present paper. 

The above concerns about an ambiguity in the definition of Shaw--Pierre
type normal modes have been sporadic in the literature. One reason
might be the general expectation that if one manages to compute arbitrarily
many terms in the Taylor series approximation of an envisioned invariant
surface, then that surface is bound to exist and be unique. While
the success of a low-order numerical or Taylor approximation to an
envisioned invariant manifold is certainly encouraging, by no means
does it give any guarantee for the existence of a unique manifold.
This classic issue is well-documented for the divergence of Lindstedt
series for invariant tori in conservative systems (Arnold \cite{arnold89}).
For dissipative systems, an early example of a divergent expansion
for an invariant manifold was already pointed out by Euler \cite{euler24}
(cf. Arnold \cite{arnold88}). 

We recall Euler's example here briefly in a slightly altered form
relevant for damped vibrations. Consider the planar dynamical system
\begin{eqnarray}
\dot{x} & = & -x^{2},\nonumber \\
\dot{y} & = & -y+x,\label{eq:Euler's example}
\end{eqnarray}
whose right-hand side is analytic on the whole $(x,y)$ plane. A formal
Taylor series for a center manifold tangent to the $x$ axis at the
origin is computable up to any order, but diverges for any $x\neq0$.
Therefore, the formal Taylor expansion of the center manifold does
not converge to any analytic invariant manifold (cf. Appendix \ref{sub:Euler's-example-of}
for details). Accordingly, there is a continuous family of non-unique,
non-analytic center manifolds with vastly different global shapes
for $x>0$ (cf. Fig. \ref{fig:Eulerfig}). None of these manifolds
is distinguished in any way. Approximating any one of them analytically
or numerically, then reducing the full system to this approximation
leads to a highly arbitrary reduced model outside a neighborhood of
the fixed point.

\begin{figure}[H]
\begin{centering}
\includegraphics[width=0.4\textwidth]{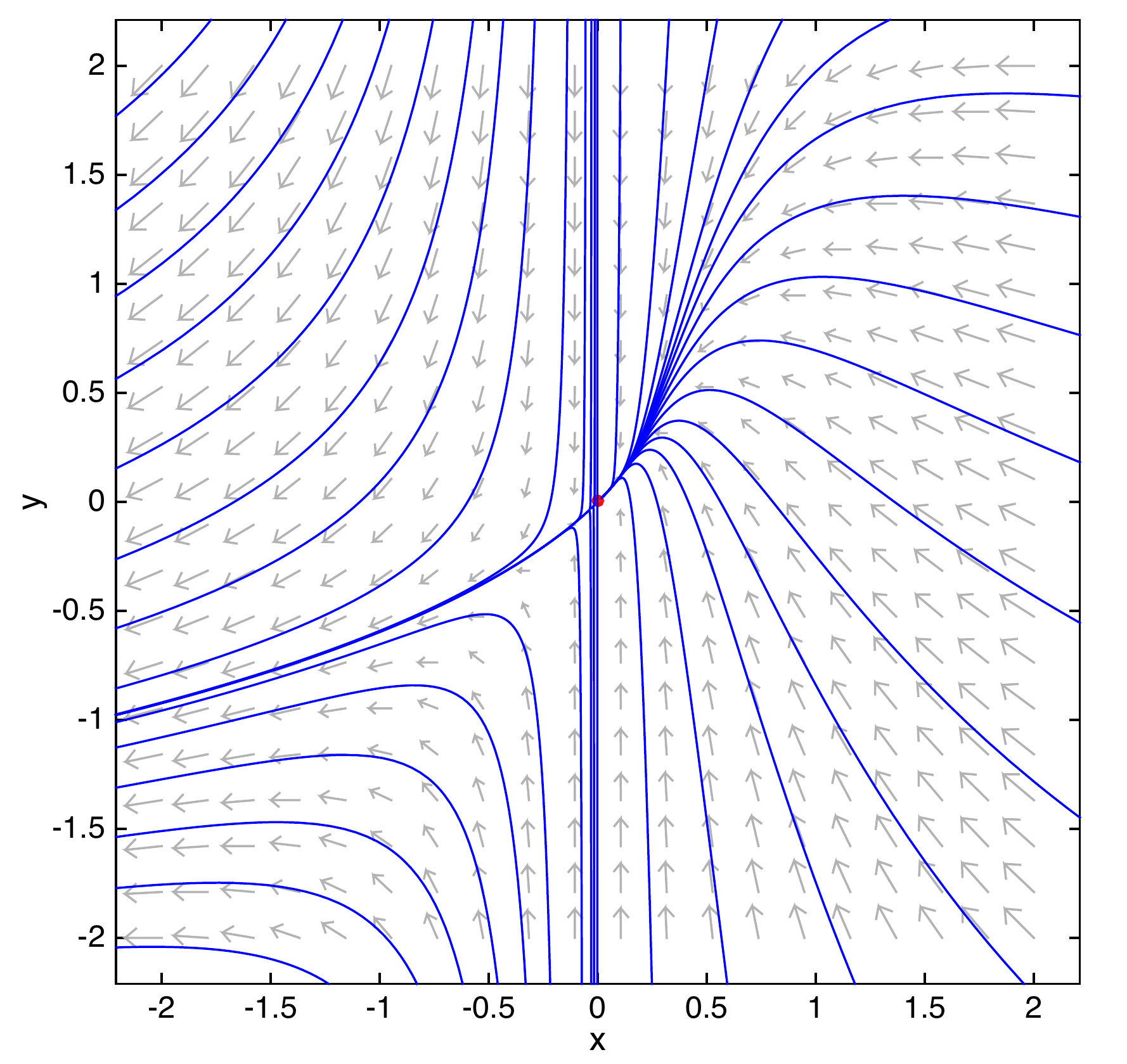}\caption{Phase portrait of the dynamical system \eqref{eq:Euler's example},
showing infinitely many $C^{\infty}$ invariant manifolds with vastly
different global behaviors. The formal computability of the common
Taylor expansion of these manifolds up to any order, therefore,  does
not imply their uniqueness.\label{fig:Eulerfig}}

\par\end{centering}

\end{figure}

The global phase space dynamics of higher-dimensional systems cannot
be visualized in such a simple way as in Fig. \ref{fig:Eulerfig}.
Accordingly, the non-uniqueness of Shaw--Pierre-type invariant surfaces
is often overlooked or ignored in computational studies for multi-degree-of-freeedom
problems (see Renson et al. \cite{renson16} for a recent review).
Some of these approaches solve a PDE for the invariant manifold with
ill-posed boundary conditions; others use the modal subspaces of the
linearization to set boundary conditions away from the fixed point;
yet others envision a uniquely defined boundary condition that they
determine by minimizing an ad hoc cost function. (cf. Appendix \ref{sub:Existence-and-uniqueness issues for numerically solved PDEs}
for details). In all cases, the computed invariant manifold depends
on the choice of basis functions, or domain boundaries, or cost functions
used in the process. The resulting ambiguities in the solutions are
small close to the equilibrium, but are vastly amplified over larger
domains where nonlinear normal mode analysis is meant to surpass the
results from linearization (cf. Fig. \ref{fig:Eulerfig}). 

Here we discuss a unified mathematical approach to nonlinear normal
modes in dissipative systems to address these issues. First, we propose
eliminating the ambiguity in the terminology itself. Borrowing the
original concept of Rosenberg \cite{rosenberg62} from conservative
systems, we call a near-equilibrium quasiperiodic motion in a dissipative,
nonlinear system a \emph{nonlinear normal mode} (NNM). Such NNMs are
certainly special, but the invariant surfaces envisioned in the seminal
work of Shaw and Pierre \cite{shaw93} are arguably more influential
for the overall system dynamics, and can be viewed as invariant surfaces
asymptoting to eigenspaces along a NNM. To emphasize this distinction,
we will refer to the smoothest member of an invariant manifold family
tangent to a modal subbundle along an NNM as a \emph{spectral submanifold}
(SSM). Our precise definitions of NNMs and SSMs (to be given in Definitions
1 and 2) are general enough to apply to both autonomous and externally
forced systems with finitely many forcing frequencies.

With this terminology at hand, we employ classical invariant manifold
results of Fenichel \cite{fenichel71} and more recent invariant manifold
results of Cabré et al. \cite{cabre03} and Haro and de la Llave \cite{haro06}
to deduce existence, uniqueness, regularity, and robustness theorems
for NNMs and SSMs, respectively. The conditions of these theorems
are computable solely from the spectrum of the linearized system.
Contrary to common expectation in vibration theory, however, the mathematical
conditions for NNMs and SSMs are more affected by the real part of
the spectrum, rather than the imaginary part (i.e., frequencies) of
the oscillations. Therefore, even weak damping should be carefully
considered and analyzed, rather than ignored, if one wishes to construct
robust SSMs for model reduction purposes. We illustrate our results
on simple, low-dimensional examples, and discuss the relevance of
our findings for model reduction. More detailed numerical examples
of higher-dimensional mechanical systems will be treated elsewhere.

\section{Set up}

Our study is motivated by, but not restricted to, $n$-degree of freedom
mechanical systems of the form
\begin{eqnarray}
M\ddot{q}+\left(C+G\right)\dot{q}+\left(K+B\right)q & = & F_{0}(q,\dot{q})+\epsilon F_{1}(q,\dot{q,}\Omega_{1}t,\ldots,\Omega_{k}t;\epsilon),\qquad0\leq\epsilon\ll1,\label{eq:system}\\
F_{0}(q,\dot{q}) & = & \mathcal{O}\left(\left|q\right|^{2},\left|q\right|\left|\dot{q}\right|,\left|\dot{q}\right|^{2}\right),
\end{eqnarray}
where $q=(q_{1},\ldots q_{n})\in U\subset\mathbb{R}^{n}$ is the vector
of generalized coordinates defined on an open set $U$; $M=M^{T}\in\mathbb{R}^{n\times n}$
is the positive definite mass matrix; $C=C^{T}\in\mathbb{R}^{n\times n}$
is a positive semi-definite damping matrix; $G=-G^{T}\in\mathbb{R}^{n\times n}$
is the gyroscopic matrix; $K=K{}^{T}\in\mathbb{R}^{n\times n}$ is
a positive semidefinite stiffness matrix; $B=-B^{T}\in\mathbb{R}^{n\times n}$
is the coefficient matrix of follower forces; the vector $F_{0}\in\mathbb{R}^{n}$
represents autonomous nonlinearities; and the vector $F_{1}\in\mathbb{R}^{n}$
denotes external forcing with the frequency vector $\Omega=\left(\Omega_{1},\ldots,\Omega_{k}\right)\in\mathbb{R}^{k}$
with $k\geq0.$ Note that $F_{1}(q,\dot{q,}\Omega_{1}t,\ldots,\Omega_{k}t)$
is not necessarily nonlinear, and hence can in principle be large
even when $\left|q\right|$ and $\left|\dot{q}\right|$ are small.
In the special case of $k=0$, the external forcing is autonomous,
while in the case of $k=1$, the external forcing is time-periodic.
For $k>1$, the external forcing is quasiperiodic if at least two
of the frequencies $\Omega_{j}$ are rationally incommensurate. We
assume both $F_{0}$ and $F_{1}$ to be of class $C^{r}$ in their
arguments, where $r$ is either a nonnegative integer, $\infty$,
or equal to $a$, with $C^{a}$ referring to analytic functions. In
short, we assume
\begin{equation}
r\in\mathbb{N}^{+}\cup\left\{ \infty,a\right\} .\label{eq:r-def}
\end{equation}

For $\epsilon=0$, system \eqref{eq:system} has an equilibrium point
at $q=0.$ Linear oscillations around this equilibrium point are governed
by the spectral properties of the linearized system on the left-hand
side of \eqref{eq:system}. Our main interest here is the relevance
of these linear oscillations for the dynamics of the full system \eqref{eq:system}.
A strict mathematical relationship between linear and nonlinear oscillations
can only be expected near the equilibrium (i..e, for small values
of $\left|q\right|$ and $\left|\dot{q}\right|$) and for small values
of the forcing parameter $\epsilon$. We seek to establish, however,
the existence of nonlinear sets of solutions near the equilibrium
that continue to extend to larger domains of the phase space and hence
exert a more global influence on the system dynamics.

After the change of variables $x_{1}=q$, $x_{2}=\dot{q}$, the evolution
of the vector $x=(x_{1},x_{2})\in\mathcal{U}=U\times\mathbb{R}^{n}$
is governed by the first-order differential equation
\begin{equation}
\dot{x}=Ax+f_{0}(x)+\epsilon f_{1}(x,\Omega t;\epsilon),\qquad f_{0}(x)=\mathcal{O}(\left|x\right|^{2}),\qquad0\leq\epsilon\ll1,\label{eq:1storder_system}
\end{equation}
with a constant matrix $A\in\mathbb{R}^{N\times N},$ and with the
class $C^{r}$ functions $f_{0}\colon\mathcal{U}\to\mathbb{R}^{N}$
and $f_{1}\colon\mathcal{U}\times\mathbb{T}^{k}\to\mathbb{R}^{N}$,
where $\mathbb{T}^{k}=S^{1}\times\ldots\times S^{1}$ is the $k$-dimensional
torus. 

As long as $A$, $f_{0}$ and $f_{1}$ are of the general form stated
above, their specific form will be unimportant for our forthcoming
discussion, as we state all results in terms of the ODE \eqref{eq:1storder_system}.
If, however, the ODE \eqref{eq:1storder_system} arises from the mechanical
system \eqref{eq:system}, then we specifically have $N=2n$ and 
\[
A=\left(\begin{array}{cc}
0 & I\\
-M^{-1}(K+B) & -M^{-1}\left(C+G\right)
\end{array}\right),
\]
\[
f_{0}(x)=\left(\begin{array}{c}
0\\
M^{-1}F_{0}(x_{1},x_{2})
\end{array}\right),\qquad f_{1}(x,\Omega t)=\left(\begin{array}{c}
0\\
M^{-1}F_{1}(x_{1},x_{2},\Omega_{1}t,\ldots,\Omega_{k}t)
\end{array}\right).
\]

\section{Linear spectral geometry: Eigenspaces, normal modes, spectral subspaces,
and invariant manifolds}

\subsection{Eigenvalues}

The linear, unperturbed part of system \eqref{eq:1storder_system}
is 
\begin{equation}
\dot{x}=Ax.\label{eq:linearization}
\end{equation}
The matrix $A$ has $N$ eigenvalues $\lambda_{j}\in\mathbb{C}$,
$j=1,\ldots,N,$ with multiplicities counted. We order these eigenvalues
so that their real parts form a decreasing sequence under increasing
$j:$
\begin{equation}
\mathrm{Re}\lambda_{N}\leq\mathrm{Re}\lambda_{N-1}\leq\ldots\ldots\leq\mathrm{Re}\lambda_{1}.\label{eq:eigenvalue_ordering}
\end{equation}
We denote the algebraic multiplicity of $\lambda_{j}$ (i.e., its
multiplicity as a root of the characteristic equation of $A$) by
$\mathrm{alg}\,(\lambda_{j})$, and its geometric multiplicity (i.e.,
the number of independent eigenvectors corresponding to $\lambda_{j}$)
by $\mathrm{geo}\,(\lambda_{j}).$ We recall that $A$ is called semisimple
if $\mathrm{alg}\,(\lambda_{j})=\mathrm{geo}\,(\lambda_{j})$ holds
for all $\lambda_{j}$. This is always the case if all eigenvalues
are distinct or $A$ is symmetric. When $A$ is not semisimple, then
some of its eigenvalues satisfy $\mathrm{alg}\,(\lambda_{j})>\mathrm{geo}\,(\lambda_{j}),$
leading to nontrivial blocks in the Jordan decomposition of $A$.
A good reference for this and other forthcoming aspects of linear
dynamical systems is Hirsch, Smale and Devaney \cite{hirsch13}.

\subsection{Eigenspaces}

For each distinct eigenvalue $\lambda_{j}$, there exists a real \emph{eigenspace}
$E_{j}\subset\mathrm{\mathbb{R}}^{N}$ spanned by the imaginary and
real parts of the corresponding eigenvectors and generalized eigenvectors
of $A$. We have $\dim E_{j}=\mathrm{alg}\,(\lambda_{j})$ in case
$\mathrm{Im\,}\lambda_{j}=0$, while we have $\dim E_{j}=2\times\mathrm{alg}\,(\lambda_{j})$
in case $\mathrm{Im\,}\lambda_{j}\neq0.$ In the latter case, $E_{j}\equiv E_{j+1}$
because $\lambda_{j}=\bar{\lambda}_{j+1}$. That is, the real eigenspaces
associated with each of two complex conjugate eigenvalues coincide
with each other.

An eigenspace $E_{j}$ also represents an invariant subspace for the
linearized system \eqref{eq:linearization}, filled with trajectories
of this system corresponding to the eigenvalue $\lambda_{j}$. Specifically,
we have 
\begin{equation}
E_{j}=\underset{t\in\mathbb{R}}{\mathrm{span}}\left\{ \,\,\,\,e^{\mathrm{Re}\lambda_{j}t}\cos\left[\mathrm{Im}\left(\lambda_{j}\right)t\right]\sum_{\alpha=1}^{\mathrm{alg}\,(\lambda_{j})}a_{j}^{\alpha}t^{\alpha-1};\,\,\,\,\,e^{\mathrm{Re}\lambda_{j}t}\sin\left[\mathrm{Im}\left(\lambda_{j}\right)t\right]\sum_{\alpha=1}^{\mathrm{alg}\,(\lambda_{j})}b_{j}^{\alpha}t^{\alpha-1}\,\,\,\,\right\} \label{eq:eigenspace0}
\end{equation}
for appropriate real vectors $a_{j}^{\alpha},b_{j}^{\alpha}\in\mathbb{R}^{N}$.
In the generic case, $\lambda_{j}$ is a simple real or simple complex
eigenvalue, in which case $E_{j}$ is one- or two-dimensional, respectively.

\subsection{Linear normal modes}

The classic definition of a\emph{ linear normal mode} refers to a
periodic solution of the linear system \eqref{eq:linearization},
arising from an eigenvalue $\lambda_{j}$ with $\mathrm{Re}\lambda_{j}=0$
and $\mathrm{alg}\,(\lambda_{j})=\mathrm{geo}\,(\lambda_{j})$. In
this case, normal modes fill the full eigenspace of $\lambda_{j}$,
i.e., we have 
\begin{equation}
E_{j}=\underset{t\in\mathbb{R}}{\mathrm{span}}\left\{ \,\,\,a_{j}^{1}\cos\left[\mathrm{Im}\left(\lambda_{j}\right)t\right],\ldots,a_{j}^{\mathrm{alg}\,(\lambda_{j})}\cos\left[\mathrm{Im}\left(\lambda_{j}\right)t\right];\,\,b_{j}^{1}\sin\left[\mathrm{Im}\left(\lambda_{j}\right)t\right],\ldots,b_{j}^{\mathrm{alg}\,(\lambda_{j})}\sin\left[\mathrm{Im}\left(\lambda_{j}\right)t\right]\,\,\,\right\} ,\label{eq:eigenspace}
\end{equation}
with the vectors $a_{j}^{\alpha},b_{j}^{\alpha}$ appearing in \eqref{eq:eigenspace0},
and with $\dim E_{j}=2\times\mathrm{alg}\,(\lambda_{j})$. In case
of a linear mechanical system without symmetries, the eigenvalues
$\lambda_{j}=i\omega_{j}$ generating normal modes are typically simple.
In that case, we have $\mathrm{alg}\,(\lambda_{j})=\mathrm{geo}\,(\lambda_{j})=1$,
and $\dim E_{j}=2$. The normal mode family of period $T_{j}=2\pi/\omega_{j}$
then spans the two-dimensional invariant plane $E_{j}$ in the phase
space of the linear system \eqref{eq:linearization}

The fixed point $x=0$ of the linear system \eqref{eq:linearization}
can also be considered as a singular normal mode when viewed as a
periodic motion of arbitrary period. This \emph{trivial normal mode},
however, is isolated and does not form a family spanning a nontrivial
subspace. Yet, this representation of the fixed point as a periodic
orbit becomes useful when we seek its continuation under small forcing
$(\epsilon>0$) in the perturbed equation \eqref{eq:1storder_system}.
The fixed point will generally not survive, but a unique periodic
or quasiperiodic orbit mimicking the stability of the fixed point
will often exist, as we discuss below.

\subsection{Spectral subspaces}

By linearity, a subspace spanned by any combination of eigenspaces
is also invariant under the dynamics of the linear system \eqref{eq:linearization}.
Specifically, a \emph{spectral subspace
\begin{equation}
E_{j_{1},\ldots,j_{q}}=E_{j_{1}}\oplus E_{j_{2}}\oplus\ldots\oplus E_{j_{q}}=\left\{ v\in\mathbb{R}^{N}\,:\,v=\sum_{i=1}^{q}v_{i},\quad v_{i}\in E_{j_{i}},\quad E_{j_{l}}\neq E_{j_{k}},\,\,\,\,k,l=1,\ldots,q\,\,\right\} ,\label{eq:spectral subspace}
\end{equation}
}with\emph{ $\oplus$} denoting the direct sum of vector spaces, is
an invariant subspace of system\emph{ }\eqref{eq:linearization}.
The definition \eqref{eq:spectral subspace} avoids double-counting
the same real eigenspace corresponding to complex conjugate eigenvalues.
Also, by definition, any single eigenspace $E_{j}$ is also a spectral
subspace. 

Classic examples of spectral subspaces include the \emph{stable subspace}
$E^{s}$, the \emph{unstable subspace} $E^{u}$ and the \emph{center
subspace} $E^{c}$. In the presence of $n_{s}$, $n_{u}$ and $n_{c}$
eigenvalues with negative, positive and zero real parts, respectively,
these classic spectral subspaces are defined as 
\begin{eqnarray}
E^{s} & = & \left\{ v\in\mathbb{R}^{N}\,:\,v=\sum_{i=1}^{n_{s}}v_{i},\quad v_{i}\in E_{j_{i}},\quad\mathrm{Re}\lambda_{j_{i}}<0,\quad i=1,\ldots,n_{s}\right\} ,\nonumber \\
E^{u} & = & \left\{ v\in\mathbb{R}^{N}\,:\,v=\sum_{i=1}^{n_{u}}v_{i},\quad v_{i}\in E_{j_{i}},\quad\mathrm{Re}\lambda_{j_{i}}>0,\quad i=1,\ldots,n_{u}\right\} ,\label{eq:invariant subspaces}\\
E^{c} & = & \left\{ v\in\mathbb{R}^{N}\,:\,v=\sum_{i=1}^{n_{c}}v_{i},\quad v_{i}\in E_{j_{i}},\quad\mathrm{Re}\lambda_{j_{i}}=0,\quad i=1,\ldots,n_{c}\right\} .\nonumber 
\end{eqnarray}

Linearized oscillatory systems in mechanics often have only decaying
solutions due to the presence of damping on an otherwise conservative
system of oscillators. In that case, $E^{s}=\mathbb{R}^{N}$ and $E^{u}=E^{c}=\emptyset.$
If, in addition, all eigenvalues $\lambda_{j}$ are distinct and complex,
then the minimal spectral subspaces are formed by the two-dimensional
eigenspaces $E_{j}$. Again, any direct sum of these two-dimensional
eigenspaces is a spectral subspace by the above definition.

\subsection{Invariant manifolds in the linearized system\label{sub:Spectral-submanifolds-in-linear-systems}}

For simplicity, we assume here that the matrix $A$ has only distinct
eigenvalues. We make this assumption here only for ease of exposition,
and will drop it later in our results for the full nonlinear system.

In its eigenbasis, $A$ is then diagonal and the linearized system
\eqref{eq:linearization} can be written in the complexified form
\begin{equation}
\dot{y}=\Lambda y,\qquad\Lambda=\mathrm{diag}\left(\lambda_{1},\ldots,\lambda_{N}\right),\label{eq:diagonalized linearization}
\end{equation}
where $y\in\mathbb{C}^{N}$ is a complex vector, with its $j^{th}$
coordinate $y_{j}$ denoting a coordinate along the (generally complex)
eigenvector $e_{j}$ of $A$. Complexified equivalents of all real
eigenspaces $E_{j}$ and spectral subspaces $E_{j_{1},\ldots,j_{q}}$
are again invariant subspaces for the linearized dynamics \eqref{eq:diagonalized linearization}.
As invariant manifolds, not only are all these subspaces infinitely
many times differentiable but also analytic. Indeed, their coordinate
representations are given by the analytic graphs $y_{l}=f_{l}(y_{j_{1}},\ldots,y_{j_{q}})\equiv0$,
for all $l\notin\left\{ j_{1},\ldots,j_{q}\right\} $, over any spectral
subspace $E_{j_{1},\ldots,j_{q}}$. 

There are, however, generally infinitely many other invariant manifolds
in the linearized system \eqref{eq:diagonalized linearization} that
are also graphs over $E_{j_{1},\ldots,j_{q}}$ and are tangent $E_{j_{1},\ldots,j_{q}}$
at the origin. Indeed, as we show in Appendix \ref{sub:Uniqueness-and-analyticity},
along \emph{any} codimension-one surface $\Gamma\subset E_{j_{1},\ldots,j_{q}}$,
intersected transversely by the linear vector field \eqref{eq:diagonalized linearization}
within $E_{j_{1},\ldots,j_{q}}$, we can prescribe the $y_{l}$ coordinates
of an invariant manifold via arbitrary smooth functions $y_{l}\vert_{\Gamma}=f_{l}^{0}(\Gamma)$
with $l\notin\left\{ j_{1},\ldots,j_{q}\right\} $, and obtain (under
non-resonance conditions) a unique manifold satisfying this boundary
condition. For two-dimensional systems, this arbitrariness in the
boundary conditions leads to a one-parameter family of invariant surfaces
(see. Fig. \ref{fig:nonuniqueness}a). In the multi-dimensional case,
illustrated in Fig. \ref{fig:nonuniqueness}b, there is a substantially
higher degree of non-uniqueness for invariant manifolds tangent to
individual spectral subspaces. Indeed, both the choice of the codimension-one
boundary surface $\Gamma$ and the choice of the boundary values $f_{l}^{0}(\Gamma)$
of the invariant manifold are arbitrary, as long as $\Gamma$ is transverse
to the linear vector field.

\begin{figure}[H]
\centering{}\includegraphics[width=0.7\textwidth]{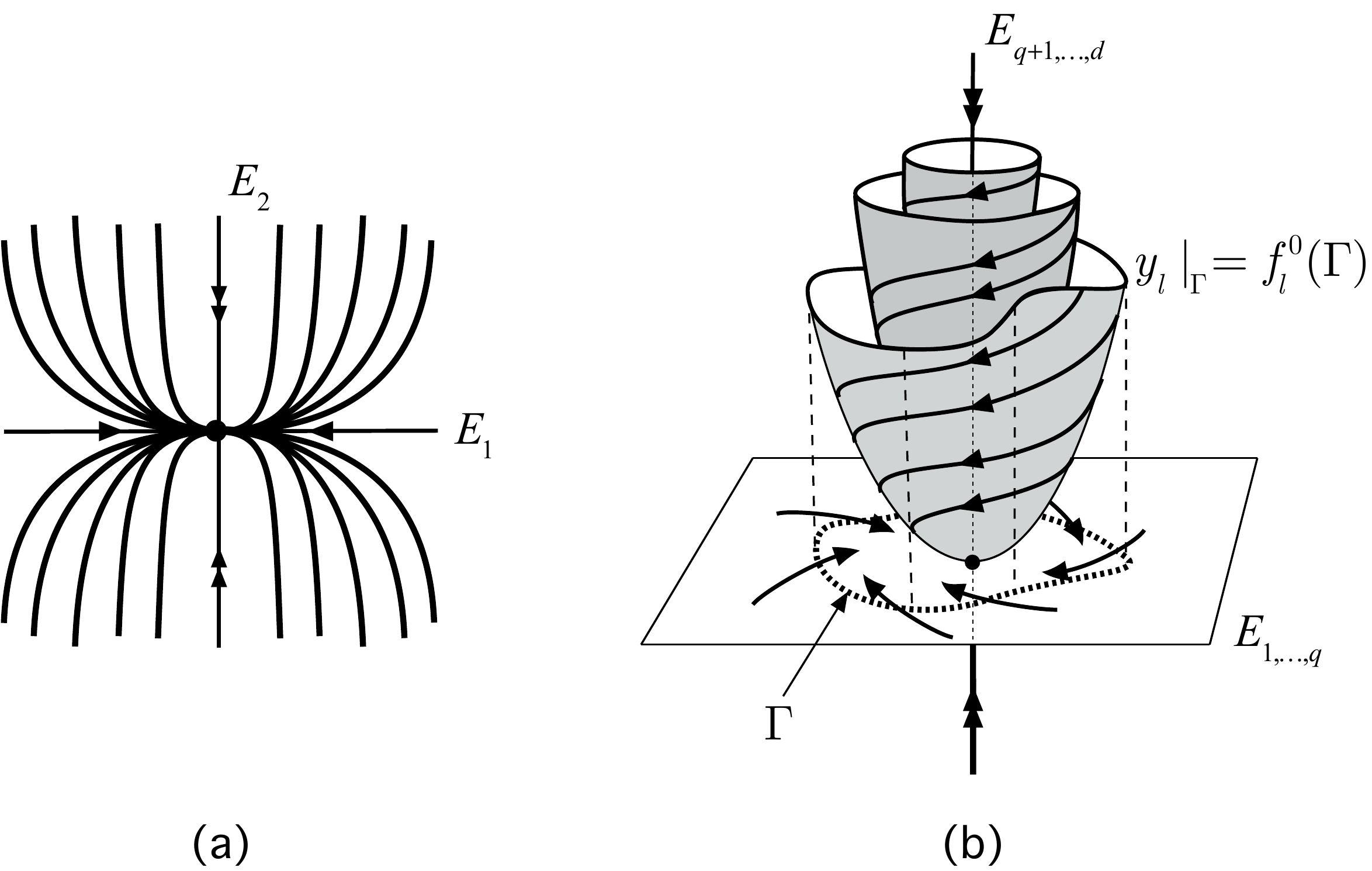}\caption{(a) Non-uniqueness of invariant manifolds tangent to the slower-decaying
spectral subspace of a planar, linear dynamical system. Note the uniqueness
of the invariant manifold tangent to the faster-decaying spectral
subspace (b) Non-uniqueness of invariant manifolds tangent to the
direct product $E_{1,\ldots,q}$ of $q$ slowest-decaying spectral
subspaces of a higher-dimensional, linear dynamical system.\emph{
}Under appropriate nonresonance conditions (cf. Appendix \ref{sub:Uniqueness-and-analyticity}),
any codimension-one boundary surface $\Gamma$ transverse to the flow
within $E_{1,\ldots,q}$ yields an invariant manifold tangent to $E_{1,\ldots,q}$
at the fixed point, for \emph{any} choice of the smooth functions
$y_{l}=f_{l}^{0}(\Gamma),$ with $l\protect\notin\left\{ j_{1},\ldots,j_{q}\right\} $.
Again, note the uniqueness of the invariant manifold tangent to the
spectral subspace of the remaining faster-decaying modes.\label{fig:nonuniqueness}}
\end{figure}

A subset of these infinitely many solutions is simple to write down
in the case of underdamped mechanical vibrations whereby we have $\mathrm{Im}\lambda_{j}\neq0$
for all eigenvalues. Passing to amplitude-phase variables $(r_{j},\varphi_{j})$
by letting $(y_{j},\bar{y}_{j})\equiv(y_{j},y_{j+1})=r_{j}e^{i\varphi_{j}}$,
we can re-write system \eqref{eq:diagonalized linearization} in the
simple amplitude-phase form
\[
\dot{r}_{j}=-\mathrm{Re}\lambda_{j}\,r,\quad\dot{\varphi}_{j}=\mathrm{Im}\lambda_{j},\quad j=1,\ldots,n=N/2,
\]
with $n$ denoting the number of degrees of freedom in the system
\eqref{eq:system}. In this case, a family of invariant manifolds
tangent to the spectral subspace $E_{j_{1},\ldots,j_{q}}$ is given
explicitly by the equations 
\begin{eqnarray}
r_{l} & = & f_{r_{l}}(r_{j_{1}},\varphi_{j_{1}},\ldots,r_{j_{q}},\varphi_{j_{q}}):=\sum_{i=1}^{q}C_{l}^{j_{i}}r_{j_{i}}^{\frac{\mathrm{Re}\lambda_{l}}{\mathrm{Re}\lambda_{j_{i}}}},\nonumber \\
\phi_{l} & = & f_{\varphi_{l}}(r_{j_{1}},\varphi_{j_{1}},\ldots,r_{j_{q}},\varphi_{j_{q}}):=D_{l}^{j_{i}}+\frac{\mathrm{Im}\lambda_{l}}{\mathrm{Im}\lambda_{j_{1}}}\varphi_{j_{1}},\label{eq:flatsolution}
\end{eqnarray}
for all $l\notin\left\{ j_{1},\ldots,j_{q}\right\} $, with $C_{l}^{j_{i}}\in\mathbb{R}$
and $D_{l}^{j_{i}}\in[0,2\pi)$ denoting arbitrary constants. Under
the nonresonance conditions $\lambda_{l}/\lambda_{j_{i}}\not\in\mathbb{N}^{+}$
, if
\begin{equation}
\mathrm{Re}\lambda_{l}<\mathrm{Re}\lambda_{j_{i}}<0,\quad i=1,\ldots,q,\quad l\notin\left\{ j_{1},\ldots,j_{q}\right\} \label{eq:dominance}
\end{equation}
holds, then any nonzero solution \eqref{eq:flatsolution} has only
finitely many continuous derivatives at the origin. The only exceptions
are the identically zero solutions for which $C_{l}^{j_{i}}=0$ holds
for all $j_{i}$ and $l$ values, giving $f_{r_{l}}(r_{j_{1}},\varphi_{j_{1}},\ldots,r_{j_{q}},\varphi_{j_{q}})\equiv0$.
These zero solutions are, in fact the unique smoothest $(C^{\infty}$
and even $C^{a}$) member of the solution family \eqref{eq:flatsolution},
representing the invariant spectral subspace $E_{j_{1},\ldots,j_{q}}$
itself. 

Condition \eqref{eq:dominance}, however, never holds in the case
of $E_{j_{1},\ldots,j_{q}}=E_{N-q+1,\ldots,N}$, i.e., when the invariant
manifold is sought as a graph over the spectral subspace of the $q$
fastest decaying modes. In this case, $\mathrm{Re}\lambda_{j_{i}}<\mathrm{Re}\lambda_{l}<0$
hold for all indices involved, and the only differentiable member
of the solution family \eqref{eq:flatsolution} at the origin is $f_{l}(y_{N-q+1},\ldots,y_{N})\equiv0.$
This is unique differentiable invariant manifold over $E_{N-q+1,\ldots,N}$
also happens to be analytic. The uniqueness of $E_{N-q+1,\ldots,N}$
as a smooth invariant manifold with the prescribed tangency property
does not just hold within the special solution family \eqref{eq:flatsolution}.
Indeed, the classic strong stable manifold theorem (see, e.g., Hirsch,
Pugh and Shub \cite{hirsch77}) applied to the linear system \eqref{eq:diagonalized linearization}
implies uniqueness for $E_{N-q+1,\ldots,N}$ among \emph{all} invariant
manifolds tangent to $E_{N-q+1,\ldots,N}$ at the origin. This uniqueness
of fast invariant manifolds is also illustrated in Fig. \ref{fig:nonuniqueness}a
for the two-dimensional case, and in Fig. \ref{fig:nonuniqueness}b
for the multi-dimensional case.

In summary, under appropriate nonresonance assumptions on the eigenvalues,
there are infinitely many Shaw--Pierre-type invariant manifolds tangent
to any non-fast spectral subspace $E_{j_{1},\ldots,j_{q}}$ at the
origin of the linearized system \eqref{eq:linearization}. Clearly,
one cannot expect such manifolds to be unique in the nonlinear context
studied by Shaw and Pierre \cite{shaw93} either. Thus, the common
assumption in the nonlinear normal modes literature, that invariant
manifolds tangent to eigenspaces will uniquely emerge from approximate
operational procedures, is generally unjustified. 

Observe, however, that despite the non-uniqueness of invariant manifolds
tangent to a non-fast spectral subspace $E_{j_{1},\ldots,j_{q}}$
at the origin of the linear system \eqref{eq:diagonalized linearization},
the flat boundary condition $y_{l}=f_{l}(y_{j_{1}},\ldots,y_{j_{q}})\equiv0$,
with $l\notin\left\{ j_{1},\ldots,j_{q}\right\} $, yields the unique
\emph{analytic invariant manifold}, $E_{j_{1},\ldots,j_{q}}$, provided
that the nonresonance conditions $\lambda_{p}/\lambda_{j_{i}}\not\in\mathbb{N^{+}}$
hold (see Appendix \ref{sub:Uniqueness-and-analyticity} for details.)
This gives hope that perhaps there is a unique analytic (or at least
a unique smoothest) continuation of spectral subspaces of the linearized
system to locally smoothest manifolds in the nonlinear system \eqref{eq:1storder_system}
near the origin. As we show in later sections, this expectation turns
out to be justified under certain conditions.

\subsection{Spectral quotients}

As we observed above, nontrivial solutions of the form \eqref{eq:flatsolution}
have only a finite number of continuous derivatives at the origin.
Namely, if the graph is constructed over the spectral subspace $E_{j_{1},\ldots,j_{q}}$,
then only $\mathrm{Int}\left[\mathrm{Re}\lambda_{l}/\mathrm{Re}\lambda_{j_{i}}\right]$
continuous derivatives exist for the $r_{l}$ coordinate function,
with $\mathrm{Int}\left[\,\cdot\,\right]$ denoting the integer part
of a real number. 

The smoothest non-flat invariant graphs in the family \eqref{eq:flatsolution},
therefore, satisfy 
\begin{eqnarray*}
r_{L} & = & C_{L}^{j_{I}}r_{j_{I}}^{\frac{\mathrm{Re}\lambda_{L}}{\mathrm{Re}\lambda_{j_{I}}}},\quad L=\arg\max_{l\notin\left\{ j_{1},\ldots,j_{q}\right\} }\left|\mathrm{Re}\lambda_{L}\right|,\quad I=\arg\min_{i\in\left\{ 1,\ldots,q\right\} }\left|\mathrm{Re}\lambda_{j_{i}}\right|,\\
r_{l} & \equiv & 0,\quad l\neq L,
\end{eqnarray*}
with their degree of smoothness at the origin equal to $\mathrm{Int}\left[\mathrm{Re}_{L}/\mathrm{Re}\lambda_{j_{I}}\right]$.
This is the maximal degree of smoothness that any non-flat member
of the solution family \eqref{eq:flatsolution} can attain. The only
smoother invariant graph over $E_{j_{1},\ldots,j_{q}}$ in the graph
family \eqref{eq:flatsolution} is the subspace $E_{j_{1},\ldots,j_{q}}$
itself. 

This maximal smoothness of the invariant graphs \eqref{eq:flatsolution}
is purely determined by the ratio of the fastest decay exponent outside
$E_{j_{1},\ldots,j_{q}}$ to the slowest decay exponent within $E_{j_{1},\ldots,j_{q}}$.
For later purposes, we now give a formal definition of the integer
part of this ratio for any spectral subspace $E$ of the operator
$A$. We also define another version of the same quotient, with the
numerator replaced by the fastest decay exponent in the whole spectrum
of $A$. Our notation for the full spectrum of $A$ is $\mathrm{Spect(}A),$
whereas we denote the spectrum of the restriction of $A$ to its spectral
subspace $E$ by $\mathrm{Spect(}A\vert_{E}).$
\begin{defn}
\label{def:spectral quotients}For any spectral subspace of the linear
operator$A$, we define the \textbf{relative spectral quotient} $\sigma(E)$
and the \textbf{absolute spectral quotient} $\Sigma(E)$ as 
\begin{eqnarray}
\sigma(E) & = & \mathrm{Int}\,\left[\frac{\min_{\lambda\in\mathrm{Spect(}A)-\mathrm{Spect(}A\vert_{E})}\mathrm{Re}\lambda}{\max_{\lambda\in\mathrm{Spect(}A\vert_{E})}\mathrm{Re}\lambda}\right],\label{eq:relative_sigma}\\
\Sigma(E) & =\mathrm{Int}\, & \left[\frac{\min_{\lambda\in\mathrm{Spect(}A)}\mathrm{Re}\lambda}{\max_{\lambda\in\mathrm{Spect(}A\vert_{E})}\mathrm{Re}\lambda}\right].\label{eq:absolute_sigma}
\end{eqnarray}
These spectral quotients will play a major role in later sections
when we discuss the existence and uniqueness of nonlinear continuations
of invariant manifolds of the linearized system. 
\end{defn}

\section{Nonlinear spectral geometry: Nonlinear normal modes and spectral
submanifolds}

The fundamental assumption of nonlinear modal analysis is that appropriate
generalizations of invariant manifolds of the linearized system persist
under the full system \eqref{eq:1storder_system} (see, e.g., Vakakis
\cite{vakakis01}, Kerschen et al. \cite{kerschen09}, Peeters et
al. \cite{peters09}, and Avramov and Mikhlin \cite{avramov10,avramov13}
for reviews). 

The classic definition of Rosenberg \cite{rosenberg62} for autonomous,
conservative systems states that nonlinear normal modes are synchronous
periodic orbits, i.e., periodic motions that reach their extrema along
all modal coordinate directions at the same time. A useful relaxation
of this concept allows for general (not necessarily synchronous) periodic
orbits in autonomous systems (see, e.g., Peeters et al. \cite{peters09}).

Here we relax Rosenberg's definition even further for general dissipative
systems, allowing a nonlinear normal to be a recurrent motion with
a discrete Fourier spectrum of $f$ frequencies.\footnote{Recurrent motions are typical in conservative systems with compact
energy surfaces. Thus, recurrence by itself can only distinguish nonlinear
normal modes in dissipative systems.} If $f>1$ and the frequencies of the motion are rationally independent,
then the motion is quasi-periodic and forms a non-compact set in the
phase space. To this end, we use the closure of such a trajectory
in our normal mode definition (with the closure including the trajectory
as well as all its limit points). Specifically, the closure of a periodic
orbit is just the periodic orbit itself, while the closure of a quasiperiodic
orbit contains further points outside the trajectory, forming an invariant
torus densely filled by the trajectory.
\begin{defn}
\emph{\label{def:NNM}A nonlinear normal mode (NNM) }is the closure
of a multi-frequency solution
\[
x(t)=\sum_{\left|m\right|=1}^{\infty}x_{m}e^{i\left\langle m,\Omega\right\rangle t},\qquad m\in\mathbb{N}^{f},\quad\Omega\in\mathbb{R}^{f},
\]
of the nonlinear system \eqref{eq:1storder_system}. Here $f\in\mathbb{N}$
is the number of frequencies; the vector $m$ is a multi-index of
$f$ nonnegative integers; $x_{m}\in\mathbb{C}^{n}$ are the complex
Fourier amplitudes of the real solution $x(t)$ with respect to the
frequencies in the frequency vector $\Omega=\left(\Omega_{1},\ldots,\Omega_{f}\right)$.
Special cases of NNMs include (see Fig. \ref{fig:NNMs=000026SSMs}):\end{defn}
\begin{description}
\item [{\emph{(1)}}] \emph{trivial NNM ($f=0$)}: a fixed point 
\item [{\emph{(2)}}] \emph{periodic NNM }(either $f=1$, or $f>1$ and
the elements of $\Omega$ are rationally commensurate): a periodic
orbit
\item [{(3)}] \emph{quasiperiodic NNM} ($f>1$ and the elements of $\Omega$
are rationally incommensurate): an $f$-dimensional invariant torus
\end{description}

\begin{figure}[H]
\begin{centering}
\includegraphics[width=0.8\textwidth]{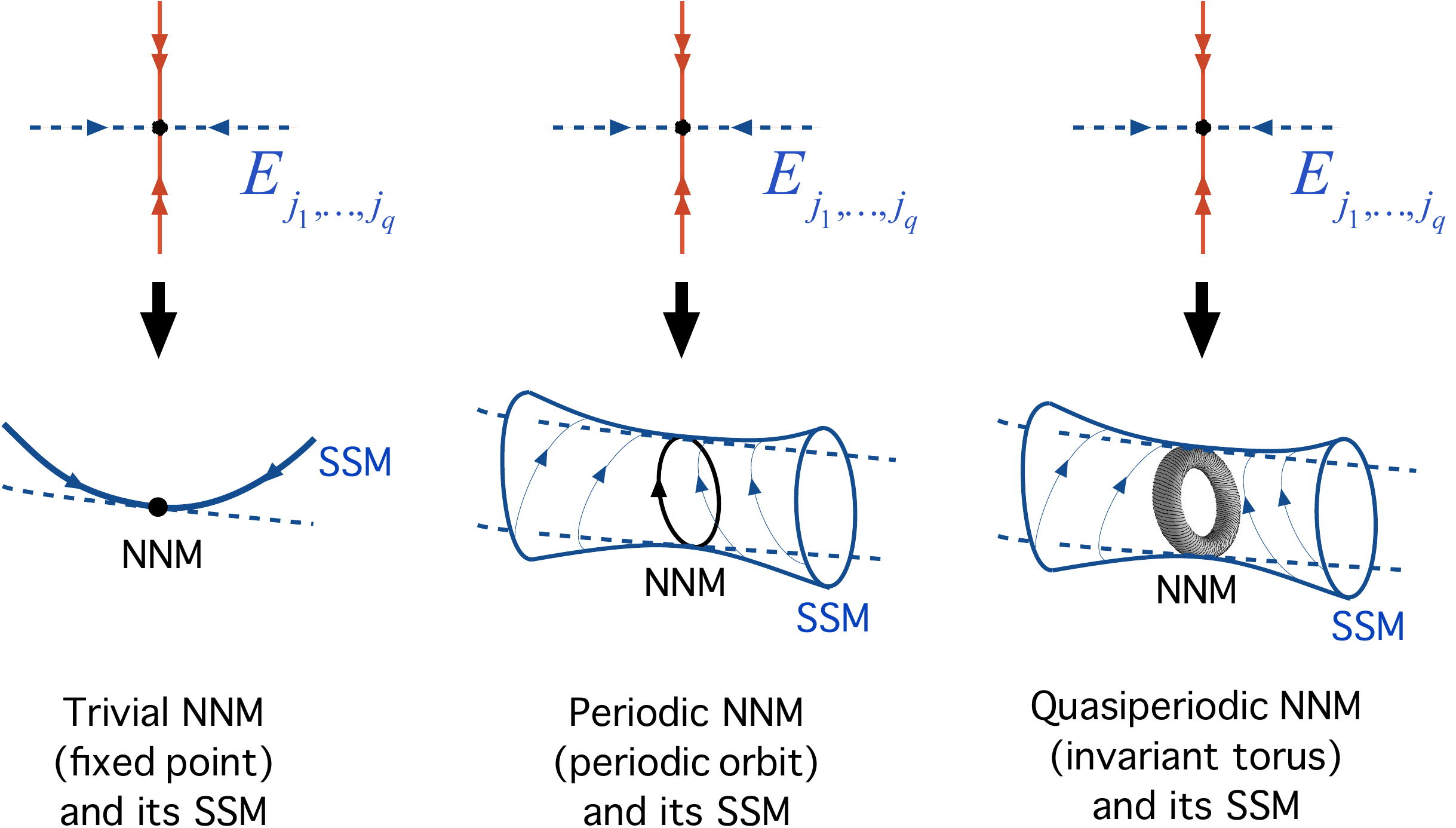}
\par\end{centering}

\caption{Schematics of the three main types of NNMs (trivial, periodic and
quasiperiodic) and their corresponding SSMs (autonomous, periodic
and quasiperiodic). In all cases, the NNM are, or are born out of,
perturbations of a fixed point. The SSMs are always tangent to a sub-bundle
along the NNM whose fibers are close to a specific spectral subspace
$E_{j_{1},\ldots,j_{q}}$ of the linearized system.\label{fig:NNMs=000026SSMs}}

\end{figure}

A further expectation in the nonlinear vibrations literature--put
forward first by Shaw and Pierre \cite{shaw93} in its simplest form,
then extended by Pescheck et al. \cite{peschek01}, Shaw, Peschek
and Pierre\cite{shaw99}, Jiang, Pierre and Shaw \cite{jiang05}--is
that an arbitrary spectral subspace $E_{j_{1},\ldots,j_{q}}$ of the
$x=0$ fixed point will also persist under the addition of nonlinear
and time-dependent terms in system \eqref{eq:invariant subspaces}.
This would lead to a nonlinear continuation of the spectral subspace
$E_{j_{1},\ldots,j_{q}}$ into an invariant manifold $W_{j_{1},\ldots,j_{q}}(\mathcal{\mathcal{N}})$
along $\mathcal{N}.$ While Shaw and Pierre \cite{shaw93} calls such
a $W_{j_{1},\ldots,j_{q}}(\mathcal{\mathcal{N}})$ a nonlinear normal
mode, the dynamics in $W_{j_{1},\ldots,j_{q}}(\mathcal{\mathcal{N}})$
will not inherit the forward- and backward-bounded, recurrent nature
of linear normal modes even in the simplest dissipative examples.
To make this distinction from classic normal modes clear, we refer
here to $W_{j_{1},\ldots,j_{q}}(\mathcal{\mathcal{N}})$ as a spectral
submanifold. 
\begin{defn}
\emph{\label{def:SSM}A spectral submanifold (SSM)} of a NNM, $\mathcal{\mathcal{N},}$
is an invariant manifold $W(\mathcal{\mathcal{N}})$ of system \eqref{eq:1storder_system}
such that\end{defn}
\begin{description}
\item [{(i)}] $W(\mathcal{\mathcal{N}})$ is a subbundle of the normal
bundle $N\mathcal{N}$ of $\mathcal{\mathcal{N}}$, satisfying $\dim W(\mathcal{\mathcal{N}})=\dim E+\dim\mathcal{\mathcal{N}}$
for some spectral subspace $E$ of the operator $A$. 
\item [{(ii)}] The fibers of the bundle $W(\mathcal{\mathcal{N}})$ perturb
smoothly from the spectral subspace $E$ of the linearized system
\eqref{eq:linearization} under the addition of the nonlinear and
$\mathcal{O}(\epsilon)$ terms in system \eqref{eq:1storder_system}.
\item [{(iii)}] $W(\mathcal{\mathcal{N}})$ has strictly more continuous
derivatives along $\mathcal{\mathcal{N}}$ than any other invariant
manifold satisfying (i) and (ii).
\end{description}
More specifically, in the case of zero external forcing ($\epsilon=0$),
an SSM is the smoothest invariant manifold $W(0)$ out of all invariant
manifolds that are tangent to a spectral submanifold $E$ at $x=0$
and have the same dimension as $E$. In the case of nonzero external
forcing $(\epsilon\neq0)$, an SSM is the smoothest invariant manifold
$W(\mathcal{N})$ out of all invariant manifolds that are $\mathcal{O}(\epsilon$)
$C^{1}$-close to the set $\mathcal{N}\times E$ along $\mathcal{N}$
and have the same dimension as $\mathcal{N}\times E$ does.

To be clear, there is no a priori guarantee that a unique smoothest
member in a family of surfaces satisfying (i) and (ii) of Definition
\ref{def:SSM} actually exists. Indeed, no smooth surface might exist,
or those that exist may be equally smooth. We will need to derive
conditions under which SSMs are unique and hence well-defined in the
sense of Definition \ref{def:SSM}.

Special cases of SSMs include \textbf{(}see Fig. \ref{fig:NNMs=000026SSMs}):

\emph{(1) autonomous SSM ($f=0$)}: nonlinear continuations of spectral
submanifolds discussed for linear systems in Section \ref{sub:Spectral-submanifolds-in-linear-systems}.

\emph{(2) periodic SSM }(either $f=1$, or $f>1$ and the elements
of $\Omega$ are rationally commensurate): a three-dimensional invariant
manifold tangent to a spectral subbundle along a hyperbolic periodic
orbit

(3) \emph{quasiperiodic SSM} ($f>1$ and the elements of $\Omega$
are rationally incommensurate): an invariant manifold tangent to a
spectral subbundle of a hyperbolic invariant torus.

Classic examples of autonomous SSMs include the stable manifold $W^{s}(\mathcal{N})$
and the unstable manifold $W^{u}(\mathcal{N})$ of a fixed point $\mathcal{N}$
(i.e., of a trivial NNM). Classic examples of non-autonomous SSMs
include the stable manifold $W^{s}(\mathcal{N})$ and the unstable
manifold $W^{u}(\mathcal{N})$ of a periodic or quasiperiodic orbit
$\mathcal{N}$. The SSMs of interest here are submanifolds of $W^{s}(\mathcal{N})$
that perturb smoothly from spectral subspaces within $\mathcal{N}\times E^{s}$.
The construction of these surfaces has been the main question in the
nonlinear modal analysis of autonomous and non-autonomous systems,
to be discussed in detail in our Theorems \ref{theo:SSM unforced}
and \ref{theo:SSM forced} below.

There is a clear geometric distinction between our NNM definition
(a generalization of the normal mode concept of Rosenberg) and our
SSM definition (a generalization of the normal-mode concept of Shaw
and Pierre, with the highest smoothness requirement added). Both concepts
are helpful, but refer to highly different dynamical structures in
dissipative dynamical systems.

\section{Existence and uniqueness of NNMs\label{sec:NNM results}}

As mentioned before, the survival of the trivial NNMs in the form
of a nearby perturbed solution in system \eqref{eq:1storder_system}
is broadly expected in the nonlinear normal modes literature. These
perturbed NNMs are routinely sought via formal asymptotic expansions
with various a priori postulated time scales (see, e.g., Nayfeh \cite{nayfeh04}
for a survey of such intuitive methods). There is generally limited
concern for the the validity of these formal approximations (see Verhulst
\cite{verhulst15} for a discussion). Formal computability of the
first few terms of the assumed asymptotic expansion for NNMs, however,
does not imply that the targeted structure actually exists, as we
discussed in the Introduction. 

Here, we would like to fill this conceptual gap by clarifying the
existence and uniqueness of NNMs using classical invariant manifold
theory. The same theory also allows us to conclude the existence of
a special SSM, the stable manifold of the NNM. Here we only consider
damped mechanical vibrations for which 
\begin{equation}
\mathrm{Re}\lambda_{j}<0,\quad j=1,\ldots,N\label{eq:negative spectrum}
\end{equation}
holds in the linearized system\eqref{eq:linearization}. This assumption
ensures that we are in the dissipative setting in which our NNM and
SSM definitions are meaningful.

\subsection{Trivial NNM under autonomous external forcing ($k=0$)}

For time-independent external forcing, \eqref{eq:1storder_system}
remains autonomous even under the inclusion of the remaining $\mathcal{O}(\epsilon)$
forcing terms. Because these autonomous forcing terms are not assumed
to vanish at $x=0,$ the full system will generally no longer have
a fixed point at $x=0.$ The following theorem nevertheless guarantees
the existence of a nearby trivial NNM with spectral properties mimicking
that of the origin. 
\begin{thm}
\label{theo:unforced NNM}{[}\textbf{Existence, uniqueness and persistence
of autonomous NNMs}{]} Assume that the external forcing is autonomous
($k=0$) in \eqref{eq:1storder_system}. Assume further that \eqref{eq:negative spectrum}
holds for the eigenvalues of the matrix $A$. 

Then, for $\epsilon\neq0$ small enough, there exists a unique, trivial
NNM, $x_{\epsilon}=\tau(\epsilon)$, with $\tau(0)=0$, in system
\eqref{eq:1storder_system}. This NNM attracts all nearby trajectories
and depends on $\epsilon$ in a $C^{r}$ fashion. \end{thm}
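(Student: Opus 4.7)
Since $k=0$ the system \eqref{eq:1storder_system} reduces to the autonomous ODE $\dot x = G(x,\epsilon)$ with $G(x,\epsilon) := Ax + f_0(x) + \epsilon f_1(x;\epsilon)$, and a trivial NNM (the $f=0$ case of Definition \ref{def:NNM}) is just a fixed point. My plan is therefore to locate the branch of fixed points emanating from $(x,\epsilon)=(0,0)$ via the implicit function theorem and then transfer the linear stability of the origin to this branch by spectral continuity.

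First I would verify the hypotheses of the implicit function theorem for $G(x,\epsilon)=0$ at $(0,0)$. We have $G(0,0)=0$ because $f_0(x)=\mathcal{O}(|x|^2)$. The partial derivative $\partial_x G(0,0)=A$ is invertible: by assumption \eqref{eq:negative spectrum} every eigenvalue of $A$ has strictly negative real part, so $0\notin\mathrm{Spect}(A)$. The regularity of $G$ is inherited from $f_0$ and $f_1$, which are of class $C^r$ with $r\in\mathbb{N}^+\cup\{\infty,a\}$. Applying the $C^r$ implicit function theorem (and, in the analytic case $r=a$, its analytic counterpart, e.g.\ as in the standard references on analytic maps in Banach spaces) yields a unique $C^r$ curve $\tau\colon(-\epsilon_0,\epsilon_0)\to\mathbb{R}^N$ with $\tau(0)=0$ and $G(\tau(\epsilon),\epsilon)\equiv 0$ for $|\epsilon|<\epsilon_0$. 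Uniqueness is obtained in a sufficiently small neighborhood of the origin in $\mathbb{R}^N\times\mathbb{R}$, which is exactly the local uniqueness claimed for the trivial NNM.

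For the stability assertion, I would invoke continuity of the spectrum. The linearization of the vector field along the branch is $\partial_x G(\tau(\epsilon),\epsilon)=A+\partial_x f_0(\tau(\epsilon))+\epsilon\,\partial_x f_1(\tau(\epsilon);\epsilon)$, which depends continuously on $\epsilon$ and coincides with $A$ at $\epsilon=0$. Since $\mathrm{Spect}(A)$ lies strictly in the open left half-plane and eigenvalues depend continuously on matrix entries, for $\epsilon$ sufficiently small the perturbed spectrum also lies in the open left half-plane. Hence $\tau(\epsilon)$ is a hyperbolic attracting fixed point, and by the Hartman--Grobman theorem (or simply by the linear stability theorem for ODEs) it attracts all nearby trajectories exponentially, with the attraction basin containing a fixed neighborhood uniformly in $\epsilon$ for small $|\epsilon|$.

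The only genuinely delicate point is the case $r=a$, where one must ensure that the implicit function branch is actually analytic rather than merely smooth; this follows from the analytic implicit function theorem applied to the complexification of $G$, using that the hypothesis $f_0,f_1\in C^a$ provides a holomorphic extension to a neighborhood of $(0,0)$ in $\mathbb{C}^N\times\mathbb{C}$. All other steps are standard once the invertibility of $A$ is in hand, so I do not expect any essential obstacle: the theorem is really a packaging of the persistence of a hyperbolic fixed point under small autonomous perturbations, cast in the NNM terminology of Definition \ref{def:NNM}.
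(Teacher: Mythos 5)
Your proposal is correct and follows essentially the same route as the paper: the paper likewise obtains the persisting fixed point from the implicit function theorem (using that \eqref{eq:negative spectrum} excludes zero eigenvalues, so $A$ is invertible) and deduces attractivity from the classical stable manifold theorem, which is equivalent to your spectral-continuity plus linearized-stability argument. Your additional remarks on the analytic case $r=a$ are a welcome elaboration of a point the paper leaves implicit, but they do not change the substance of the argument.
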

\begin{proof}
Since no zero eigenvalues are allowed for the linearized system, a
unique, smoothly persisting fixed point (trivial NNM) will persist
for small enough $\epsilon$ by the implicit function theorem. This
persisting fixed point will be attracting by the classic stable manifold
theorem applied to system \eqref{eq:1storder_system}, as described,
e.g., in Guckenheimer and Holmes \cite{guckenheimer83}. 
\end{proof}

\subsection{Periodic and quasiperiodic NNM under non-autonomous external forcing
($k\geq1$)\label{sub:Periodic-and-quasiperiodic-NNM-under-non-autonomous-forcing}}

The existence of a small-amplitude periodic solution under purely
periodic forcing in system \eqref{eq:1storder_system} is also routinely
assumed in the nonlinear vibrations literature. These solutions are
then sought via numerical continuation or finite Fourier expansions.
Conditions guaranteeing the success of these formal procedures are
generally omitted.

Next we deduce general mathematical conditions for system \eqref{eq:1storder_system}
under which the existence, uniqueness and even the stability type
of a nontrivial NNM follows under general quasiperiodic forcing, including
the case of periodic forcing ($k=1$).
\begin{thm}
\label{theo:forced NNM}{[}\textbf{Existence, uniqueness and persistence
of non-autonomous NNMs}{]} Assume that the external forcing $f_{1}$
is quasi-periodic with $k\geq1$ frequencies, and the eigenvalues
of the matrix $A$ satisfy \eqref{eq:negative spectrum}. 

Then, for $\epsilon\neq0$ small enough, there exists a unique NNM,
$x_{\epsilon}(t)=\epsilon\tau(\Omega_{1}t,\ldots,\Omega_{k}t;\epsilon)$
in the system \eqref{eq:1storder_system}, where the function $\tau$
is $2\pi$-periodic in each of its first $k$ arguments. This NNM
attracts all nearby trajectories and depends on $\epsilon$ in a $C^{r}$
fashion. \end{thm}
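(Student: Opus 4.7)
The natural plan is to lift the non-autonomous system to an autonomous one on the extended phase space $\mathcal{U}\times\mathbb{T}^{k}$ and then invoke a persistence theorem for normally hyperbolic invariant manifolds. Introduce the phase variables $\phi=(\phi_{1},\ldots,\phi_{k})\in\mathbb{T}^{k}$ with $\phi_{i}=\Omega_{i}t$, so that \eqref{eq:1storder_system} becomes the autonomous system
\begin{equation*}
\dot{x}=Ax+f_{0}(x)+\epsilon f_{1}(x,\phi;\epsilon),\qquad \dot{\phi}=\Omega.
\end{equation*}
At $\epsilon=0$, the set $\mathcal{N}_{0}=\{0\}\times\mathbb{T}^{k}$ is a compact invariant torus: it is fixed in the $x$-direction and carries the linear flow $\phi\mapsto\phi+\Omega t$ tangentially. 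The normal linearized dynamics along $\mathcal{N}_{0}$ is governed by $A$, whose spectrum satisfies \eqref{eq:negative spectrum} uniformly, while the tangential Lyapunov exponents vanish. Hence $\mathcal{N}_{0}$ is normally hyperbolic with a one-sided (attracting) rate gap.

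The first main step is to apply Fenichel's persistence theorem \cite{fenichel71} (in its version for compact normally hyperbolic invariant manifolds with boundary or, here more conveniently, for compact invariant manifolds without boundary). This yields, for all sufficiently small $|\epsilon|$, a unique nearby compact $C^{r}$ invariant manifold $\mathcal{N}_{\epsilon}$, $C^{r}$-diffeomorphic to $\mathbb{T}^{k}$, $C^{r}$-close to $\mathcal{N}_{0}$, and depending $C^{r}$ on $\epsilon$. Because $\mathcal{N}_{\epsilon}$ lies in a tubular neighborhood of $\mathcal{N}_{0}$, it can be written as the graph of a $C^{r}$ function $\tilde{\tau}:\mathbb{T}^{k}\times(-\epsilon_{0},\epsilon_{0})\to\mathbb{R}^{N}$, $\phi\mapsto\tilde{\tau}(\phi;\epsilon)$, which is $2\pi$-periodic in each component of $\phi$ and satisfies $\tilde{\tau}(\phi;0)\equiv0$. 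Since $\tilde{\tau}$ vanishes identically at $\epsilon=0$ and is $C^{r}$ in $\epsilon$, it factors as $\tilde{\tau}(\phi;\epsilon)=\epsilon\tau(\phi;\epsilon)$ with $\tau$ of class $C^{r-1}$ (or $C^{r}$ if one uses Hadamard's lemma together with the smooth $\epsilon$-dependence); restricting the trajectory on $\mathcal{N}_{\epsilon}$ to the time parametrization $\phi_{i}=\Omega_{i}t$ then produces the desired NNM of the form $x_{\epsilon}(t)=\epsilon\tau(\Omega_{1}t,\ldots,\Omega_{k}t;\epsilon)$. Uniqueness of $\mathcal{N}_{\epsilon}$ in a $C^{1}$-neighborhood follows from the standard uniqueness clause of Fenichel's theorem for compact normally hyperbolic invariant manifolds.

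Finally, the attraction claim follows from the same theorem: because the normal spectrum at $\mathcal{N}_{0}$ is strictly negative, $\mathcal{N}_{0}$ has a local stable manifold that coincides with a full neighborhood of $\mathcal{N}_{0}$ in $\mathcal{U}\times\mathbb{T}^{k}$, and this property persists for $\mathcal{N}_{\epsilon}$. Consequently, every trajectory of the extended system starting sufficiently close to $\mathcal{N}_{\epsilon}$ converges to it exponentially, which translates back into attraction of all nearby trajectories of \eqref{eq:1storder_system} toward $x_{\epsilon}(t)$. The classification in Definition \ref{def:NNM} then places $x_{\epsilon}$ in the periodic or quasiperiodic NNM category according to whether the components of $\Omega$ are rationally commensurate or not.

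The main technical obstacle I expect is not the existence/uniqueness/attraction conclusion itself — once normal hyperbolicity is identified, Fenichel's machinery delivers everything almost verbatim — but rather the bookkeeping needed to extract the precise scaling $\tilde{\tau}=\epsilon\tau$ with the advertised regularity. A cleaner alternative I would use if the Hadamard-type division costs a derivative is to change variables $x=\epsilon u$ from the outset: this transforms the problem into $\dot{u}=Au+f_{1}(0,\phi;0)+\mathcal{O}(\epsilon)$ coupled with $\dot{\phi}=\Omega$, in which the hyperbolicity of $A$ and the almost-periodicity of the forcing produce a unique bounded quasiperiodic solution $u_{0}(\phi)$ at $\epsilon=0$, and a contraction/implicit-function argument in the space of $C^{r}$ functions on $\mathbb{T}^{k}$ then yields the full $\tau(\phi;\epsilon)$ with the right regularity in one step.
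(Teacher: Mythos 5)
Your approach is essentially the paper's: lift to the autonomous extended system on $\mathcal{U}\times\mathbb{T}^{k}$, observe that $\{0\}\times\mathbb{T}^{k}$ is a compact invariant torus whose transverse contraction rates (the strictly negative $\mathrm{Re}\,\lambda_{j}$) dominate the zero tangential rates, and invoke persistence of normally hyperbolic invariant manifolds. This is exactly the argument in the paper's Appendix B, and for finite $r$ your execution (including the graph representation over $\mathbb{T}^{k}$ and the extraction of the factor $\epsilon$) is sound; your alternative via the rescaling $x=\epsilon u$ and a contraction argument in $C^{r}(\mathbb{T}^{k})$ is a reasonable way to avoid losing a derivative in the Hadamard step.

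There is, however, a gap relative to the full statement: by \eqref{eq:r-def} the theorem also covers $r=\infty$ and $r=a$, and Fenichel's theorem \cite{fenichel71} only yields $C^{r}$ persistence for each \emph{finite} $r$ (the torus here is $r$-normally hyperbolic for every integer $r\geq1$, but that does not bootstrap to $C^{\infty}$, let alone analytic, persistence or parameter dependence). The paper explicitly flags this and handles the $r\in\{\infty,a\}$ cases by appealing to the invariant-torus results of Haro and de la Llave \cite{haro06}, which are tailored to quasiperiodic systems of exactly the form \eqref{eq:extended_full_system}. Your proof as written establishes Theorem \ref{theo:forced NNM} only for $r\in\mathbb{N}^{+}$; to claim $C^{\infty}$ or analytic dependence on $\epsilon$ you need either that reference or an independent argument (your contraction-mapping alternative could in principle be upgraded to the analytic category by working in a space of bounded analytic functions on a complex neighborhood of $\mathbb{T}^{k}$, but that needs to be said and justified).
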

\begin{proof}
For $r\in\mathbb{N}^{+}$ , the theorem can be proven using classic
invariant manifold results, as detailed in Appendix B. Proving case
for $r\in\left\{ 0,\infty,a\right\} $ requires use of the existence
results of Haro and de la Llave \cite{haro06} for invariant tori
which are directly applicable here.
\end{proof}
Theorem \ref{theo:forced NNM} gives a mathematical foundation to
various formal expansion techniques (two-timing, harmonic balance,
etc) and numerical continuation techniques used in the nonlinear vibrations
literature. The existence of the NNMs and their domain of attraction
are independent of any possible resonances between the forcing frequencies
$\Omega_{j}$ and the imaginary parts of the eigenvalues of $A$.
The nature of the NNM (periodic or quasiperiodic) will depend on the
actual value of $\epsilon$, and will be captured by general multi-mode
Fourier expansions, as we describe in Section \ref{sub:Expansions-for-NNMs}.

Of relevance here is the recent work of Kuether et al. \cite{kuether15},
who call a periodic NNM (as defined in Definition \ref{def:NNM})
nonlinear forced response, to distinguish it from nonlinear normal
modes (defined as not necessarily synchronous periodic orbits of the
unforced and undamped nonlinear system). Kuether et al. \cite{kuether15}
investigate connections between NNM and forced responses via intuitive
techniques. A firm connection between the quasiperiodic or periodic
NNM and the $x=0$ equilibrium is offered by Theorem \ref{theo:forced NNM}
for small $\left|\epsilon\right|$ values. For large values of $\left|\epsilon\right|$,
such a connection no longer exists, as the local phase space structure
near the former equilibrium is drastically altered by large perturbations.

\section{Spectral submanifolds in autonomous systems ($k=0$)\label{sec:Spectral-submanifolds-in-autonomous-systems}}

In this section, we discuss spectral submanifolds in the sense of
Definition \ref{def:SSM}, i.e, smoothest nonlinear continuations
of spectral subspaces $E_{j_{1},\ldots,j_{q}}$ in the nonlinear system
\eqref{eq:1storder_system}. We assume here that $k=0$ holds, in
which case, after a possible shift of coordinates, all autonomous
terms contained in the function $f_{1}$ on the right-hand side of
system \eqref{eq:1storder_system} can be subsumed either into the
linear term $Ax$ or the autonomous nonlinear term $f_{0}(x).$ Thus,
without any loss of generality, we can write the $k=0$ case of system
\eqref{eq:1storder_system} in the form 
\begin{equation}
\dot{x}=Ax+f_{0}(x),\qquad\qquad f_{0}(x)=\mathcal{O}(\left|x\right|^{2}),\quad f_{0}\in C^{r},\label{eq:unforced}
\end{equation}
where $r$ is selected as in \eqref{eq:r-def}.

\subsection{Main result}

The idea of seeking two-dimensional spectral submanifolds in system
\eqref{eq:unforced} is originally due to Shaw and Pierre \cite{shaw93}.
They called such spectral submanifolds nonlinear normal modes, even
though these surfaces generally do not contain periodic or even recurrent
motions in the presence of damping. Shaw and Pierre \cite{shaw94}
later extended their original idea to infinite-dimensional evolutionary
equations arising in continuum oscillations. Furthermore, Pescheck
et al. \cite{peschek01} extended the original Shaw--Pierre concept
to the nonlinear continuation of an arbitrary, finite-dimensional
spectral subspace. More recent reviews of the approach and its applications
are given by Kerschen et al. \cite{kerschen09} and Avramov and Mikhlin
\cite{avramov10,avramov13}.

We restrict here the discussion to the case of a stable underlying
NNM, the context in which the Shaw--Pierre invariant manifold concept
was originally proposed. We thus assume throughout this section that
\begin{equation}
\mathrm{Re}\lambda_{j}<0,\quad j=1,\ldots,N,\label{eq:ass1}
\end{equation}
implying that the origin is an asymptotically stable fixed point.
By reversing the direction of time, we obtain similar results for
unstable NNMs (repelling fixed points) with $\mathrm{Re}\lambda_{j}>0,\quad j=1,\ldots,N.$ 

To describe appropriate nonresonance conditions for a spectral subspace
$E$, we will use linear combinations of eigenvalues associated with
a spectral subspace $E$ with nonnegative integers $m_{i}$. Specifically,
for a $q$-dimensional spectral subspace $E$, we denote such linear
combinations as 
\[
\left\langle m,\lambda\right\rangle _{E}:=m_{1}\lambda_{j_{1}}+\ldots+m_{q}\lambda_{j_{q}},\quad\;\lambda_{j_{k}}\in\mathrm{Spect}(A\vert_{E}),\quad m\in\mathrm{\mathbb{N}}^{q},\quad q=\dim E.
\]
We define the order of the nonnegative integer vector $m$ as
\[
\left|m\right|:=m_{1}+\ldots+m_{q}.
\]

\begin{thm}
\label{theo:SSM unforced}{[}\textbf{Existence, uniqueness and persistence
of autonomous SSM}{]} \emph{Consider a spectral subspace $E$ and
assume that the low-order nonresonance conditions 
\begin{equation}
\left\langle m,\lambda\right\rangle _{E}\neq\lambda_{l},\quad\lambda_{l}\not\in\mathrm{Spect}(A\vert_{E}),\quad2\leq\left|m\right|\leq\sigma(E)\label{eq:unforced_nonresonance}
\end{equation}
hold for all eigenvalues of $\lambda_{l}$ of $A$ that lie outside
the spectrum of $A\vert_{E}$. }
\end{thm}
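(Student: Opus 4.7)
The plan is to construct the SSM as the graph (or more generally, the image) of a parametrization and to invoke the parametrization method of Cabr\'e, Fontich and de la Llave, which is in fact tailored precisely for invariant manifolds tangent to an arbitrary spectral subspace of a hyperbolic fixed point. Concretely, I would seek an embedding $W\colon \mathcal{V}\subset E\to\mathbb{R}^{N}$ and a reduced vector field $R\colon \mathcal{V}\to E$ satisfying the conjugacy equation
\begin{equation*}
DW(y)\,R(y) \;=\; A\,W(y) \;+\; f_{0}\!\left(W(y)\right),\qquad y\in\mathcal{V},
\end{equation*}
subject to the boundary data $W(0)=0$, $DW(0)=\mathrm{incl}_{E\hookrightarrow\mathbb{R}^{N}}$, $R(0)=0$, $DR(0)=A|_{E}$. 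The image of $W$ is then an invariant manifold tangent to $E$ at the origin with $\dim W(\mathcal{V})=\dim E$, and $R$ describes the reduced dynamics on it.

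First I would develop the Taylor expansions $W(y)=y+\sum_{|m|\geq 2} W_{m} y^{m}$ and $R(y)=A|_{E}\,y+\sum_{|m|\geq 2} R_{m} y^{m}$. Projecting the invariance equation onto the eigenbasis of $A$ produces, at each multi-index $m$ with $|m|\geq 2$, homological equations of the schematic form
\begin{equation*}
\bigl(\langle m,\lambda\rangle_{E}-\lambda_{l}\bigr)\,W_{m,l} \;=\; \Phi_{m,l}\!\left(\{W_{m'}\}_{|m'|<|m|},\,\{R_{m'}\}_{|m'|<|m|}\right),
\end{equation*}
where $\Phi_{m,l}$ is a polynomial in strictly lower-order coefficients. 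The hypothesis \eqref{eq:unforced_nonresonance} forces every denominator with $\lambda_{l}\notin\mathrm{Spect}(A|_{E})$ and $2\leq|m|\leq\sigma(E)$ to be nonzero, so the normal coefficients $W_{m,l}$ are uniquely determined; the tangential coefficients are absorbed into $R_{m}$ by a standard choice of style (graph style over $E$). For $|m|>\sigma(E)$ the real part of $\langle m,\lambda\rangle_{E}$ becomes strictly more negative than $\mathrm{Re}\,\lambda_{l}$ for all $\lambda_{l}\notin\mathrm{Spect}(A|_{E})$, by the very definition of $\sigma(E)$ in Definition \ref{def:spectral quotients}, so no further resonant denominator can appear. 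At this point the hypotheses of the Cabr\'e--Fontich--de la Llave theorem are met, and one invokes it to upgrade the formal series to a bona fide $W\in C^{r}$ solution, covering each case $r\in\mathbb{N}^{+}\cup\{\infty,a\}$ listed in \eqref{eq:r-def}.

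For uniqueness in the sense of Definition \ref{def:SSM}, I would argue that any other invariant manifold tangent to $E$ of class $C^{\sigma(E)+1}$ must have the same Taylor expansion through order $\sigma(E)+1$, because its coefficients satisfy the same homological equations with the same (nonzero) denominators. Subtracting two such parametrizations and using the contractive estimates underlying the parametrization method in a suitable Banach space of $C^{\sigma(E)+1}$-small remainders then forces the difference to vanish on a neighborhood of the origin. The infinitely many lower-regularity invariant manifolds exhibited in \eqref{eq:flatsolution} are thereby excluded by the clause (iii) of Definition \ref{def:SSM}, so the manifold produced above is the unique SSM tangent to $E$.

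The main technical obstacle is matching our nonresonance threshold and the spectral quotient $\sigma(E)$ to the precise gap hypotheses of the invariant manifold result used as a black box: one must verify that the real-part inequalities implicit in $\sigma(E)$ imply the contraction/spectral-gap hypothesis of Cabr\'e--Fontich--de la Llave for the linearization restricted to $E$ versus its complement. A secondary subtlety is the non-semisimple case, where Jordan blocks of $A|_{E}$ contribute polynomial-in-$t$ factors that slightly modify the homological equations; one then works with the Jordan normal form and checks that the nonresonance conditions \eqref{eq:unforced_nonresonance} still yield invertible denominator operators in the relevant order range, so that the same existence, regularity, and uniqueness conclusions carry through.
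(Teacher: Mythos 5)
Your proposal is correct and follows essentially the same route as the paper: both reduce the statement to the Cabr\'e--Fontich--de la Llave parametrization-method theorem by verifying that the spectral quotient $\sigma(E)$ realizes the required spectral-gap condition and that the nonresonance hypotheses \eqref{eq:unforced_nonresonance} coincide with theirs, with existence, $C^{\sigma(E)+1}$-uniqueness and parameter dependence then read off from that theorem. The only cosmetic difference is that the paper applies the map version of the theorem to the time-one map of the flow (so the conditions become multiplicative, in terms of $e^{\lambda_{j}}$), whereas you sketch the homological equations directly for the vector field; the content is the same.
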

\emph{Then the following statements hold:}
\begin{description}
\item [{\emph{(i)}}] \emph{There exists a class $C^{r}$ SSM, $W(0)$,
tangent to the spectral subspace $E$ at the trivial NNM, $x=0$.
Furthermore, $\dim W(0)=\dim E.$}
\item [{\emph{(ii)}}] W(0)\emph{ is unique among all $C^{\sigma(E)+1}$
invariant manifolds with the properties listed in (i).}
\item [{\emph{(iii)}}] \emph{If $f_{0}$ is jointly $C^{r}$ in $x$ and
an additional parameter vector $\mu$, then the SSM $W(0)$ is jointly
$C^{r}$ in $x$ and $\mu.$ In particular, if $f_{0}(x,\mu)$ is
$C^{\infty}$ or analytic, then $W(0)$ persists under small perturbations
in the parameter $\mu$, and will depend on these perturbations in
a $C^{\infty}$ or analytic fashion, respectively.}\end{description}
\begin{proof}
We deduce the results from a more general theorem of Cabré, Fontich
and de la Llave \cite{cabre03} in Appendix \ref{sub:proof SSM unforced}.
\end{proof}
In short, Theorem \ref{theo:SSM unforced} states that a unique smoothest
Shaw--Pierre-type invariant surface, i.e., an SSM in the sense of
Definition \ref{def:SSM}, exists and persists, as long as no low-order
resonances arise between the master modes and the enslaved modes.
The order of these nonresonance conditions varies from one type of
SSM to the other, as we discuss next.

\subsection{Application to specific spectral subspaces}

We now spell out the meaning of Theorem \ref{theo:SSM unforced} for
different choices of the spectral subspace $E$. We specifically consider
spectral subspaces $E_{j_{1},\ldots,j_{q}}$, where the selected $q$
eigenvalues $\lambda_{j_{1}},\ldots,\lambda_{j_{q}}$ are ordered
so that their real parts form a nondecreasing sequence: 
\begin{equation}
\mathrm{Re}\lambda_{j_{1}}\leq\ldots\mathrm{\leq Re}\lambda_{j_{q}}<0.\label{eq:q eigenvalues ordering}
\end{equation}
We order the real parts of the remaining $N-q$ eigenvalues as 
\begin{equation}
\mathrm{Re}\lambda_{j_{q+1}}\leq\ldots\mathrm{\leq Re}\lambda_{j_{d}}<0.\label{eq:rest of eigenvalues ordering}
\end{equation}
Here $\mathrm{Re}\lambda_{j_{q+1}}$ may be larger or smaller than
the real parts of any of the eigenvalues listed in \eqref{eq:q eigenvalues ordering}. 

We distinguish three types of SSMs in our discussion (cf. Fig. \ref{fig:fast_slow_subspaces}).
\begin{itemize}
\item \emph{A fast spectral submanifold} (\emph{fast SSM})\emph{,} $W_{N-q+1,\ldots,N}(0)$,
is an SSM in the sense of Definition \ref{def:SSM}, with $E_{N-q+1,\ldots,N}$
chosen as the subspace of the $q$ strongest decaying modes of the
linearized system. Here $q\leq N$, with $q=N$ marking the special
case of a fast spectral submanifold that coincides with the domain
of attraction of the fixed point at $x=0$.
\item \emph{An intermediate spectral submanifold (intermediate SSM),} $W_{j_{1},\ldots,j_{q}}(0)$
is an SSM in the sense of Definition \ref{def:SSM}, serving as the
nonlinear continuation of 
\begin{equation}
E_{j_{1},\ldots,j_{q}}=E_{j_{1}}\oplus E_{j_{2}}\oplus\ldots\oplus E_{j_{q}}\label{eq:intermediate SSM}
\end{equation}
for a general choice of the $q<N$ eigenspaces $E_{j_{1}},\ldots,E_{j_{q}}$. 
\item \emph{A slow spectral submanifold (slow SSM),} $W_{1,\ldots,q}(0)$,
is an SSM in the sense of Definition \ref{def:SSM}, with the underlying
spectral subspace $E_{1,\ldots,q}$ chosen as the subspace of the
$q<N$ slowest decaying modes of the linearized system. 
\end{itemize}
In Figure \ref{fig:fast_slow_subspaces}, we illustrate parts of the
spectrum of $A$ that generate fast, intermediate and slow spectral
subspaces, whose smoothest nonlinear continuations are the fast, intermediate
and slow SSMs. 
\begin{figure}[H]
\begin{centering}
\includegraphics[width=0.8\textwidth]{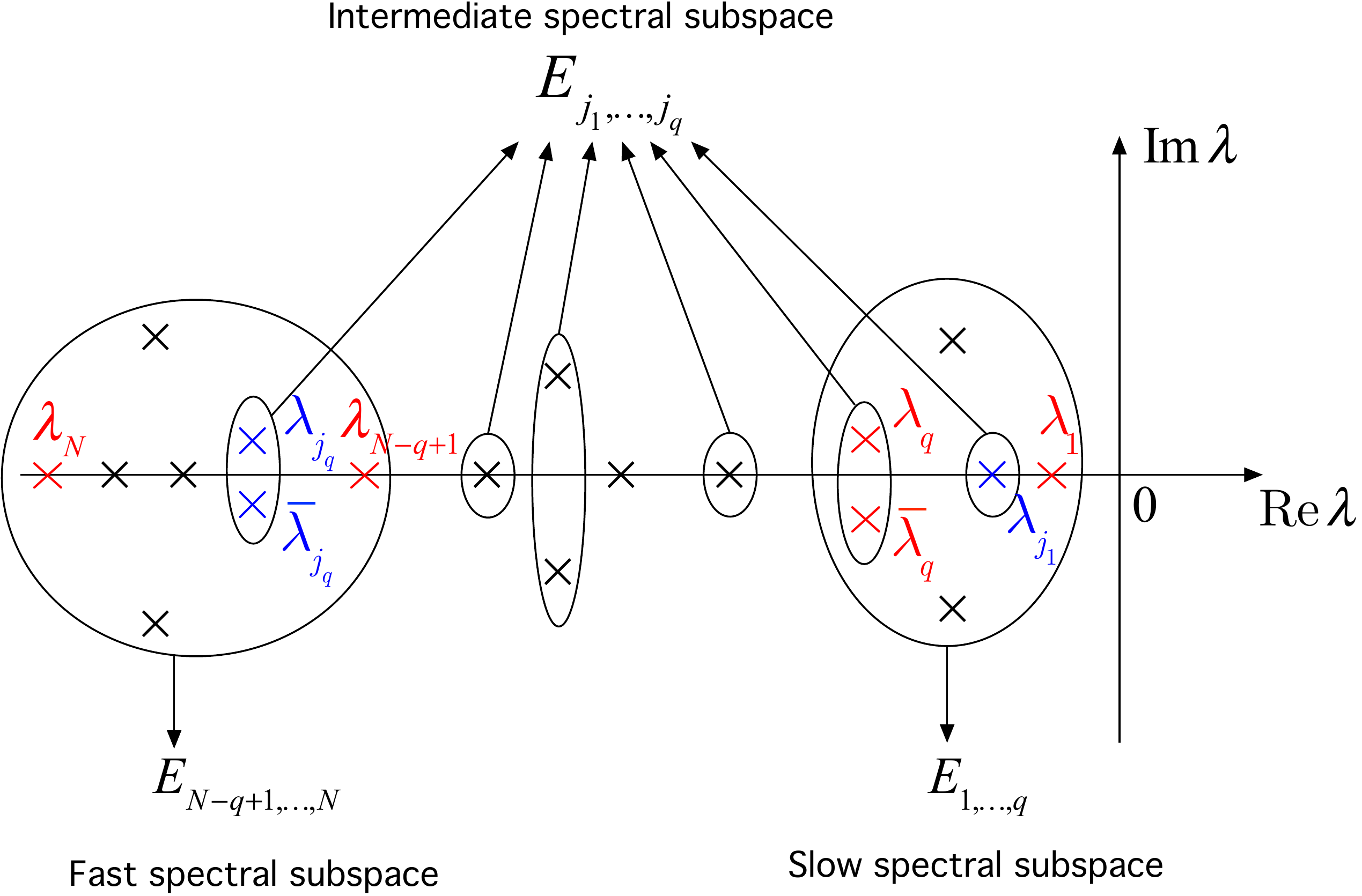}\caption{Fast, intermediate and slow spectral subspaces identified from the
spectrum of $A$. The smoothest nonlinear continuations of these along
an NNM are fast, intermediate and slow SSMs of the NNM.\label{fig:fast_slow_subspaces}}

\par\end{centering}

\end{figure}

Table 1 summarizes relevant  relative spectral quotients and nonresonance
conditions, as obtained from a direct application of Theorem \ref{theo:SSM unforced}
to fast, intermediate and slow spectral subspaces.

\noindent \begin{flushleft}
\begin{tabular}{|l|c|c|c|}
\hline 
 & \textbf{\small{}Fast SSM } & \textbf{\small{}Intermediate SSM } & \textbf{\small{}Slow SSM }\tabularnewline
\hline 
\hline 
{\footnotesize{}$E$} & {\footnotesize{}$E_{N-q+1,\ldots,N}$} & {\footnotesize{}$E_{j_{1},\ldots,j_{q}}$} & {\footnotesize{}$E_{1,\ldots,q}$}\tabularnewline
\hline 
{\footnotesize{}$\sigma(E)$} & {\footnotesize{}0} & {\footnotesize{}$\mathrm{Int}\left[\mathrm{Re}\lambda_{j_{q+1}}/\mathrm{Re}\lambda_{j_{q}}\right]$} & {\footnotesize{}$\mathrm{Int}\left[\mathrm{Re}\lambda_{N}/\mathrm{Re}\lambda_{1}\right]$}\tabularnewline
\hline 
\multirow{2}{*}{{\footnotesize{}Nonresonance:}} & \multirow{2}{*}{{\footnotesize{}-}} & {\footnotesize{}$\sum_{i=1}^{q}\left(a_{i}\lambda_{j_{i}}+b_{i}\bar{\lambda}_{j_{i}}\right)\neq\lambda_{l}$} & {\footnotesize{}$\sum_{i=1}^{q}\left(a_{i}\lambda_{i}+b_{i}\bar{\lambda}_{i}\right)\neq\lambda_{l}$}\tabularnewline
\cline{3-4} 
 &  & {\footnotesize{}$\left|a\right|+\left|b\right|\in[2,\sigma(E)],\,\,\,\,l\in\left[j_{q+1},j_{N}\right]$ } & {\footnotesize{}$\left|a\right|+\left|b\right|\in[2,\sigma(E)],\,\,\,\,l\in\left[q+1,N\right]$ }\tabularnewline
\hline 
{\footnotesize{}$W(0)$} & {\footnotesize{}$W_{N-q+1,\ldots,N}(0)$} & {\footnotesize{}$W_{j_{1},\ldots,j_{q}}(0)$} & {\footnotesize{}$W_{1,\ldots,q}(0)$}\tabularnewline
\hline 
\end{tabular}
\par\end{flushleft}

\begin{center}
Table 1: Conditions for different types of SSMs obtained from Theorem
\ref{theo:SSM unforced}, with parameters $a,b\in\mathbb{N}^{q}$
and $l\in\mathbb{N}$.
\par\end{center}

For fast SSMs, Table 1 requires no non-resonance condition, giving
just a sharpened version of a classic result in dynamical systems,
the strong stable manifold theorem (see, e.g., Hirsch, Pugh and Shub
\cite{hirsch77}). If the nonlinear function $f_{0}$ is analytic
(class $C^{a}$) in a neighborhood of the origin, then so is the unique
fast SSM, $W_{N-q+1,\ldots,N}(0)$. In that case, seeking the unique
fast SSM as a Taylor-expanded graph over the fast stable subspace
$E_{N-q+1,\ldots,N}$ leads to a convergent Taylor series for $W_{N-q+1,\ldots,N}(0)$.
By statement (iii) of Theorem \ref{theo:SSM unforced}, the same holds
for Taylor expansions with respect to any parameter $\mu$ on which
the system may depend analytically.

That said, the relevance of fast SSMs for model reduction is generally
limited. These manifolds contain atypical trajectories that reach
the origin in the shortest possible time, practically unaffected by
the remaining slower modes. Special cases of relevance may arise,
for instance, if one wishes to control general motions that exhibit
the fastest possible decay to the equilibrium. 

The two-dimensional invariant manifolds originally envisioned by Shaw
and Pierre \cite{shaw93} generally fall in the category of intermediate
SSMs, with $q=1,$ $\mathrm{Im\,\lambda}_{j_{1}}\neq0,$ and $\dim E_{j_{1}}=2.$
In the later work by Peschek et al. \cite{peschek01}, invariant surfaces
defined over an arbitrary $q\geq1$ number of internally resonant
modes are envisioned, although the resonance among these modes is
not exploited in the construction. By Table 1, all these intermediate
SSMs exist in a rigorous mathematical sense, as long as the spectral
subspaces over which they are constructed exhibit no low-order resonances
with the remaining modes (resonances \emph{within} those spectral
subspaces are allowed). A low-order resonance is one whose order $\left|a\right|+\left|b\right|$
does not exceed $\sigma(E)=\mathrm{Int}\left[\mathrm{Re}\lambda_{j_{q+1}}/\mathrm{Re}\lambda_{j_{q}}\right]$.
Any such intermediate SSM is of class $C^{r}$, but is already unique
in the class of $C^{\sigma(E)+1}$ invariant surfaces tangent to $E_{j_{1},\ldots,j_{q}}$.
This means that a Taylor expansion of order $\sigma(E)+1$ or higher
is only valid for a unique intermediate SSM. 

Slow SSMs exist by Theorem \ref{theo:SSM unforced} under the conditions
detailed in the last column of Table 1. Again, no low-order resonances
are allowed between the $q$ slowest decaying modes in $E_{1,\ldots,q}$
and the remaining faster modes outside{\footnotesize{} }$E_{1,\ldots,q}${\footnotesize{}.}
The order of the resonance is low if it does not exceed the relative
spectral quotient $\sigma(E)=\mathrm{Int}\left[\mathrm{Re}\lambda_{N}/\mathrm{Re}\lambda_{1}\right]$.
Interestingly, this non-resonance order has no dependence on the number
$q$ of slow modes considered. As intermediate SSMs, slow SSMs are
unique among class $C^{\sigma(E)+1}$ invariant manifolds tangent
to $E_{1,\ldots,q}$ at the trivial normal mode $x=0$. For model
reduction purposes, slow SSMs offer the most promising option, as
we discuss in Section \ref{sec:modelreduction}.

Shaw and Pierre \cite{shaw93,shaw94}, Elmegard \cite{elmegard14},
and Renson et al. \cite{renson16} allude to the theory of normally
hyperbolic invariant manifolds by Fenichel \cite{fenichel71} as justification
for the numerical computation of general SSMs. Another hint in the
literature at a rigorous existence result for two-dimensional autonomous
SSMs in analytic systems is given by Cirillo et al. \cite{cirillo15b},
who invoke a classic analytic linearization theorem by Poincaré \cite{poincare79}.
A closer inspection of these results reveals, however, that the applicability
of the theorems of Fenichel and Poincaré is substantially limited
in practical settings (see Appendices \ref{sub:NHIM proof unforced}
and \ref{sub:Poincare proof unforced} for details).
\begin{example}
{[}\emph{Application of Theorem }\ref{theo:SSM unforced}{]} Consider
the planar system 
\begin{eqnarray}
\dot{x} & = & -x,\nonumber \\
\dot{y} & = & -\sqrt{24}y+x^{2}+x^{3}+x^{4}+x^{5},\label{eq:resonant example-1}
\end{eqnarray}
which is analytic on the whole plane, i.e., we have $r=a$ in the
notation of Theorem \ref{theo:unforced NNM}. The eigenvalues of the
linearized system at the origin are $\lambda_{2}=-\sqrt{24}$ and
$\lambda_{1}=-1,$ giving $N=2$ and $q=1$ for the construction of
a slow SSM $W_{1}(0)$ over the slow subspace $E_{1}=\left\{ (x,y)\,:\,y=0\right\} $.
The required order of nonresonance from Table 1 is, therefore, 
\[
\sigma(E)=\mathrm{Int}\left[\mathrm{Re}\lambda_{N}/\mathrm{Re}\lambda_{1}\right]=\mathrm{Int}\left[\sqrt{24}\right]=4,
\]
up to which the non-resonance condition 
\[
a_{1}\cdot(-1)\neq-\sqrt{24},\quad a_{1}=2,3,4
\]
is satisfied. Then Theorem \ref{theo:SSM unforced} guarantees the
existence of an analytic (class $C^{a})$ slow SSM, $W_{1}(0)$, that
is unique among all class $C^{5}$ invariant manifolds tangent to
the $x$ axis at the origin. We seek this slow SSM in the form 
\begin{equation}
y=h(x)=a_{2}x^{2}+a_{3}x^{3}+a_{4}x^{4}+a_{5}x^{5}+\ldots,\label{eq:analyticSSM}
\end{equation}
the minimal Taylor expansion that only exists for the analytic SSM
but not for the other invariant manifolds. Differentiation of \eqref{eq:analyticSSM}
in time gives
\begin{equation}
\dot{y}=\left[2a_{2}x+3a_{3}x^{2}+4a_{4}x^{3}+5a_{5}x^{4}+\mathcal{O}(x^{5})\right]\dot{x}=-2a_{2}x^{2}-3a_{3}x^{3}-4a_{4}x^{4}-5a_{5}x^{5}+\mathcal{O}(x^{6}),\label{eq:resonant eq ydot1-1}
\end{equation}
while substitution of \eqref{eq:analyticSSM} into the second equation
in \eqref{eq:resonant example-1} gives
\begin{equation}
\dot{y}=\left(1-\sqrt{24}a_{2}\right)x^{2}+\left(1-\sqrt{24}a_{3}\right)x^{3}+\left(1-\sqrt{24}a_{4}\right)x^{4}+\left(1-\sqrt{24}a_{5}\right)x^{5}+\mathcal{O}(x^{6}).\label{eq:resonant eq ydot2-1}
\end{equation}
Equating \eqref{eq:resonant eq ydot1-1} and \eqref{eq:resonant eq ydot2-1}
gives
\begin{equation}
a_{2}=\frac{1}{\sqrt{24}-2},\quad a_{3}=\frac{1}{\sqrt{24}-3},\quad a_{4}=\frac{1}{\sqrt{24}-4},\quad a_{5}=\frac{1}{\sqrt{24}-5},\qquad a_{j}=0,\quad j\geq6.\label{eq:slowmanifold coefficients}
\end{equation}
 We also observe that the ODE \eqref{eq:resonant example-1} is explicitly
solvable: a direct integration gives $x(t)$ which, upon substitution
into the $y$ equation, yields an inhomogeneous linear ODE for $y(t).$
Combining the expressions for $x(t)$ and $y(t)$ enables us to eliminate
the time variable $t$, giving the equation of trajectories in the
form
\begin{eqnarray*}
y(x;x_{0},y_{0}) & = & K(x_{0},y_{0})x^{\sqrt{24}}+\frac{x^{2}}{\sqrt{24}-2}+\frac{x^{3}}{\sqrt{24}-3}+\frac{x^{4}}{\sqrt{24}-4}+\frac{x^{5}}{\sqrt{24}-5},\\
K(x_{0},y_{0}) & = & \frac{y_{0}}{x_{0}^{\sqrt{24}}}-\frac{x_{0}^{2-\sqrt{24}}}{\sqrt{24}-2}-\frac{x_{0}^{3-\sqrt{24}}}{\sqrt{24}-3}-\frac{x_{0}^{4-\sqrt{24}}}{\sqrt{24}-4}-\frac{x_{0}^{5-\sqrt{24}}}{\sqrt{24}-5},
\end{eqnarray*}
with $(x_{0},y_{0})$ denoting an arbitrary initial condition on the
trajectory. This shows that the graph $y(\,\cdot\,;x_{0},y_{0})$
of the slow SSM is generally only of class $C^{4}$, as the term $K(x_{0},y_{0})x^{\sqrt{24}}$
admits only four continuous derivatives at the origin. The only exception
is the case $K(x_{0},y_{0})=0,$ for which $y(\,\cdot\,;x_{0},y_{0})$
becomes a quintic polynomial in $x$ and hence analytic over the whole
plane. But $K(x_{0},y_{0})=0$ holds only along the points 
\begin{equation}
y_{0}=\frac{x_{0}^{2}}{\sqrt{24}-2}+\frac{x_{0}^{3}}{\sqrt{24}-3}+\frac{x_{0}^{4}}{\sqrt{24}-4}+\frac{x_{0}^{5}}{\sqrt{24}-5},\label{eq:SSMexample}
\end{equation}
which lie precisely on the SSM, $W_{1}(0)$, whose Taylor expansion
we computed in \eqref{eq:slowmanifold coefficients}. This example,
therefore, illustrates the sharpness of the results of Theorem \ref{theo:SSM unforced}:
the analytic slow SSM, $W_{1}(0)$, is indeed unique among all five
times continuously differentiable invariant manifolds tangent to the
$x$ axis at the origin. We plot in red the unique analytic SSM for
this example in Fig. \ref{fig:resonant}a.
\begin{figure}[H]
\begin{centering}
\includegraphics[width=0.6\textwidth]{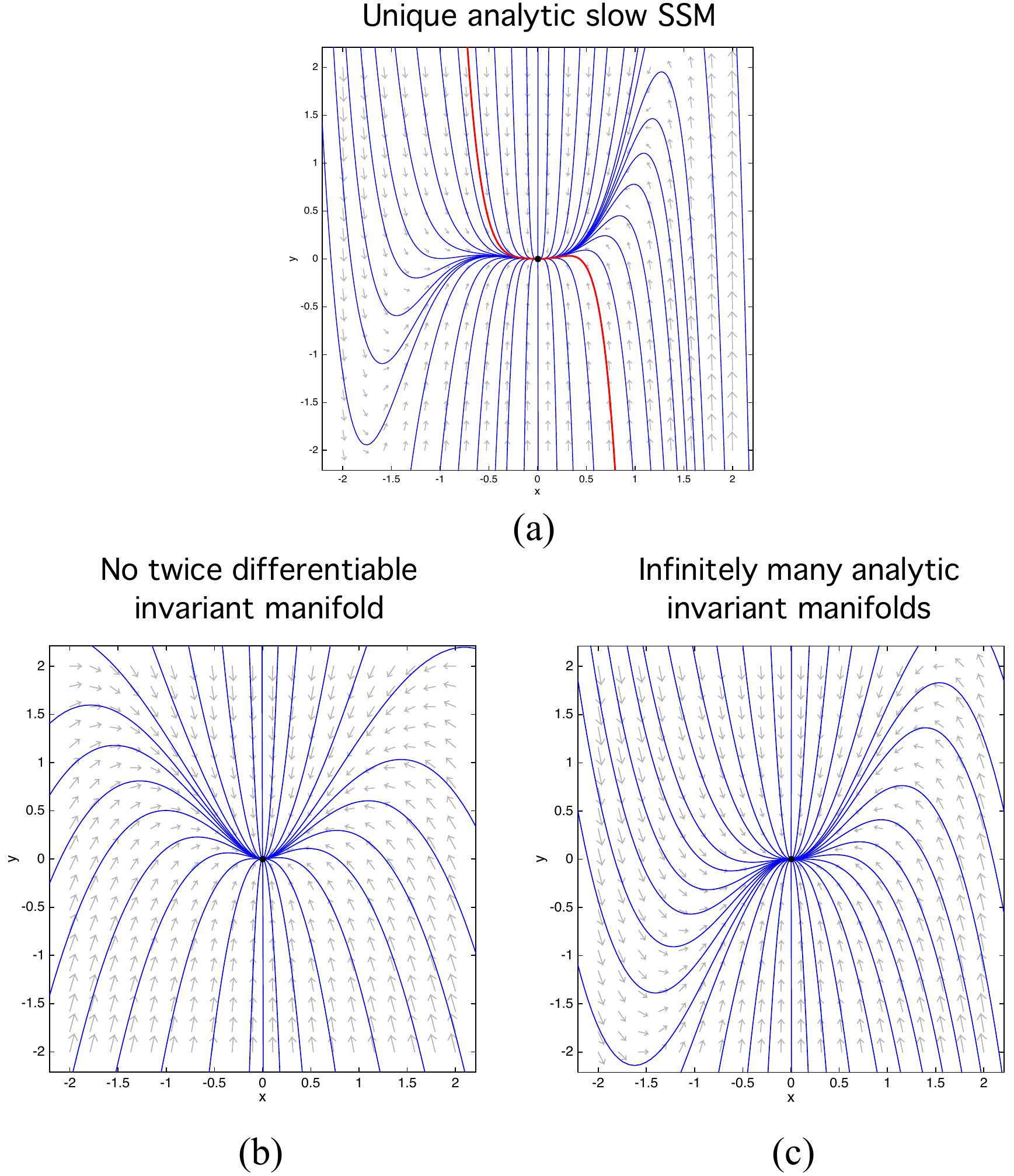}
\par\end{centering}

\caption{(a) Phase portrait of system \eqref{eq:resonant example-1}, with
the unique analytic SSM guaranteed by Theorem \ref{theo:SSM unforced}
computed explicitly (red) (b) Phase portrait of system \eqref{eq:resonant example}.
(c) Phase portrait of system \eqref{eq:resonant example2}. For all
three plots: trajectories are shown in blue and the vector field is
indicated with grey arrows.\label{fig:resonant}}
\end{figure}

\end{example}

\begin{example}
{[}\emph{Optimality of Theorem \ref{theo:SSM unforced}}{]} Consider
the planar dynamical system
\begin{eqnarray}
\dot{x} & = & -x,\nonumber \\
\dot{y} & = & -2y+x^{2},\label{eq:resonant example}
\end{eqnarray}
with its phase portrait shown in Fig. \ref{fig:resonant}b. The system
is analytic over the whole plane, and has a stable node-type fixed
point at the origin with eigenvalues $\lambda_{2}=-2$ and $\lambda_{1}=-1.$
This system, therefore, falls into the slow SSM case of Table 1 with
$\sigma(E)=2.$ The corresponding nonresonance condition is, however,
violated because 
\[
a_{1}\cdot(-1)=-2,\quad a_{1}=2.
\]
Theorem \ref{theo:SSM unforced}, therefore, fails to apply, and hence
we have no a priori mathematical guarantee for the existence or uniqueness
of an at least $C^{2}$ slow SSM. To see if such a manifold nevertheless
exists, we again seek a slow SSM in the form 
\begin{equation}
y=h(x)=a_{2}x^{2}+a_{3}x^{3}+\ldots,\label{eq:resonant submanifold}
\end{equation}
a graph with quadratic tangency to $E_{1}$ at the origin. Differentiation
of this graph in time gives
\begin{equation}
\dot{y}=\left[2a_{2}x+\mathcal{O}(x^{2})\right]\dot{x}=-2a_{2}x^{2}+\mathcal{O}(x^{3}),\label{eq:resonant eq ydot1}
\end{equation}
while substitution of the graph into the second equation in \eqref{eq:resonant example}
gives
\begin{equation}
\dot{y}=\left(-2a_{2}+1\right)x^{2}+\mathcal{O}(x^{3}).\label{eq:resonant eq ydot2}
\end{equation}
Equating \eqref{eq:resonant eq ydot1} and \eqref{eq:resonant eq ydot2}
gives no solution for $a_{2}$, and hence no $C^{2}$ invariant manifold
tangent to $E_{1}$ exists in this example. There are infinitely many
invariant manifolds tangent to the spectral subspace $E_{1}$ but
none of them is smoother than the other one: they all just have one
continuous derivative at the origin. As a consequence, no SSM exists
by Definition \ref{def:SSM}. Next, consider the slightly different
dynamical system 
\begin{eqnarray}
\dot{x} & = & -x,\nonumber \\
\dot{y} & = & -2y+x^{3},\label{eq:resonant example2}
\end{eqnarray}
with its phase portrait shown in Fig. \ref{fig:resonant}c, which
violates the same nonresonance condition as \ref{eq:resonant example}.
This time, we find infinitely many analytic invariant manifolds tangent
to the spectral subspace $E_{1}$. Indeed, any member of the analytic
manifold family $y(x)=Cx^{2}-x^{3}$, with the parameter $C\in\mathbb{R}$,
is invariant and tangent to the spectral subspace $E_{1}$ of \eqref{eq:resonant example2}
at the origin. Thus, the violation of the nonresonance condition in
the slow case of Table 1 may either lead to the non-existence of a
single $C^{2}$ invariant manifold, or to a high degree of non-uniqueness
of smooth (even analytic) invariant manifolds.
\end{example}

\begin{example}
\label{ex:shaw-pierre1}{[}\emph{Illustration of Theorem \ref{theo:SSM unforced}
on a mechanical example}{]} We reconsider here the damped nonlinear
mechanical system studied by Shaw and Pierre \cite{shaw93}. Shown
in Fig. \ref{fig:shaw-pierre-model1}, this two-degree-of-freedom
mechanical system consists of two masses connected via springs to
each other and to their environment. Two of the springs are linearly
elastic and linearly damped, while the remaining spring is still elastic
but has a cubic nonlinearity as well. The displacements $q_{1}$ and
$q_{2}$, as well as the damping coefficient $c$, the spring constant
$k$, and the coefficient $\gamma$ of the cubic nonlinearity, are
all non-dimensionalized.

\begin{figure}[H]
\begin{centering}
\includegraphics[width=0.6\textwidth]{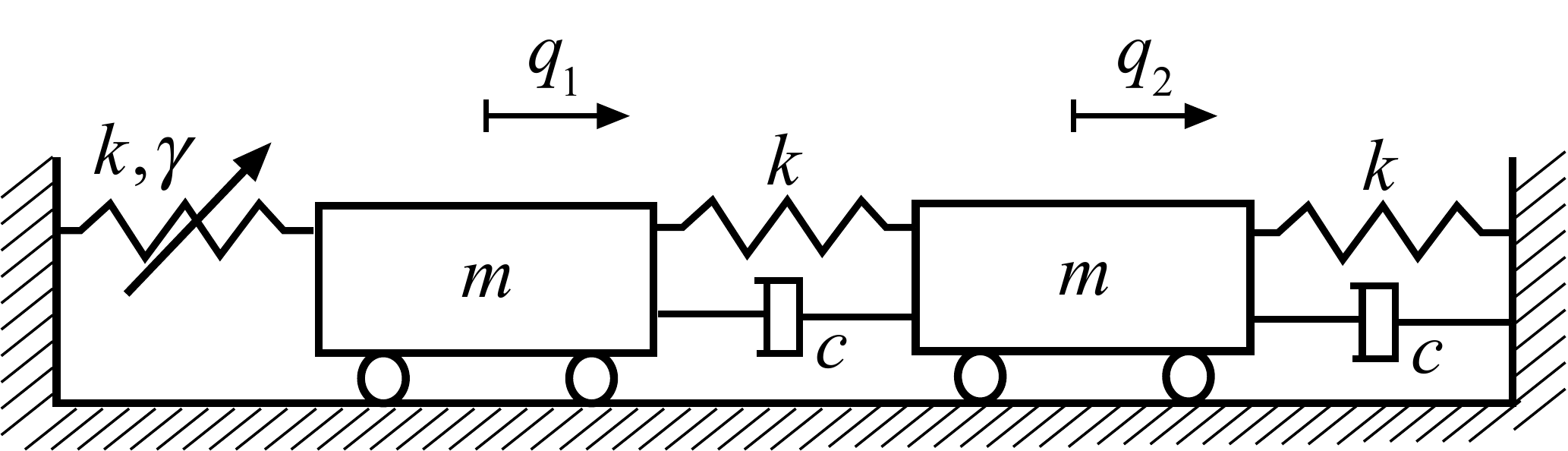}
\par\end{centering}

\caption{The two-degree-of-freedom mechanical model considered by Shaw and
Pierre \cite{shaw93}.\label{fig:shaw-pierre-model1}}
\end{figure}
The equations of motion for this system are of the general form \eqref{eq:system}
with $n=N/2=2$ and $q=(q_{1},q_{2}),$ and with the quantities
\[
M=\left(\begin{array}{cc}
m & 0\\
0 & m
\end{array}\right),\quad C=\left(\begin{array}{cc}
c & -c\\
-c & 2c
\end{array}\right),\quad K=\left(\begin{array}{cc}
2k & -k\\
-k & 2k
\end{array}\right),\quad G=B=\left(\begin{array}{cc}
0 & 0\\
0 & 0
\end{array}\right),
\]
\[
F_{0}(q,\dot{q})=\left(\begin{array}{c}
-\gamma q_{1}^{3}\\
0
\end{array}\right),\quad\epsilon F_{1}=\left(\begin{array}{c}
0\\
0
\end{array}\right).
\]
In the variables $x_{1}=q_{1},x_{2}=\dot{q}_{1}$, $x_{3}=q_{2},x_{4}=\dot{q}_{2}$,
the first-order form \eqref{eq:1storder_system} of the system has
\[
A=\left(\begin{array}{cccc}
0 & 1 & 0 & 0\\
-\frac{2k}{m} & -\frac{c}{m} & \frac{k}{m} & \frac{c}{m}\\
0 & 0 & 0 & 1\\
\frac{k}{m} & \frac{c}{m} & -\frac{2k}{m} & -\frac{2c}{m}
\end{array}\right),\quad f_{0}(x)=\left(\begin{array}{c}
0\\
-\gamma x_{1}^{3}\\
0\\
0
\end{array}\right),\quad\epsilon f_{1}=\left(\begin{array}{c}
0\\
0\\
0\\
0
\end{array}\right).
\]
Shaw and Pierre \cite{shaw93} fixed the parameter values
\begin{equation}
c=0.3,\quad k=1,\quad m=1,\quad\gamma=0.5,\label{eq:S-P parameter values}
\end{equation}
and reported for this parameter setting the eigenvalues
\begin{equation}
\lambda_{1}=-0.0741\pm1.0027i\qquad\lambda_{2}=-0.3759\pm1.6812i.\label{eq:SP eigenvalues}
\end{equation}
This implies the existence of two two-dimensional real invariant subspaces,
$E_{1}$ and $E_{2}$, for the linearized system. 

Shaw and Pierre calculated a formal cubic-order Taylor expansion for
SSMs tangent to these subspaces at the origin. Since the function
$f_{0}(x)$ is analytic on the whole phase space, Theorem \eqref{theo:SSM unforced}
guarantees the existence of an analytic fast SSM, $W_{2}(0)$. Furthermore,
since $\sigma(E_{2})=0$ holds by Table 1, $W_{2}(0)$ is unique among
all $C^{1}$ invariant manifolds tangent to the fast spectral subspace
$E_{2}$ at the fixed point $x=0$. Theorem \eqref{theo:SSM unforced}
also guarantees the existence of a unique analytic slow SSM, $W_{1}(0)$,
as long as no resonance conditions (listed in the last column of Table
1) up to order 
\[
\sigma(E_{1})=\mathrm{Int}\text{\ensuremath{\left[\frac{\mathrm{Re}\,\lambda_{2}}{\mathrm{Re}\,\lambda_{1}}\right]}=5 }
\]
 hold. These nonresonance conditions take the specific form
\[
-0.0741\left(a_{1}+b_{1}\right)+1.0027\left(a_{1}-b_{1}\right)i\neq-0.3759\pm1.6812i,\quad\left|a_{1}\right|+\left|b_{1}\right|=2,3,4,5,
\]
which are all satisfied, as seen by inspection. 

We conclude that the analytic slow SSM $W_{1}(0)$ exists, and is
unique among all $C^{6}$ invariant manifolds tangent to the slow
spectral subspace $E_{1}$ at the fixed point $x=0$. Therefore, the
cubic-order Taylor expansion of Shaw and Pierre \cite{shaw93} more
than captures the fast SSM $W_{2}(0)$ uniquely, but fails to capture
the slow SSM uniquely. Indeed, the latter cubic expansion holds for
infinitely many $C^{5}$ invariant manifolds tangent to the origin
along the slow spectral subspace. A $6^{th}$ order Taylor-expansion
would hold only for the unique analytic slow SSM, for which the expansion
can continued up to any order, giving a convergent power series in
a neighborhood of the origin. The required order of expansion remains
the $6^{th}$ for general underdamped parameter values, but increases
sharply with increasing overdamping (see Fig. \ref{fig:required order of expansion for SP1}).
\begin{figure}[H]
\centering{}\includegraphics[width=0.4\textwidth]{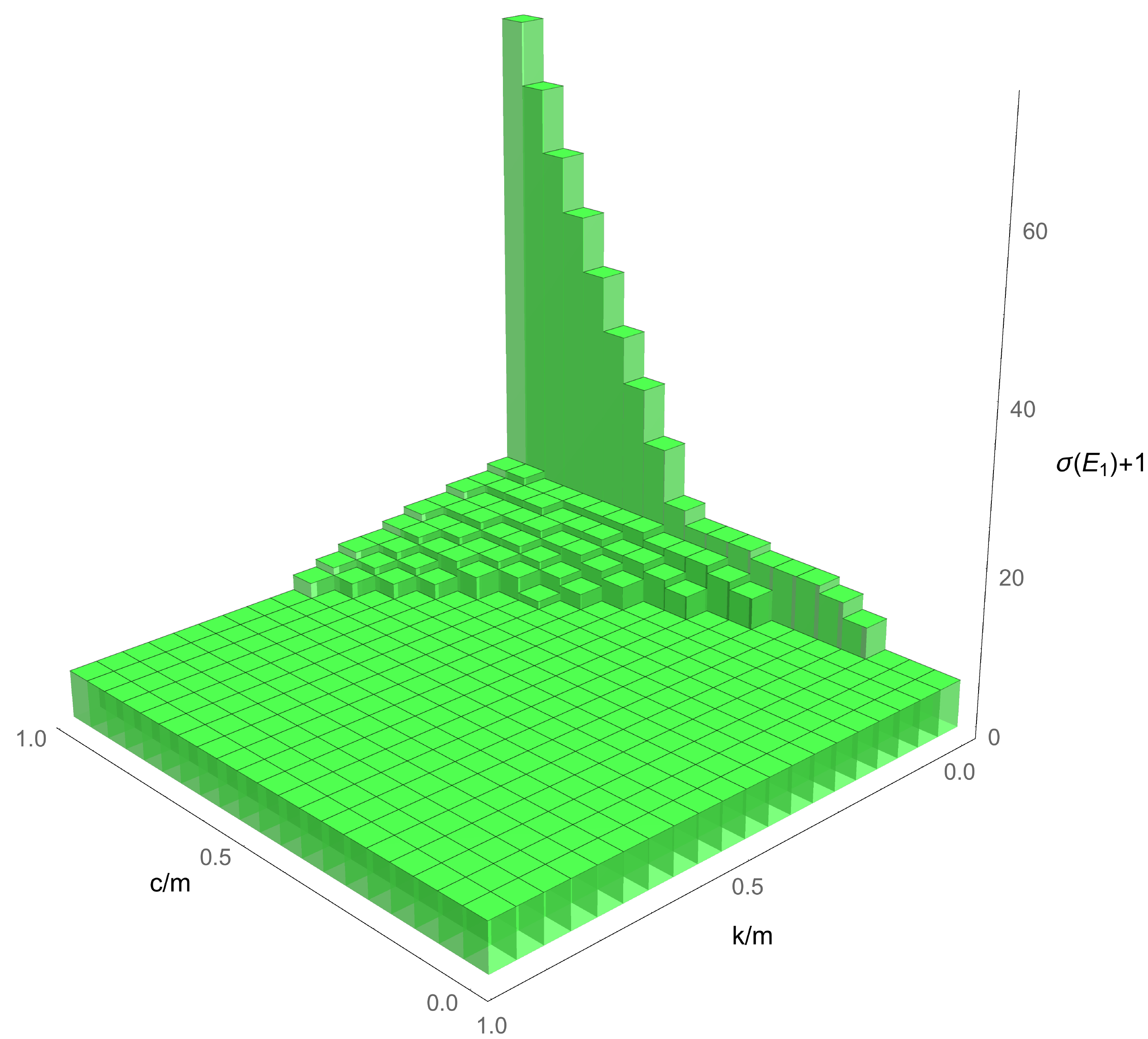}\caption{Dependence of the uniqueness class of the slow SSM in Example \ref{ex:shaw-pierre1}
on the parameters $k/m$ and $c/m$. \label{fig:required order of expansion for SP1}}
\end{figure}

We now carry out the computation of the slow SSM in detail for the
parameter values \eqref{eq:S-P parameter values}. Applying a linear
change of coordinates, we split the state vector $x\in\mathbb{R}^{4}$
as
\end{example}
\begin{equation}
x=(y,z)\in E_{1}\times E_{2},\label{eq:xyz}
\end{equation}
which results in the transformed equations of motion

\begin{equation}
\dot{y}=A_{y}y+f_{0y}(y,z)=\left(\begin{array}{cc}
-0.0741 & 1.0027\\
-1.0027 & -0.0741
\end{array}\right)y+\left(\begin{array}{c}
1.0148\\
-0.2162
\end{array}\right)p(y,z),\label{eq:ysep}
\end{equation}

\begin{equation}
\dot{z}=A_{z}z+f_{0z}(y,z)=\left(\begin{array}{cc}
-0.3759 & 1.6812\\
-1.6812 & -0.3759
\end{array}\right)z+\left(\begin{array}{c}
0.8046\\
-0.1685
\end{array}\right)p(y,z),\label{eq:zsep}
\end{equation}

\[
p(y,z)=-0.5\left(-0.0374\thinspace y_{1}-0.5055\thinspace y_{2}-0.1526\thinspace z_{1}-0.3052\thinspace z_{2}\right)^{3}.
\]

We seek the slow SSM, $W_{1}(0),$ within the class of $C^{6}$ function
in which the analytic SSM is already unique. This requires finding
the coefficients in the $6^{th}$ order Taylor-expansion

\begin{equation}
z=h(y)=\sum_{\left|p\right|=1}^{6}h_{p}y^{p},\quad p=\left(p_{1},p_{2}\right)\in\mathbb{N}^{2},\quad y^{p}=y_{1}^{p_{1}}y_{2}^{p_{2}},\quad h_{p}\in\mathbb{R}^{2}.\label{eq:z poly}
\end{equation}
Differentiating this expression with respect to time and substituting
$\dot{z}$ from \eqref{eq:zsep} gives

\[
\frac{\partial h(y)}{\partial y}\left[A_{y}y+f_{0y}\left(y,h(y)\right)\right]=A_{z}h(y)+f_{0z}\left(y,h(y)\right).
\]
Equating powers of $y$ on both sides of this last expression, we
obtain the unknown coefficients $h_{p}$ in \eqref{eq:z poly} and
hence the slow SSM in the form

\begin{eqnarray*}
z_{1} & = & -0.0278y_{1}^{3}+0.0011y_{1}^{2}y_{2}-0.0026y_{1}y_{2}^{2}+0.0009y_{2}^{3}\\
 &  & +0.0023y_{1}^{5}-0.0006y_{1}^{4}y_{2}+0.0026y_{1}^{3}y_{2}^{2}-0.0007y_{1}^{2}y_{2}^{3}-0.0010y_{1}y_{2}^{4}+0.0002y_{2}^{5},\\
z_{2} & = & -0.0032y_{1}^{3}-0.0470y_{1}^{2}y_{2}-0.0074y_{1}y_{2}^{2}-0.0323y_{2}^{3}\\
 &  & +0.0004y_{1}^{5}+0.0039y_{1}^{4}y_{2}+0.0004y_{1}^{3}y_{2}^{2}+0.0065y_{1}^{2}y_{2}^{3}-0.0005y_{1}y_{2}^{4}+0.0011y_{2}^{5}.
\end{eqnarray*}
Note that the $6^{th}$-order terms (as well as any other odd-order
terms) vanish due to the particular form of the nonlinearity in this
example. The slow SSM obtained in this fashion is shown in Fig. \ref{fig:SSM_minipage}.

\begin{figure}[H]
~~~~~~~~~~~~~~~~~\subfloat[\label{fig:ssm_auto_1}]{\centering{}\includegraphics[width=0.4\textwidth]{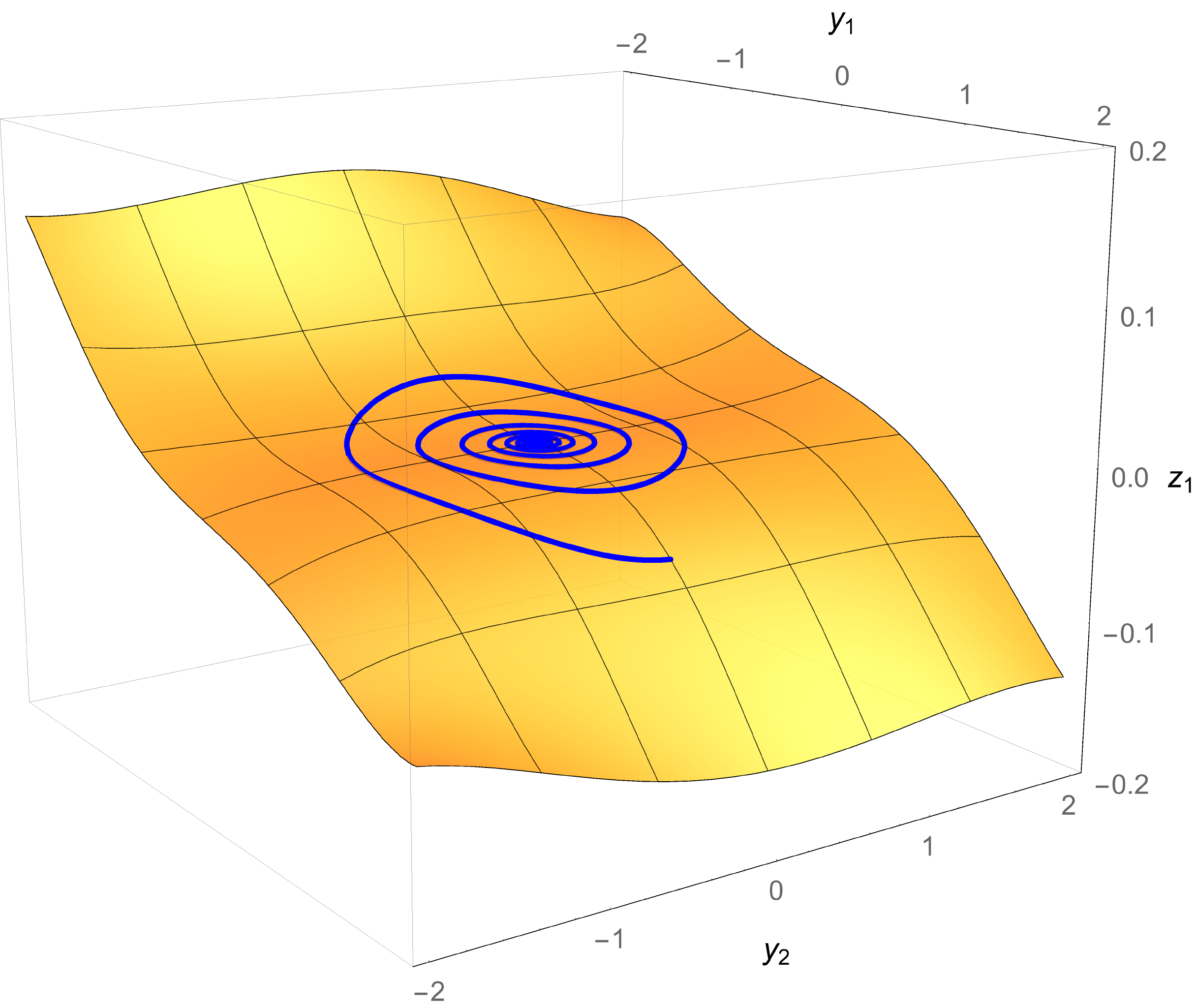}}~~~~~~~~~~~~~~~~\subfloat[\label{fig:ssm_auto_2}]{\centering{}\includegraphics[width=0.4\textwidth]{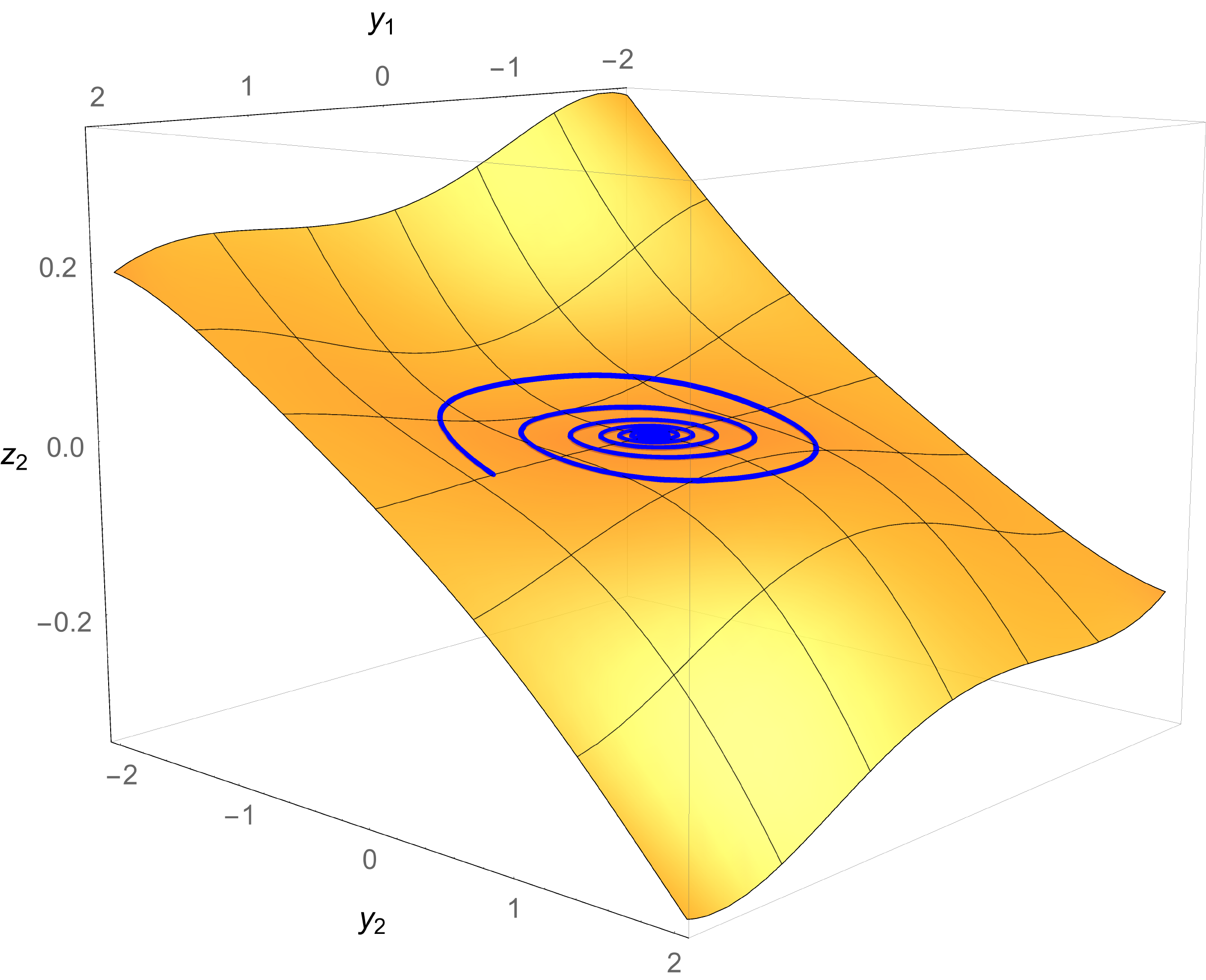}}

\caption{Two views of the slow SSM for the nonlinear oscillator system \eqref{eq:ysep}-\eqref{eq:zsep}.
In the plots (\ref{fig:ssm_auto_1})-(\ref{fig:ssm_auto_2}), $z_{1}$
and $z_{2}$ are shown as a function of $y$ respectively. The blue
curve is a trajectory starting on the SSM from the initial conditions
($y_{1}(0)=1.2$, $y_{2}(0)=0$, $z_{1}(y_{1}(0),y_{2}(0))=-0.042$,
$z_{2}(y_{1}(0),y_{2}(0))=-0.0045$). The trajectory remains close
to the SSM and converges to a trivial NNM, the $(y,z)=(0,0)$ fixed
point. \label{fig:SSM_minipage}}
\end{figure}

We now compare the accuracy of the third-order approximation employed
by Shaw and Pierre \cite{shaw93} to the fifth-order approximation
used here. By the nature of the nonlinearity, this is in fact just
one step up in accuracy, as the fourth-order terms are absent in the
Taylor expansion of the SSM. Figure \ref{fig:Poincare map 1} shows
a Poincaré-map view of our comparison, with dots indicating the intersection
of representative trajectories launched from the approximate slow
SSMs with the $y_{2}=0$ Poincaré section. We conclude that the sixth-order
(which is the same as the fifth-order) approximation to the slow SSM
brings a major improvement in its accuracy. This is evidenced by significantly
reduced trajectory oscillations arising from the lack of exact invariance
of the approximate SSM.

\begin{figure}[H]
\centering{}\includegraphics[width=0.6\textwidth]{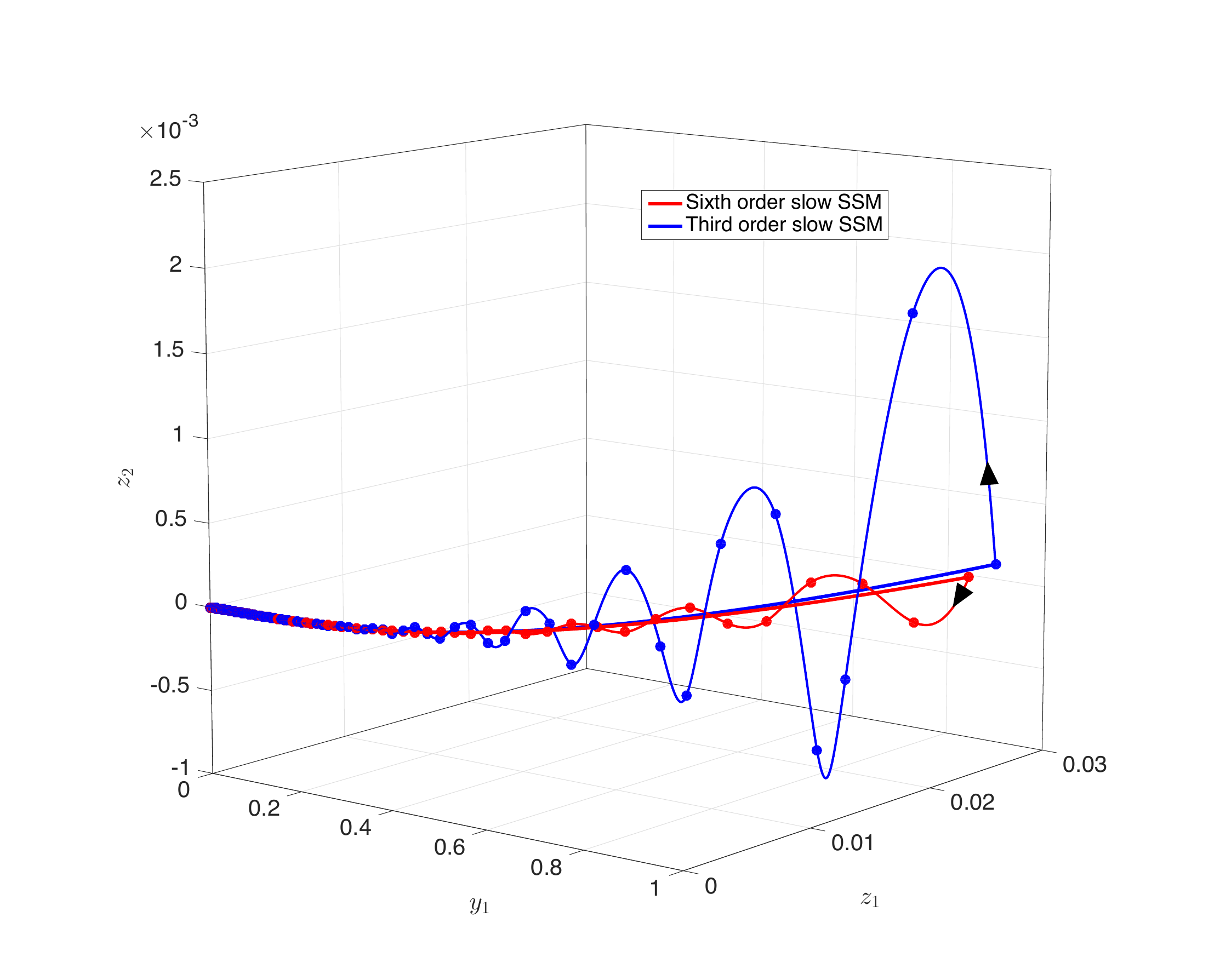}\caption{Poincaré map for trajectories launched on different approximation
to the slow SSM. Dots indicate intersections of the trajectories with
the $y_{2}=0$ hyperplane. Arcs connecting adjacent intersections
are for illustration only, to give a sense of the trajectory evolution.
\label{fig:Poincare map 1}}
\end{figure}

\section{Spectral submanifolds in non-autonomous systems ($k>0$)}

The idea of periodic SSMs ($k=1$) was proposed first by Shaw, Peschek
and Pierre \cite{shaw99} for undamped oscillatory systems, then later
extended by Jiang, Pierre and Shaw \cite{jiang05} for systems with
damping. In these studies, the periodic time-dependence appears as
a perturbation, as in our equation \eqref{eq:fullsystemagain}. As
a parallel development, Sinha, Redkar and Butcher \cite{sinha05}
considered systems with a time-periodic linear part, and applied a
Lyapunov--Floquet transformation to bring this linear part to an autonomous
form before applying the SSM approach of Shaw, Pierre et al. This
treatment appears to be the first one to give a general non-resonance
condition for the Fourier expansion of the SSM to be at least formally
computable (without consideration of convergence) up to a given order. 

In later work, Redkar and Sinha \cite{redkar08} assume single-frequency
external forcing and select the master modes (i.e., those constituting
the spectral subspace of interest) as the ones in resonance or near-resonance
with the external forcing. Gabale and Sinha \cite{gabale11} develop
this approach further, selecting the master modes to be either in
near-resonance with the forcing, or to be those with eigenvalues that
have dominant negative real parts (fast NNMs). The authors provide
nonresonance conditions for formal computability up to any order,
but the actual convergence of the approximation to a true invariant
manifold is not discussed. As noted before, such a convergence is
not guaranteed, as a PDE for an invariant surface can always be written
down for any system, but it may not have a solution under the prescribed
boundary conditions. Gabale and Sinha \cite{gabale11} also discuss
the case of a time-periodic linear part, using a Lyapunov--Floquet
transformation. This appears to be the first reference where the Shaw--Pierre
invariant manifold approach is formally applied in the presence of
two frequencies. 

In summary, as in the autonomous case, only formal calculations of
non-autonomous SSMs have appeared in the literature without mathematical
arguments for existence and uniqueness. Unlike in the autonomous case,
however, the connection of the assumed non-autonomous SSM to any surviving
NNM (periodic orbit or invariant torus) has remained unexplored. It
is therefore unclear in the literature what the orbits in the envisioned
invariant manifolds should asymptote to. In the following, we address
these conceptual gaps in the theory of non-autonomous SSMs.

\subsection{Main result}

We consider the full, perturbed non-autonomous dynamical system 
\begin{equation}
\dot{x}=Ax+f_{0}(x)+\epsilon f_{1}(x,\Omega t;\epsilon),\qquad f_{0}(x)=\mathcal{O}(\left|x\right|^{2}),\quad\Omega\in\mathbb{R}^{k},\,\,k\geq1;\quad0<\epsilon\ll1.\label{eq:fullsystemagain}
\end{equation}
Our smoothness assumptions on $f_{0}$ and $f_{1}$ will be spelled
out in our main result below.

We continue to assume that the linear part of this system is asymptotically
stable, i.e., 
\begin{equation}
\mathrm{Re}\lambda_{j}<0,\quad j=1,\ldots,N.\label{eq:ass1-1}
\end{equation}
As already noted in the autonomous case, this assumption on the dissipative
nature of the system ensures that our NNM and SSM definitions indeed
capture distinguished solution sets of the nonlinear oscillatory system.
\begin{thm}
\label{theo:SSM forced}{[}\textbf{Existence, uniqueness and persistence
of non-autonomous SSM}{]} \emph{Consider a spectral subspace $E$
and assume that the low-order nonresonance conditions 
\begin{equation}
\left\langle m,\mathrm{Re}\lambda\right\rangle _{E}\neq\mathrm{Re}\lambda_{l},\quad\lambda_{l}\not\in\mathrm{Spect}(A\vert_{E}),\quad2\leq\left|m\right|\leq\Sigma(E)\label{eq:forced_nonresonance}
\end{equation}
hold for all eigenvalues $\lambda_{l}$ of $A$ that lie outside the
spectrum of $A\vert_{E}$. }
\end{thm}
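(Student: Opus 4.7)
The plan is to reduce the forced problem to the autonomous whiskered-torus setting and then apply the invariant manifold results of Haro and de la Llave \cite{haro06} in the same spirit that Theorem \ref{theo:SSM unforced} was deduced from Cabr\'e--Fontich--de la Llave \cite{cabre03}. First, I would invoke Theorem \ref{theo:forced NNM} to obtain, for $\epsilon$ small enough, a unique attracting NNM $\mathcal{N}_\epsilon = \{\epsilon\tau(\Omega_1 t,\ldots,\Omega_k t;\epsilon)\}$ of class $C^r$ whose closure is the image of an embedded invariant $k$-torus. A smooth change of variables $x \mapsto x - \epsilon\tau(\Omega t;\epsilon)$ flattens $\mathcal{N}_\epsilon$ to the zero section, and lifting time to a phase $\theta \in \mathbb{T}^k$ recasts \eqref{eq:fullsystemagain} as an autonomous system on $\mathbb{R}^N \times \mathbb{T}^k$ with an invariant $C^r$ torus $\mathcal{N}$ at the origin whose normal linearization is the constant operator $A$.

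Next, I would set up the bundle whose fibers approximate $E$ along $\mathcal{N}$: the product $\mathcal{N}\times E$ is invariant for the unperturbed flow, and by Theorem \ref{theo:SSM unforced} the autonomous problem at $\epsilon=0$ already carries a unique smoothest SSM $W(0)$ tangent to $E$. The Haro--de la Llave theorem on whiskered invariant tori provides, under appropriate spectral-gap and nonresonance hypotheses, a unique smoothest invariant subbundle $W(\mathcal{N})$ of the normal bundle $N\mathcal{N}$ that reduces to $W(0)\times\mathbb{T}^k$ as $\epsilon\to 0$ and depends on $\epsilon$ in a $C^r$ fashion. The nonresonance hypotheses are checked via the cohomological (invariance) equation for the bundle's graph $h(\xi,\theta)$ over $E\times\mathbb{T}^k$: expanding $h$ in a Fourier series in $\theta$ and a Taylor series in $\xi$, the small divisors take the form $\langle m,\lambda\rangle_E + i\langle \ell,\Omega\rangle - \lambda_l$ for $m\in\mathbb{N}^{\dim E}$, $\ell\in\mathbb{Z}^k$ and $\lambda_l\notin\mathrm{Spect}(A|_E)$. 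Taking real parts, the imaginary forcing $i\langle \ell,\Omega\rangle$ drops out, and invertibility up to order $\Sigma(E)$ reduces to precisely \eqref{eq:forced_nonresonance}.

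The reason the \emph{absolute} spectral quotient $\Sigma(E)$ replaces the relative quotient $\sigma(E)$ of the autonomous case is that the persistence argument must be uniform over the torus base: one needs a contraction on a space of sections whose norm controls derivatives along all of $N\mathcal{N}$, including the fastest-decaying directions, so the worst-case rate $\min_{\lambda\in\mathrm{Spect}(A)}\mathrm{Re}\lambda$ appears in the numerator. From here the conclusions parallel Theorem \ref{theo:SSM unforced}: (i) existence of a $C^r$ SSM $W(\mathcal{N})$ of dimension $\dim E + k$ that is $\mathcal{O}(\epsilon)$ $C^1$-close to $\mathcal{N}\times E$ and tangent to the appropriate spectral subbundle along $\mathcal{N}$; (ii) uniqueness of $W(\mathcal{N})$ among all $C^{\Sigma(E)+1}$ invariant manifolds satisfying (i) and the bundle condition in Definition \ref{def:SSM}; (iii) joint $C^r$ dependence on any additional parameters for which $f_0, f_1$ are jointly $C^r$.

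The main obstacle is the book-keeping that justifies using only real parts of eigenvalues in the nonresonance condition. Intuitively, the torus frequencies enter the cohomological equation only through the bounded phase $\theta$, so resonances with the imaginary parts $\mathrm{Im}\,\lambda_j$ are absorbed into Fourier modes rather than into unbounded growth; this is the precise mechanism by which Haro--de la Llave's formulation differs from the classical Sternberg-type linearization hypotheses. A secondary technical point is the regularity of the change of variables flattening $\mathcal{N}_\epsilon$, but this is $C^r$ by Theorem \ref{theo:forced NNM} and transparently compatible with the subsequent application of \cite{haro06}.
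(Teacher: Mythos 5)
Your overall route is the same as the paper's: pass to the extended autonomous system on $\mathbb{R}^{N}\times\mathbb{T}^{k}$, apply the whiskered-torus theorem of Haro and de la Llave to the time-one (quasiperiodic) map, and observe that the nonresonance conditions can only involve the real parts of the eigenvalues because the relevant spectral object is rotationally symmetric --- in the paper's language, the spectrum of the transfer operator $\mathcal{T}_{\Omega}$ is $\{e^{\lambda_{j}-i\left\langle m,\Omega\right\rangle }\}$ and its annular hull is a union of circles of radii $e^{\mathrm{Re}\lambda_{j}}$; in your language, the Fourier index $\ell$ is unconstrained so only the real part of the small divisor can be kept away from zero. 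Your heuristic for why the absolute quotient $\Sigma(E)$ replaces $\sigma(E)$ is also consistent with the formal reason: the rate condition in the Haro--de la Llave theorem involves the annular hull of the inverse of the \emph{full} transfer operator, not merely its restriction to the complementary subbundle, so $\min_{\lambda\in\mathrm{Spect}(A)}\mathrm{Re}\lambda$ enters the numerator.

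There is, however, one step in your write-up that does not survive scrutiny as stated. After subtracting the NNM via $x\mapsto x-\epsilon\tau(\Omega t;\epsilon)$, the normal linearization along the flattened torus is \emph{not} the constant operator $A$: it is $A+Df_{0}(\epsilon\tau(\Omega t;\epsilon))+\epsilon D_{x}f_{1}(\epsilon\tau,\Omega t;\epsilon)=A+\mathcal{O}(\epsilon)$, a quasiperiodically time-dependent operator whose transfer-operator spectrum is not explicitly computable. If you apply the Haro--de la Llave theorem directly to the perturbed torus, as you propose, you must either verify its spectral hypotheses for this $\mathcal{O}(\epsilon)$-perturbed operator (which you cannot do exactly) or argue that the annular-hull gap and nonresonance conditions are open and hence survive the perturbation. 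The paper sidesteps this by applying the theorem at $\epsilon=0$, where $M(\phi)=e^{A}$ exactly and the spectrum \eqref{eq:transfop_spectrum} is explicit, and then invoking the persistence clause (statement (v)) of the Haro--de la Llave theorem to carry the SSM over to the perturbed torus $\mathcal{K}_{\epsilon}$; this is also why the paper emphasizes that for $\epsilon>0$ the exact nonresonance conditions are only known to be $\mathcal{O}(\epsilon)$-close to those in Table 2. Your argument is repairable along exactly these lines, but as written the claim that the linearization along $\mathcal{N}_{\epsilon}$ is the constant operator $A$ is false and conceals the one place where the perturbative structure of the problem actually matters.
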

\emph{Then the following hold:}
\begin{description}
\item [{\emph{(i)}}] \emph{There exists an SSM, $W(x_{\epsilon}(t))$that
is of class $C^{\Sigma(E)+1}$in the variable $x$. }For any fixed
time $t_{0}$, the time slice $W(x_{\epsilon}(t_{0}))$ of the SSM
is $\mathcal{O}(\epsilon)$ $C^{r}$-close to\emph{ $E$ along the
quasiperiodic NNM, $x_{\epsilon}(t)=\epsilon\tau(\Omega t;\epsilon)$.
Furthermore, $\dim W(x_{\epsilon}(t))=\dim E+k$.}
\item [{\emph{(ii)}}] \emph{$W(x_{\epsilon}(t))$is unique among all invariant
manifolds that satisfy the properties listed in (i) and are at least
of class $C^{\Sigma(E)+1}$ with respect to the $x$ variable along
the NNM $x_{\epsilon}(t)$, }
\item [{(iii)}] If the functions $f_{0}$ and $f_{1}$ are $C^{\infty}$
or analytic, then $W(x_{\epsilon}(t))$ will depend on $\epsilon$
in a $C^{\infty}$ or analytic fashion, respectively.\end{description}
\begin{proof}
The results can be deduced from a more general result of Haro and
de la Llave \cite{haro06}, as we show in Appendix \ref{sub:proof SSM forced}.
\end{proof}
According to Theorem \ref{theo:SSM forced}, under the appropriate
nonresonance conditions between the modes in the spectral subspace
$E$ and those outside $E$, a well-defined periodic or quasiperiodic
SSM attached to a periodic or quasiperiodic NNM exists. This gives
precise mathematical conditions for the existence and uniqueness of
the invariant surfaces envisioned by Jiang, Pierre and Shaw \cite{jiang05}
for the time-periodic case, and extends their existence to the case
of quasiperiodic forcing. The SSMs obtained in this fashion are unique
among invariant surfaces that are at least $\Sigma(E)+1$-times continuously
differentiable in the $x$ direction along the NNM.

\subsection{Applications to specific spectral subspaces}

We again consider a select group of $q$ master modes of the linearized
system with 
\begin{equation}
\mathrm{Re}\lambda_{j_{1}}\leq\ldots\mathrm{\leq Re}\lambda_{j_{q}}<0,\label{eq:q eigenvalues ordering-1}
\end{equation}
 and with the remaining modes ordered as 
\[
\mathrm{Re}\lambda_{j_{q+1}}\leq\ldots\mathrm{\leq Re}\lambda_{j_{N}}<0.
\]

In analogy with the autonomous case, we distinguish three types of
non-autonomous SSMs in our discussion (cf. Fig. \ref{fig:fast_slow_subspaces}):
\begin{itemize}
\item \emph{A fast spectral submanifold} (\emph{fast SSM})\emph{,} $W_{N-q+1,\ldots,N}(x_{\epsilon}(t))$,
is an SSM in the sense of Definition \ref{def:SSM}, with the underlying
spectral subspace chosen as $E_{N-q+1,\ldots,N}$, the subspace of
the $q<N$ fastest decaying modes of the linearized system. The SSM
$W_{N-q+1,\ldots,N}(x_{\epsilon}(t))$ is time-periodic if either
$k=1$ or the elements of the frequency $\Omega$ are rationally commensurate
for $k>1$. In all cases, $W_{N-q+1,\ldots,N}(x_{\epsilon}(t))$ is
a surface in which trajectories are asymptotic to the nontrivial NNM,
$x_{\epsilon}(\Omega t)$.
\item \emph{An intermediate spectral submanifold (intermediate SSM),} $W_{j_{1},\ldots,j_{q}}(x_{\epsilon}(t))$,
is an SSM in the sense of Definition \ref{def:SSM}, serving as the
nonlinear continuation of 
\begin{equation}
E_{j_{1},\ldots,j_{q}}=E_{j_{1}}\oplus E_{j_{2}}\oplus\ldots\oplus E_{j_{q}}\label{eq:intermediate SSM-1}
\end{equation}
for a general choice of the $q<N$ eigenspaces $E_{j_{1}},\ldots,E_{j_{q}}$.
Trajectories in $W_{j_{1},\ldots,j_{q}}(x_{\epsilon}(t))$ are asymptotic
to the nontrivial NNM, $x_{\epsilon}(t)$.
\item \emph{A slow spectral submanifold (slow SSM),} $W_{1,\ldots,q}(x_{\epsilon}(t))$,
is an SSM in the sense of Definition \ref{def:SSM}, with the underlying
spectral subspace chosen as $E_{1,\ldots,q}$, the subspace of the
$q<N$ slowest decaying modes of the linearized system. Again, trajectories
in $W_{1,\ldots,q}(x_{\epsilon}(t))$ are asymptotic to the nontrivial
NNM, $x_{\epsilon}(t)$.
\end{itemize}
Table 2 summarizes the relevant absolute spectral quotients and nonresonance
conditions, as deduced from Theorem \ref{theo:SSM forced}, for specific
choices of the spectral subspace $E$.

\begin{center}
\begin{tabular}{|c|c|c|c|}
\hline 
 & \textbf{\small{}Fast SSM} & \textbf{\small{}Intermediate SSM} & \textbf{\small{}Slow SSM}\tabularnewline
\hline 
\hline 
{\footnotesize{}$E$ } & {\footnotesize{}$E_{N-q+1,\ldots,N}$} & {\footnotesize{}$E_{j_{1},\ldots,j_{q}}$} & {\footnotesize{}$E_{1,\ldots,q}$}\tabularnewline
\hline 
{\footnotesize{}$\Sigma(E)$} & {\footnotesize{}$\mathrm{Int}\left[\mathrm{Re}\lambda_{N}/\mathrm{Re}\lambda_{N-q+1}\right]$} & {\footnotesize{}$\mathrm{Int}\left[\mathrm{Re}\lambda_{N}/\mathrm{Re}\lambda_{j_{q}}\right]$} & {\footnotesize{}$\mathrm{Int}\left[\mathrm{Re}\lambda_{N}/\mathrm{Re}\lambda_{1}\right]$}\tabularnewline
\hline 
\multirow{2}{*}{{\footnotesize{}Nonresonance: }} & {\footnotesize{}$\sum_{i=N-q+1}^{N}a_{i}\mathrm{Re}\lambda_{i}\neq\mathrm{Re}\lambda_{l}$} & {\footnotesize{}$\sum_{i=1}^{q}a_{i}\mathrm{Re}\lambda_{j_{i}}\neq\mathrm{Re}\lambda_{l}$} & $\sum_{i=1}^{q}a_{i}\mathrm{Re}\lambda_{i}\neq\mathrm{Re}\lambda_{l}$\tabularnewline
\cline{2-4} 
 & {\footnotesize{}$\left|a\right|\in[2,\Sigma(E)],\,\,\,\,l\in\left[1,N-q\right]$ } & {\footnotesize{}$\left|a\right|\in[2,\Sigma(E)],\,\,\,\,l\in\left[j_{q+1},j_{N}\right]$ } & {\footnotesize{}$\left|a\right|\in[2,\Sigma(E)],\,\,\,\,l\in\left[q+1,N\right]$ }\tabularnewline
\hline 
{\footnotesize{}$W(x_{\epsilon}(t))$} & {\footnotesize{}$W_{N-q+1,\ldots,N}(x_{\epsilon}(t))$} & {\footnotesize{}$W_{j_{1},\ldots,j_{q}}(x_{\epsilon}(t))$} & {\footnotesize{}$W_{1,\ldots,q}(x_{\epsilon}(t))$}\tabularnewline
\hline 
\end{tabular}
\par\end{center}

\begin{center}
Table 2: Conditions for different types of non-autonomous SSMs appearing
in Theorem \ref{theo:SSM forced}, with parameters $a\in\mathbb{N}^{q}$
and $l\in\mathbb{N}$.
\par\end{center}

Much of our general discussion after Theorem \ref{theo:SSM unforced}
on the various choices of $E$ remains valid in the present non-autonomous
context, with two main differences. First, even the existence of fast
SSMs now requires a low-order non-resonance condition (cf. the first
column of Table 2). Accordingly, a non-autonomous fast SSM is only
guaranteed to be unique among at least $C^{\Sigma(E)+1}$ smooth invariant
manifolds. Second, Table 2 only requires the real parts of the eigenvalues
inside $E$ to be in non-resonance with the real parts of those outside
$E$. Resonances, therefore, occur with a larger likelihood than those
listed for the autonomous case in Table 1, since they now only involve
a condition on the real parts of the eigenvalues. 

Being as far as possible from resonances is also more important here
than in the autonomous case, as the exact nonresonance condition ensuring
the convergence of the Taylor approximation for non-autonomous SSMs
is not explicitly known. Rather, this condition is only known to be
$\mathcal{O}(\epsilon)$ close to that listed in the appropriate column
of Table 2. This is because the spectrum of the infinite-dimensional
transfer operator arising in the proof of the Theorem \ref{theo:SSM forced}
is generally only computable for $\epsilon=0$, giving the nonresonance
conditions listed in Table 2 (see Appendix \ref{sub:proof SSM forced}).

As in the autonomous case, one might ask if the existence of SSMs
guaranteed by Theorem \ref{theo:SSM forced} could also be deduced
directly from more classical dynamical systems results (cf. our related
discussion in Appendices \ref{sub:NHIM proof unforced} and \ref{sub:Poincare proof unforced}
for the autonomous case). It turns out that the shortcomings of Fenichel's
invariant manifold theorem would be the same as in the autonomous
case, while the non-autonomous extensions of Poincare's analytic linearization
theorem would be even more restrictive than in the autonomous case
(cf. Appendices \ref{sub:NHIM proof forced} and \ref{sub:Poincare proof forced}
for details.)
\begin{example}
{[}\emph{Periodic SSM from the application of Theorem }\ref{theo:SSM forced}{]}
Consider a periodically forced version of Example 1, given by 
\begin{eqnarray}
\dot{x} & = & -x,\nonumber \\
\dot{y} & = & -\sqrt{24}y+x^{2}+x^{3}+x^{4}+x^{5}+\epsilon\sin\Omega_{1}t,\label{eq:resonant example-1-1}
\end{eqnarray}
 with $\Omega_{1}=1.$ This system is analytic in all variables, and
hence we again have $r=a$ in the notation of Theorem \ref{theo:unforced NNM}.
The same non-resonance conditions are satisfied as in Example 1. Therefore,
Theorem \ref{theo:SSM forced} guarantees the existence of an analytic
(i.e, class $C^{a})$ slow SSM, $W_{1}(x_{\epsilon}(t))$ that is
unique among all class $C^{5}$ (in $x$) invariant manifolds tangent
to the horizontal axis along the NNM. Near the origin, this slow SSM
is guaranteed to be of the form 
\begin{equation}
y=h(x,t)=a_{0}(t)+a_{2}(t)x^{2}+a_{3}(t)x^{3}+a_{4}(t)x^{4}+a_{5}(t)x^{5}+\ldots,\qquad a_{j}(t+2\pi)=a_{j}(t).\label{eq:analyticSSM-1}
\end{equation}
This is the minimal Taylor expansion that only exists for the analytic
SSM but not for the other invariant manifolds tangent to the slow
subbundle along the NNM. Differentiation of \eqref{eq:analyticSSM-1}
in time gives
\begin{eqnarray}
\dot{y} & = & \dot{a}_{0}+\left[2a_{2}x+3a_{3}x^{2}+4a_{4}x^{3}+5a_{5}x^{4}+\mathcal{O}(x^{5})\right]\dot{x}+\dot{a}_{2}x^{2}+\dot{a}_{3}x^{3}+\dot{a}_{4}x^{4}+\dot{a}_{5}x^{5}+\mathcal{O}(x^{6})\nonumber \\
 & = & \left(\dot{a}_{2}-2a_{2}\right)x^{2}+\left(\dot{a}_{3}-3a_{3}\right)x^{3}+\left(\dot{a}_{4}-4a_{4}\right)x^{4}+\left(\dot{a}_{5}-5a_{5}\right)x^{5}+\mathcal{O}(x^{6}),\label{eq:resonant eq ydot1-1-1}
\end{eqnarray}
while substitution of \eqref{eq:analyticSSM-1} into the second equation
in \eqref{eq:resonant example-1-1} gives
\begin{equation}
\dot{y}=-\sqrt{24}a_{0}+\epsilon\sin t+\left(1-\sqrt{24}a_{2}\right)x^{2}+\left(1-\sqrt{24}a_{3}\right)x^{3}+\left(1-\sqrt{24}a_{4}\right)x^{4}+\left(1-\sqrt{24}a_{5}\right)x^{5}+\mathcal{O}(x^{6}).\label{eq:resonant eq ydot2-1-1}
\end{equation}
Equating \eqref{eq:resonant eq ydot1-1-1} and \eqref{eq:resonant eq ydot2-1-1}
gives
\begin{equation}
\dot{a}_{0}=-\sqrt{24}a_{0}+\epsilon\sin t,\quad\dot{a}_{j}=\left(j-\sqrt{24}\right)a_{j}+1,\qquad j=2,3,4,5,\quad\dot{a}_{k}=\left(j-\sqrt{24}\right)a_{k},\quad k\geq6.\label{eq:slowmanifold coefficients-1}
\end{equation}
 The requirement of $2\pi$-periodicity on $a_{i}(t)$ given in \eqref{eq:analyticSSM-1}
defines a boundary-value problem for the ODEs in \eqref{eq:slowmanifold coefficients-1},
whose unique solutions are
\[
a_{0}(t)=\epsilon\frac{\sqrt{24}}{25}\left(\sin t-\frac{1}{\sqrt{24}}\cos t\right),\quad a_{j}(t)\equiv\frac{1}{\sqrt{24}-j},\qquad j=2,3,4,5,\quad a_{k}(t)\equiv0,\quad k\geq6
\]
Just as in Example 1, the ODE \eqref{eq:resonant example-1-1} is
explicitly solvable: a direct integration gives $x(t)$ which, upon
substitution into the $y$ equation, yields an inhomogeneous linear
ODE for $y(t).$ Combining the expressions for $x(t)$ and $y(t)$
gives the solutions in the form 
\begin{eqnarray}
y(x;x_{0},y_{0},t) & = & K(x_{0},y_{0})x^{\sqrt{24}}+\frac{x^{2}}{\sqrt{24}-2}+\frac{x^{3}}{\sqrt{24}-3}+\frac{x^{4}}{\sqrt{24}-4}+\frac{x^{5}}{\sqrt{24}-5}\label{eq:y-x-periodic example}\\
 &  & +\epsilon\frac{\sqrt{24}}{25}\left[\sin t-\frac{1}{\sqrt{24}}\cos t\right],\nonumber \\
K(x_{0},y_{0},t_{0}) & = & \frac{y_{0}-\epsilon\frac{\sqrt{24}}{25}\left(\sin t_{0}-\frac{1}{\sqrt{24}}\cos t_{0}\right)}{x_{0}^{\sqrt{24}}}-\frac{x_{0}^{2-\sqrt{24}}}{\sqrt{24}-2}-\frac{x_{0}^{3-\sqrt{24}}}{\sqrt{24}-3}-\frac{x_{0}^{4-\sqrt{24}}}{\sqrt{24}-4}-\frac{x_{0}^{5-\sqrt{24}}}{\sqrt{24}-5},\nonumber 
\end{eqnarray}
with $(x_{0},y_{0})$ denoting an arbitrary initial conditions for
the solution at the initial time $t_{0}$.

This confirms the existence of a unique periodic NNM guaranteed by
Theorem \ref{theo:forced NNM}. Specifically, 
\[
\left(\begin{array}{c}
x_{\epsilon}(t)\\
y_{\epsilon}(t)
\end{array}\right)=\epsilon\tau(\Omega_{1}t;\epsilon)=\epsilon\left(\begin{array}{c}
0\\
\frac{\sqrt{24}}{25}\left[\sin t-\frac{1}{\sqrt{24}}\cos t\right]
\end{array}\right)
\]
attracts all solutions from SSM, which can be seen with $x=x_{0}e^{-t}$
substituted into \eqref{eq:y-x-periodic example}. The graph $y(\,\cdot\,;x_{0},y_{0};t)$
is a time-dependent representation of all invariant manifolds tangent
to the slow subbundle of this NNM, which is parallel to the $x$ axis.
As in the autonomous case, these invariant manifolds are generally
only of class $C^{4}$, because the term $K(x_{0},y_{0},t_{0})x^{\sqrt{24}}$
admits only four continuous derivatives along the NNM (which satisfies
$x\equiv0$). The only exception is the case $K(x_{0},y_{0})=0,$
for which $y(\,\cdot\,;x_{0},y_{0})$ becomes a quintic polynomial
in $x$ plus sine and cosine functions of $t$, all which are analytic.
But $K(x_{0},y_{0})=0$ holds only along the points 
\begin{equation}
y_{0}(x_{0},t_{0})=\frac{x_{0}^{2}}{\sqrt{24}-2}+\frac{x_{0}^{3}}{\sqrt{24}-3}+\frac{x_{0}^{4}}{\sqrt{24}-4}+\frac{x_{0}^{5}}{\sqrt{24}-5}+\epsilon\frac{\sqrt{24}}{25}\left(\sin t_{0}-\frac{1}{\sqrt{24}}\cos t_{0}\right),\label{eq:SSMexample-1}
\end{equation}
which lie precisely on the $W_{1}(\tau_{\epsilon}(t_{0}))$ slice
(or fiber) of the SSM, $W_{1}(\tau_{\epsilon}(t))$, whose Taylor
expansion we computed in \eqref{eq:slowmanifold coefficients-1}.
We show the unique analytic SSM for this example in Fig. \ref{fig:resonant_periodic}.
\begin{figure}[H]
\centering{}\includegraphics[width=0.9\textwidth]{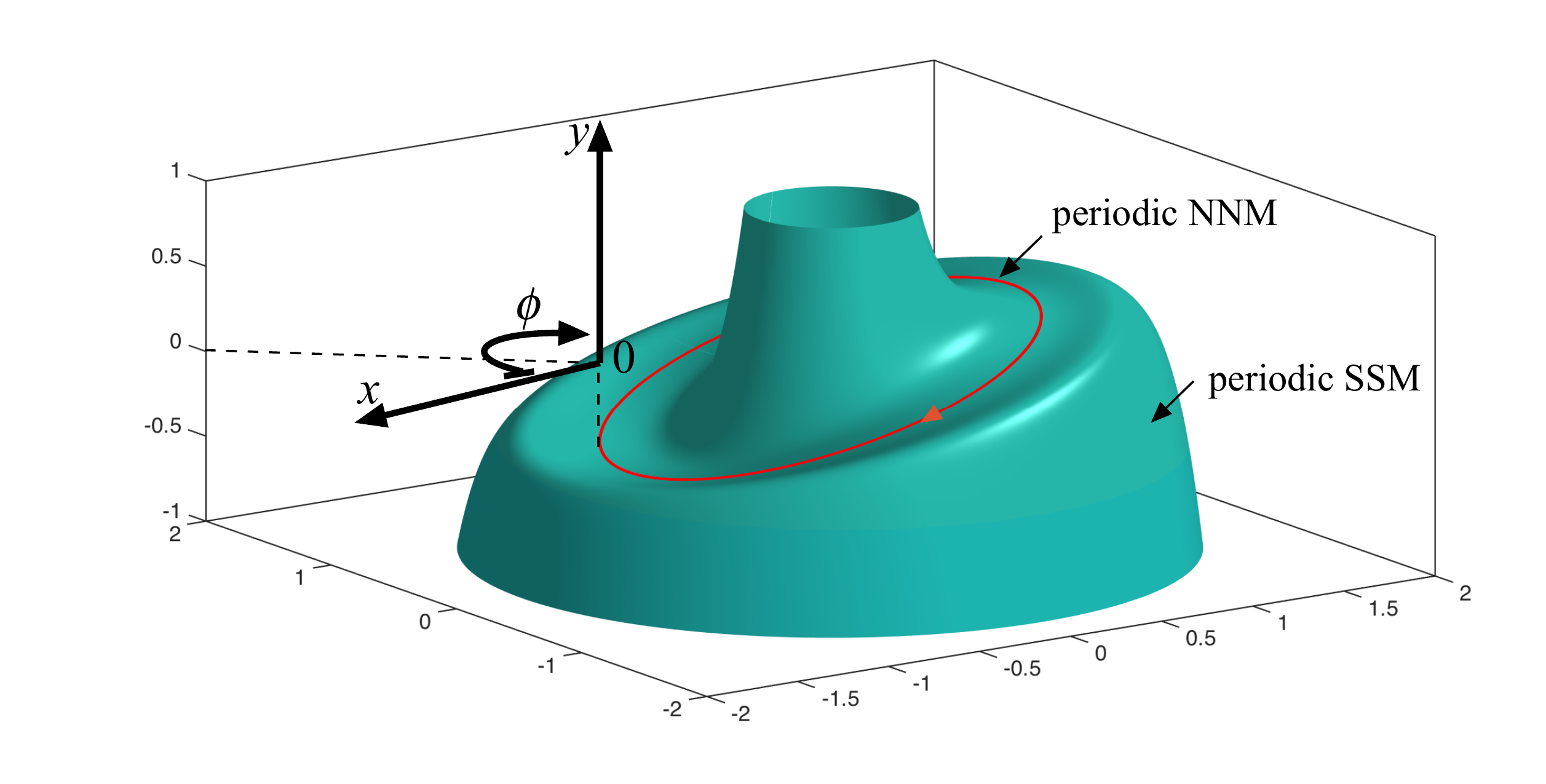}\caption{Phase portrait of system \eqref{eq:resonant example-1-1} in the extended
phase space of $(x,y,\phi)$, where $\phi=t\mod2\pi$. The green surface
is the unique analytic SSM guaranteed by Theorem \ref{theo:SSM forced},
emanating from the unique NNM (red) guaranteed by Theorem \ref{theo:forced NNM}.
The forcing parameter is selected as $\epsilon=2.$\label{fig:resonant_periodic}}
\end{figure}

\end{example}

\begin{example}
{[}\emph{Quasiperiodic SSM from the application of Theorem }\ref{theo:SSM forced}{]}
Consider the system 
\begin{eqnarray}
\dot{x} & = & -x,\nonumber \\
\dot{y} & = & -\sqrt{24}y+x^{2}+x^{3}+x^{4}+x^{5}+\epsilon(\sin\Omega_{1}t+\sin\Omega_{2}t),\label{eq:resonant example-1-1-1}
\end{eqnarray}
with $\Omega_{1}=1$ and $\Omega_{2}=\sqrt{2}$. This is just the
quasi-periodically forced version of Example 3. Based on the same
reasoning as in that example, we conclude from Theorem \ref{theo:SSM forced}
the existence of a unique quasiperiodic SSM in the form 
\begin{eqnarray}
y & = & h(x,\phi_{1},\phi_{2})=a_{0}(\phi_{1},\phi_{2})+a_{2}(\phi_{1},\phi_{2})x^{2}+a_{3}(\phi_{1},\phi_{2})x^{3}\label{eq:analyticSSM-1-1}\\
 &  & +a_{4}(\phi_{1},\phi_{2})x^{4}+a_{5}(\phi_{1},\phi_{2})x^{5}+\ldots,\nonumber \\
a_{j}(\phi_{1},\phi_{2}) & = & a_{j}(\phi_{1}+2\pi/\Omega_{1},\phi_{2}),\qquad a_{j}(\phi_{1},\phi_{2})=a_{j}(\phi_{1},\phi_{2}+2\pi/\sqrt{2}),\nonumber 
\end{eqnarray}
with the phase variables satisfying $\dot{\phi}_{1}=1,$ $\dot{\phi}_{2}=\sqrt{2}$.
Differentiation of \eqref{eq:analyticSSM-1-1} in time gives
\begin{eqnarray}
\dot{y} & = & \dot{a}_{0}+\left[2a_{2}x+3a_{3}x^{2}+4a_{4}x^{3}+5a_{5}x^{4}+\mathcal{O}(x^{5})\right]\dot{x}+\dot{a}_{2}x^{2}+\dot{a}_{3}x^{3}+\dot{a}_{4}x^{4}+\dot{a}_{5}x^{5}+\mathcal{O}(x^{6})\nonumber \\
 & = & \left(\dot{a}_{2}-2a_{2}\right)x^{2}+\left(\dot{a}_{3}-3a_{3}\right)x^{3}+\left(\dot{a}_{4}-4a_{4}\right)x^{4}+\left(\dot{a}_{5}-5a_{5}\right)x^{5}+\mathcal{O}(x^{6})\label{eq:resonant eq ydot1-1-1-1}
\end{eqnarray}
while substitution of \eqref{eq:analyticSSM-1-1} into the second
equation in \eqref{eq:resonant example-1-1-1} gives
\begin{eqnarray}
\dot{y} & = & -\sqrt{24}a_{0}+\epsilon\left(\sin t+\sin\sqrt{2}t\right)+\left(1-\sqrt{24}a_{2}\right)x^{2}+\left(1-\sqrt{24}a_{3}\right)x^{3}\label{eq:resonant eq ydot2-1-1-1}\\
 &  & +\left(1-\sqrt{24}a_{4}\right)x^{4}+\left(1-\sqrt{24}a_{5}\right)x^{5}+\mathcal{O}(x^{6}).
\end{eqnarray}
Equating \eqref{eq:resonant eq ydot1-1-1-1} and \eqref{eq:resonant eq ydot2-1-1-1}
gives
\begin{eqnarray}
\dot{a}_{0} & = & -\sqrt{24}a_{0}+\epsilon\left(\sin t+\sin\sqrt{2}t\right),\quad\dot{a}_{j}=\left(j-\sqrt{24}\right)a_{j}+1,\quad j\in[2,5],\quad\dot{a}_{k}=\left(j-\sqrt{24}\right)a_{k},\nonumber \\
\dot{a}_{k} & = & \left(j-\sqrt{24}\right)a_{k},\quad k\geq6.\label{eq:slowmanifold coefficients-1-1}
\end{eqnarray}
The quasi-periodicity requirements on $(\phi_{1},\phi_{2})$ given
in \eqref{eq:analyticSSM-1-1} define a boundary-value problems for
the PDEs in \eqref{eq:slowmanifold coefficients-1-1}, whose unique
solutions are
\begin{eqnarray*}
a_{0}(\phi_{1},\phi_{2}) & = & \epsilon\left(\frac{\sqrt{24}}{25}\left[\sin\phi_{1}-\frac{1}{\sqrt{24}}\cos\phi_{1}\right]+\frac{\sqrt{24}}{26}\left[\sin\phi_{2}-\frac{\sqrt{2}}{\sqrt{24}}\cos\phi_{2}\right]\right),\\
a_{j}(\phi_{1},\phi_{2}) & \equiv & \frac{1}{\sqrt{24}-j},\quad j=2,3,4,5,\\
a_{k}(\phi_{1},\phi_{2}) & \equiv & 0,\quad k\geq6.
\end{eqnarray*}
At the same time, just as in Example 3, the ODE \eqref{eq:resonant example-1-1-1}
is explicitly solvable: a direct integration gives $x(t)$ which,
upon substitution into the $y$ equation, yields an inhomogeneous
linear ODE for $y(t).$ Combining the expressions for $x(t)$ and
$y(t)$ gives the solutions in the form 
\begin{eqnarray*}
y(x;x_{0},y_{0},t) & = & K(x_{0},y_{0})x^{\sqrt{24}}+\frac{x^{2}}{\sqrt{24}-2}+\frac{x^{3}}{\sqrt{24}-3}+\frac{x^{4}}{\sqrt{24}-4}+\frac{x^{5}}{\sqrt{24}-5}\\
 &  & +\epsilon\left(\frac{\sqrt{24}}{25}\left[\sin t-\frac{1}{\sqrt{24}}\cos t\right]+\frac{\sqrt{24}}{26}\left[\sin\sqrt{2}t-\frac{\sqrt{2}}{\sqrt{24}}\cos\sqrt{2}t\right]\right),\\
K(x_{0},y_{0},t_{0}) & = & \frac{y_{0}-\epsilon\left(\frac{\sqrt{24}}{25}\left[\sin t_{0}-\frac{1}{\sqrt{24}}\cos t_{0}\right]+\frac{\sqrt{24}}{26}\left[\sin\sqrt{2}t_{0}-\frac{\sqrt{2}}{\sqrt{24}}\cos\sqrt{2}t_{0}\right]\right)}{x_{0}^{\sqrt{24}}}\\
 &  & -\frac{x_{0}^{2-\sqrt{24}}}{\sqrt{24}-2}-\frac{x_{0}^{3-\sqrt{24}}}{\sqrt{24}-3}-\frac{x_{0}^{4-\sqrt{24}}}{\sqrt{24}-4}-\frac{x_{0}^{5-\sqrt{24}}}{\sqrt{24}-5},
\end{eqnarray*}
with $(x_{0},y_{0})$ denoting an arbitrary initial condition for
the solution at the initial time $t_{0}$.

Again, all these solutions decay exponentially to a unique quasiperiodic
NNM given by
\[
\left(\begin{array}{c}
x_{\epsilon}(t)\\
y_{\epsilon}(t)
\end{array}\right)=\epsilon\tau(\Omega_{1},\Omega_{2}t;\epsilon)=\epsilon\left(\begin{array}{c}
0\\
\frac{\sqrt{24}}{25}\left[\sin t-\frac{1}{\sqrt{24}}\cos t\right]+\frac{\sqrt{24}}{26}\left[\sin\sqrt{2}t-\frac{\sqrt{2}}{\sqrt{24}}\cos\sqrt{2}t\right]
\end{array}\right).
\]
The graph $y(\,\cdot\,;x_{0},y_{0};t)$ is a time-dependent representation
of all invariant manifolds tangent to the slow subbundle of this NNM.
Any $t=t_{0}$ slice of this subbundle is parallel to the $x$ axis,
i.e., to the spectral subspace $E_{1}$. As in the autonomous case,
these invariant manifolds are generally only of class $C^{4}$, because
the term $K(x_{0},y_{0},t_{0})x^{\sqrt{24}}$ admits only four continuous
derivatives along the NNM (which satisfies $x\equiv0$). The only
exception is the case $K(x_{0},y_{0})=0,$ for which $y(\,\cdot\,;x_{0},y_{0})$
becomes a quintic polynomial in $x$ plus sine and cosine functions
of $t$, all which are analytic. But $K(x_{0},y_{0})=0$ holds only
along the points 
\begin{eqnarray}
y_{0}(x_{0},t_{0}) & = & \frac{x_{0}^{2}}{\sqrt{24}-2}+\frac{x_{0}^{3}}{\sqrt{24}-3}+\frac{x_{0}^{4}}{\sqrt{24}-4}+\frac{x_{0}^{5}}{\sqrt{24}-5}\label{eq:SSMexample-1-1}\\
 &  & +\epsilon\left(\frac{\sqrt{24}}{25}\left[\sin t_{0}-\frac{1}{\sqrt{24}}\cos t_{0}\right]+\frac{\sqrt{24}}{26}\left[\sin\sqrt{2}t_{0}-\frac{\sqrt{2}}{\sqrt{24}}\cos\sqrt{2}t_{0}\right]\right),\nonumber 
\end{eqnarray}
which lie precisely on the $t=t_{0}$ slice (fiber) of the SSM, $W_{1}(x_{\epsilon}(t))$
whose Taylor expansion we computed in \eqref{eq:slowmanifold coefficients-1-1}.
We show the unique analytic SSM for this example in Fig. \ref{fig:resonant_quasiperiodic}.
\begin{figure}[H]
\centering{}\includegraphics[width=0.9\textwidth]{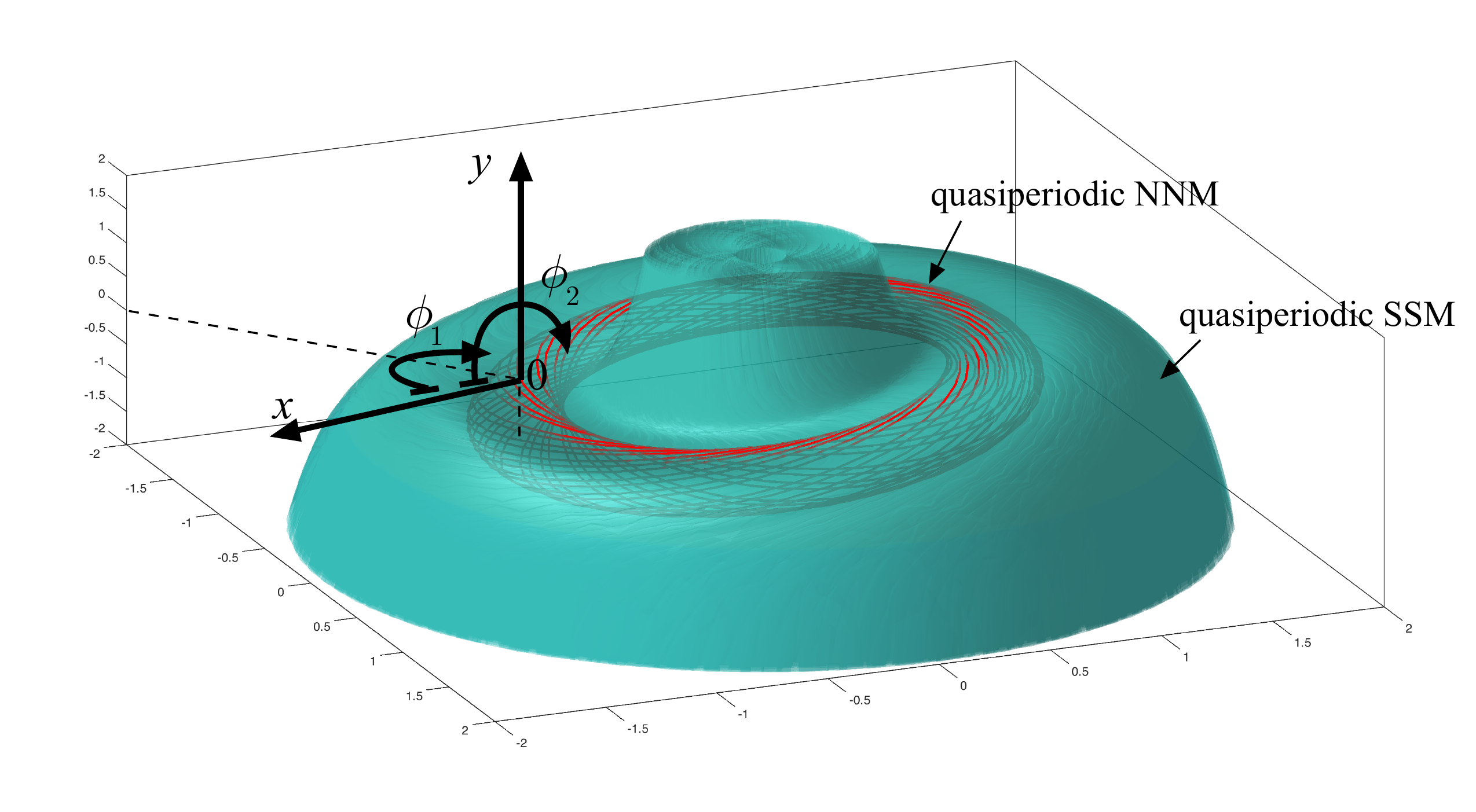}\caption{A projection of system \eqref{eq:resonant example-1-1-1} from the
extended phase space of $(x,y,\phi_{1},\phi_{2})$, where $\phi_{1}=\left(\phi_{10}+\Omega_{1}t\right)\mod2\pi$
and $\phi_{2}=\left(\phi_{20}+\Omega_{2}t\right)\mod2\pi/\sqrt{2}$.
The green surface is the unique analytic, quasiperiodic SSM guaranteed
by Theorem \ref{theo:SSM forced}, emanating from the unique quasiperiodic
NNM (red) guaranteed by Theorem \ref{theo:forced NNM}. The forcing
parameter is selected as $\epsilon=2.$ The specific projection used
in this visualization is $(x,\phi_{1},\phi_{2},y)\protect\mapsto\left(x+0.2\cos\phi_{2}+1)\cos\phi_{1},(x+0.2\cos\phi_{2}+1)\sin\phi_{1},y(x,\phi_{1},\phi_{2})\right).$
\label{fig:resonant_quasiperiodic}}
\end{figure}

\end{example}

\begin{example}
\label{ex:shaw-pierre2}{[}\emph{Illustration of Theorem \ref{theo:SSM forced}
on a mechanical example}{]} As a last example, we reconsider here
Example \ref{ex:shaw-pierre1} with time-dependent forcing. First,
we illustrate the application of Theorem \ref{theo:SSM forced} to
the general case of quasiperiodic forcing. Next, we restrict the forcing
to be periodic and compute the periodic NNM and slow periodic SSM
guaranteed by our results for this case. 

Fig. \ref{fig:shaw-pierre-model2} shows the two-degree-of freedom
system already featured in Fig. \ref{fig:shaw-pierre-model1}, but
now with multi-frequency parametric forcing 
\begin{equation}
\epsilon F_{1}=\epsilon\left(\begin{array}{c}
F_{11}(\Omega_{1}t,\ldots,\Omega_{k}t)\\
F_{12}(\Omega_{1}t,\ldots,\Omega_{k}t)
\end{array}\right)\label{eq:PS forcing}
\end{equation}
acting on both masses, with $k\geq1$ arbitrary frequencies All other
details remain the same as in Example \ref{ex:shaw-pierre1}.
\begin{figure}[H]
\begin{centering}
\includegraphics[width=0.6\textwidth]{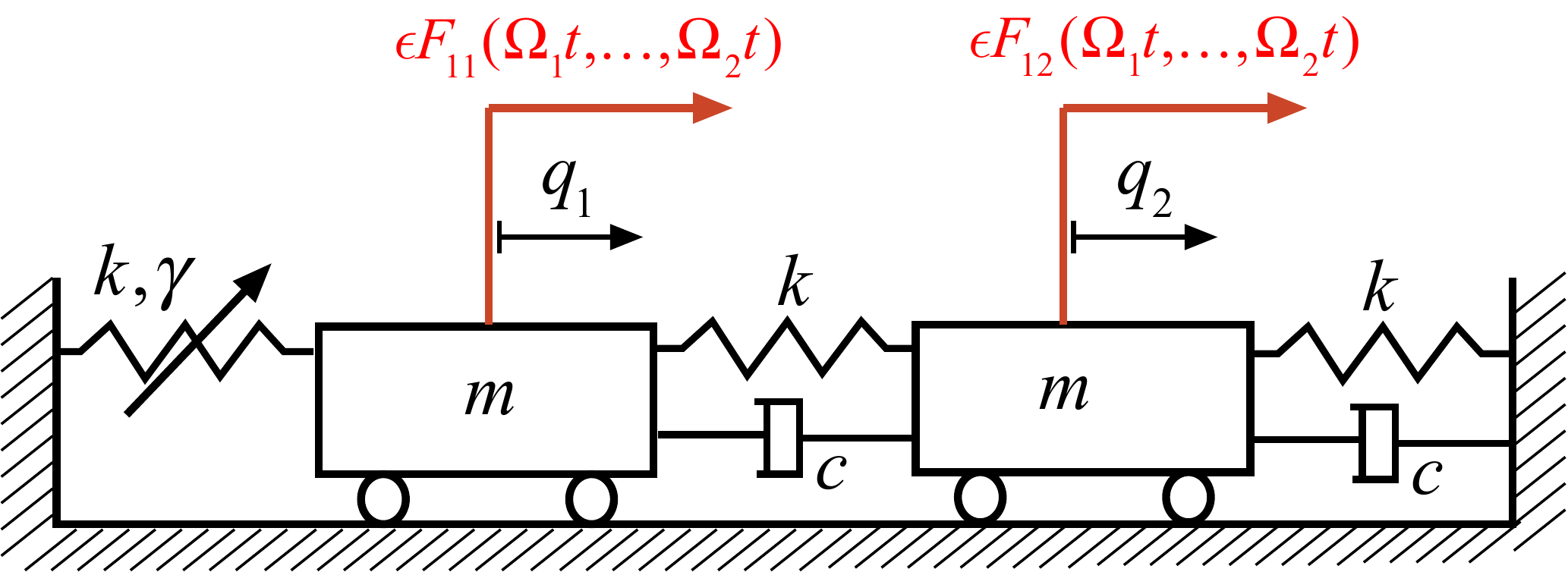}
\par\end{centering}

\caption{The quasiperiodically forced version of Example \ref{ex:shaw-pierre1}.\label{fig:shaw-pierre-model2}}
\end{figure}
The eigenvalues of the linearized, unforced system are again those
listed in \eqref{eq:SP eigenvalues}, yielding the absolute spectral
quotients 
\[
\Sigma(E_{1})=\mathrm{Int}\text{\ensuremath{\left[\frac{\mathrm{Re}\,\lambda_{2}}{\mathrm{Re}\,\lambda_{1}}\right]\emph{=5}},\quad\ensuremath{\Sigma}(\emph{\ensuremath{E_{2}}})=\ensuremath{\mathrm{Int}\text{\ensuremath{\left[\frac{\mathrm{Re}\,\lambda_{2}}{\mathrm{Re}\,\lambda_{2}}\right]}=1. }} }
\]
 By Table 2, the relevant nonresonance condition for the slow non-autonomous
SSM is
\[
-0.0741a_{1}\neq-0.3759,\quad a_{1}=2,3,4,5,
\]
which is very close to being satisfied for $a_{1}=5$. This means
that the existence of a non-autonomous SSM can only be concluded from
Theorem \ref{theo:SSM forced} for very small values of $\epsilon$.
Whenever it exists, the slow SSM is still analytic in the positions
and velocities, and unique among invariant manifolds that are at least
of class $C^{6}$ in these variables. The dependence of this uniqueness
class on the parameters is identical to that shown in Fig. \ref{fig:required order of expansion for SP1}.
As for the fast SSM, Table 2 shows that no non-resonance conditions
are required, because $\Sigma(E_{2})<2.$ This time, however, the
fast SSM can only be concluded to be unique in the function class
$C^{2}$, given that $\Sigma(E_{2})=1$. 

For simplicity, we now restrict our discussion to time-periodic forcing
by selecting the forcing terms \eqref{eq:PS forcing} as 
\end{example}
\[
\epsilon F_{1}=\left(\begin{array}{c}
0\\
\text{sin}(\Omega_{1}t)
\end{array}\right),
\]
with $\Omega_{1}=1$. As concluded above already for more general
forcing terms, Theorem \ref{theo:SSM forced} guarantees the existence
of a unique analytic slow SSM, $W_{1}(x_{\epsilon}(t))$, for $\epsilon>0$
small enough. This SSM is already unique among class $C^{6}$ invariant
manifolds tangent to the slow spectral subbundle of a small-amplitude,
periodic NNM, which is guaranteed to exist by Theorem \ref{theo:forced NNM}.
To compute this NNM and its slow SSM, we again use a linear change
of coordinates \eqref{eq:xyz} to obtain the equations of motion in
the form

\begin{eqnarray}
\dot{y} & = & A_{y}y+f_{0y}(y,z)+\epsilon f_{1y}(\Omega_{1}t;\epsilon)=\left(\begin{array}{cc}
-0.0741 & 1.0027\\
-1.0027 & -0.0741
\end{array}\right)y+\left(\begin{array}{c}
1.0148\\
-0.2162
\end{array}\right)p(y,z)\label{eq:ysep_per}\\
 &  & +\epsilon\left(\begin{array}{c}
1.0016\\
-0.0660
\end{array}\right)\text{\ensuremath{\sin}\,}t,\nonumber 
\end{eqnarray}

\begin{eqnarray}
\dot{z} & = & A_{z}z+f_{0z}(y,z)+\epsilon f_{1z}(\Omega_{1}t;\epsilon)=\left(\begin{array}{cc}
-0.3759 & 1.6812\\
-1.6812 & -0.3759
\end{array}\right)z+\left(\begin{array}{c}
0.8046\\
-0.1685
\end{array}\right)p(y,z)\label{eq:zsep_per}\\
 &  & +\epsilon\left(\begin{array}{c}
-0.7987\\
0.3861
\end{array}\right)\text{\ensuremath{\sin}\,}t,\nonumber 
\end{eqnarray}

\[
p(y,z)=-0.5\left(-0.0374\thinspace y_{1}-0.5055\thinspace y_{2}-0.1526\thinspace z_{1}-0.3052\thinspace z_{2}\right)^{3}.
\]

First, we seek the unique periodic NNM of this system in the form
of a Taylor expansion in the perturbation-parameter $\epsilon$. By
Theorem \ref{theo:forced NNM}, this NNM can be written in the form

\begin{equation}
x_{\epsilon}(t)=\epsilon\tau_{1}(t)+\mathcal{O}(\epsilon^{2}).\label{eq:peturb}
\end{equation}
Substitution of this expression into \eqref{eq:ysep_per}-\eqref{eq:zsep_per}
and collection of the $\mathcal{O}(\epsilon)$ terms gives

\begin{equation}
\dot{\tau}_{1}(t)=A_{yz}\tau_{1}(t)+c\text{\ensuremath{\cdot\sin}}t,\qquad A_{yz}=\left(\begin{array}{cc}
A_{y} & 0\\
0 & A_{z}
\end{array}\right),\qquad c=\left(\begin{array}{c}
1.0016\\
-0.0660\\
-0.7987\\
0.3861
\end{array}\right).\label{eq:inhomoglin}
\end{equation}
The unique, periodic particular solution of this inhomogeneous system
of linear differential equations can be sought in the form

\begin{equation}
\tau_{1}(t)=a\text{\ensuremath{\cdot\sin}}t+b\text{\ensuremath{\cdot\cos}}t,\quad a,b\in\mathbb{R}^{4}.\label{eq:q_part}
\end{equation}
Substituting (\ref{eq:q_part}) into (\ref{eq:inhomoglin}) gives
algebraic equations for the vectors $a$ and $b$, whose solutions
are explicitly computable as

\begin{eqnarray*}
 &  & a=-A_{yz}\left(A_{yz}^{2}+I\right)^{-1}c,\qquad b=-\left(A_{yz}^{2}+I\right)^{-1}c.
\end{eqnarray*}
With the relevant parameter values substituted into \eqref{eq:peturb},
we obtain the leading-order approximation of the attracting periodic
NNM in the form

\begin{equation}
x_{\epsilon}(t)=\epsilon\left(\begin{array}{c}
6.7213\\
-0.9408\\
0.0194\\
0.7253
\end{array}\right)\text{\ensuremath{\sin}}t+\epsilon\left(\begin{array}{c}
0.4402\\
6.7357\\
-0.4134\\
-0.0809
\end{array}\right)\text{\ensuremath{\cos}}t+\mathcal{O}(\epsilon^{2}).\label{eq:xepsilon}
\end{equation}

To obtain the unique slow SSM, $W_{1}(x_{\epsilon}(t))$, guaranteed
by Theorem \ref{theo:SSM forced}, we use the time-periodic Taylor
expansion 

\begin{equation}
z=h(y,t)=\sum_{\left|p\right|=0}^{6}h_{p}(t)y^{p},\quad p=\left(p_{1},p_{2}\right)\in\mathbb{N}^{2},\quad y^{p}=y_{1}^{p_{1}}y_{2}^{p_{2}},\quad h_{p}(t)=h_{p}(t+2\pi)\in\mathbb{R}^{2}.\label{eq:z poly_per}
\end{equation}
Differentiating (\ref{eq:z poly_per}) with respect to time and substituting
$\dot{y}$ and $\dot{z}$ from \eqref{eq:ysep_per}-\eqref{eq:zsep_per}
gives

\[
\frac{\partial h(y,t)}{\partial y}\left[A_{y}y+f_{0y}(y,h(y,t))+\epsilon f_{1y}(\Omega_{1}t;\epsilon)\right]+\sum_{\left|p\right|=0}^{6}\dot{h}_{p}y^{p}=A_{z}h(y,t)+f_{0z}\left(y,h(y,t)\right)+\epsilon f_{1z}(\Omega_{1}t;\epsilon).
\]

Comparing equal powers of $y$ in this last expression leads to a
set of coupled ODEs for $h_{p}(t)$. The 2$\pi$-periodicity requirement
on $h_{p}(t)$ given in (\ref{eq:z poly_per}) defines a boundary-value
problem for these ODEs, which we solve numerically. The slow SSM surface
obtained in this fashion is shown in the extended phase space in Fig.
\ref{fig:3D-proj-periodic-SSM}, along with the periodic NNM (red)
obtained in \eqref{eq:xepsilon}.

\begin{figure}[H]
\begin{centering}
\includegraphics[width=0.6\textwidth]{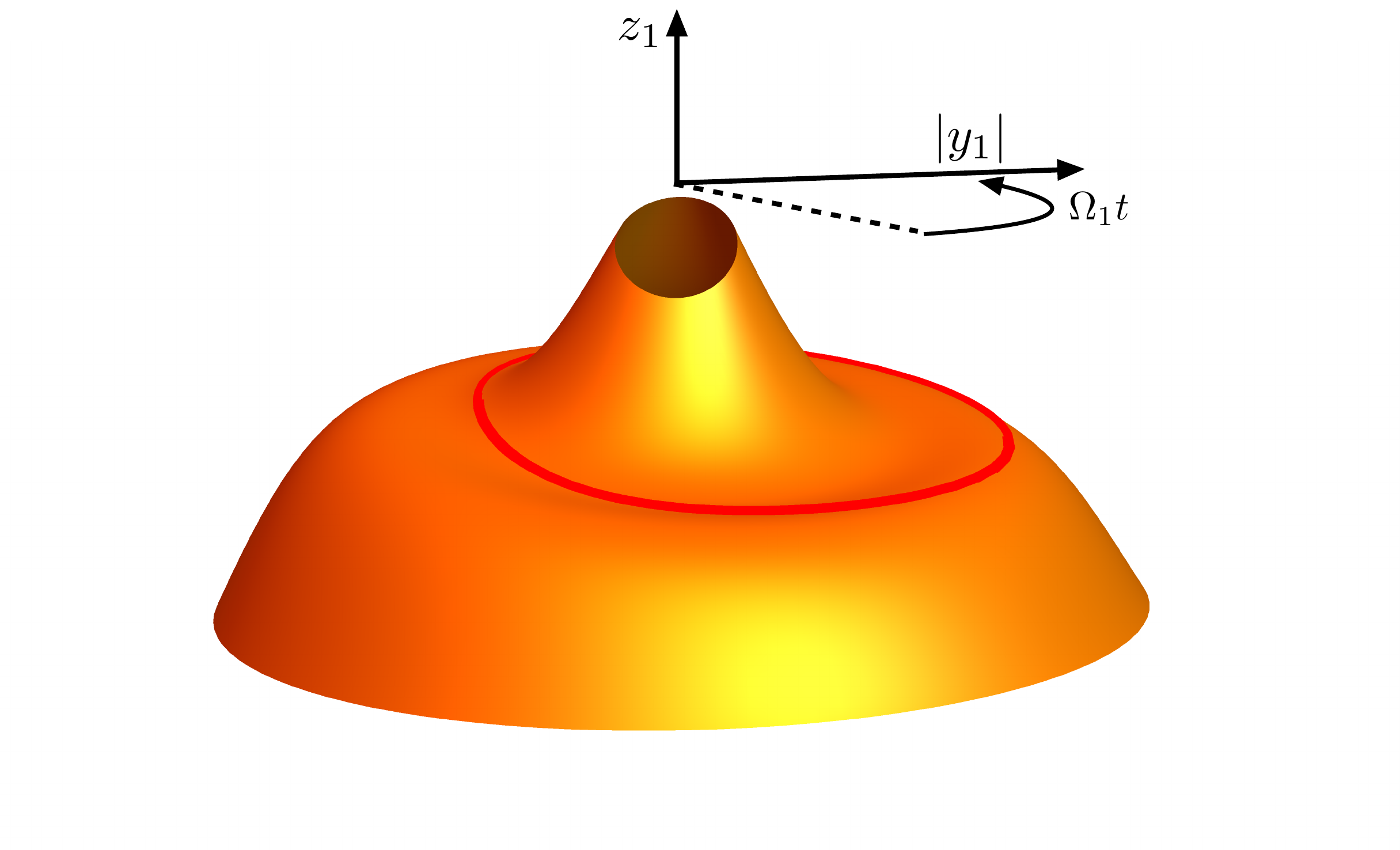}
\par\end{centering}

\caption{Projection of the SSM and NNM for the periodically forced Shaw--Pierre
example from the extended phase space of ($y$, $z$, $\Omega_{1}$t
mod $2\pi$). The forcing parameter is $\epsilon=0.1.$ (See the related
on-line supplemental movie for animation.)\label{fig:3D-proj-periodic-SSM}}
\end{figure}

As an alternative view, an instantaneous projection of the dynamics
on the slow SSM from the four-dimensional $(y_{1},y_{2},z_{1},z_{2})$
phase space is shown in Fig. \ref{fig:3D-proj-periodic-SSM-1}. 
\begin{figure}[H]
\begin{centering}
\includegraphics[width=0.6\textwidth]{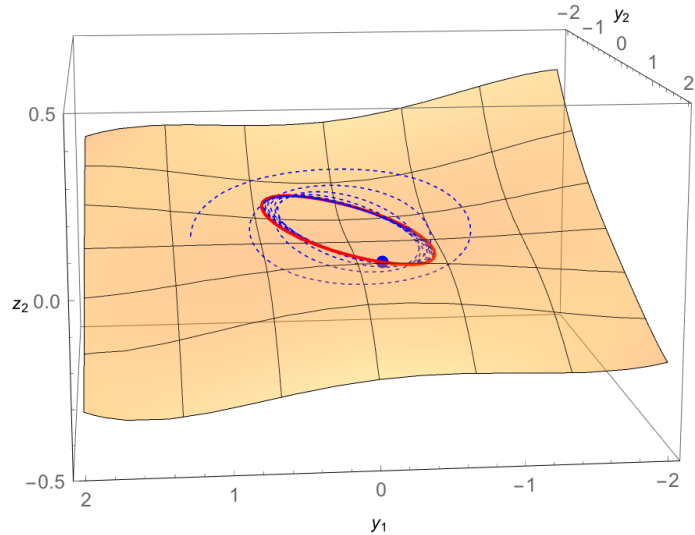}
\par\end{centering}

\caption{Instantaneous projection of the analytically computed periodic NNM
and slow SSM for the periodically forced Shaw--Pierre example from
the $(y_{1},y_{2},z_{1},z_{2})$ phase space for $\epsilon=0.1.$
Shown is an instantaneous position of the SSM surface along with the
history of a trajectory (blue) launched from the SSM at an earlier
time. Note that the trajectory has converged to the analytically computed
approximation to the NNM (red). We find the mean squared error between
the independently computed NNM and its projection onto the SSM to
be $\mathcal{O}(\epsilon^{3})$ over one time period. (See the related
on-line supplemental movie for animation.) \label{fig:3D-proj-periodic-SSM-1}.}
\end{figure}

\section{Relevance for model reduction\label{sec:modelreduction}}

\subsection{Expansions for NNMs\label{sub:Expansions-for-NNMs}}

Theorems \ref{theo:unforced NNM} and \ref{theo:forced NNM} provide
existence, uniqueness and robustness results for NNMs in both the
autonomous and the non-autonomous settings. Specifically, by Theorem
\ref{theo:unforced NNM}, the unique NNM $x_{\epsilon}$ in the autonomous
case ($k=0$) depends on $\epsilon$ in a $C^{r}$ fashion, and hence
can be approximated in the form of a Taylor series
\[
x_{\epsilon}=\epsilon\tau(\epsilon)=\sum_{l=1}^{r}\xi_{l}\epsilon^{l}+o\left(\epsilon^{r}\right),
\]
 with the vector $\xi_{l}\in\mathbb{R}^{N}$ denoting the $l^{th}$
order Taylor coefficient of the function $x_{\epsilon}$. 

In the non-autonomous case $(k>0)$, Theorem \ref{theo:forced NNM}
guarantees a unique NNM, $x_{\epsilon}(t)=\epsilon\tau(\Omega_{1}t,\ldots,\Omega_{k}t;\epsilon)$
in system \eqref{eq:1storder_system} that depends on $\epsilon$
in a $C^{r}$ fashion. Thus $x_{\epsilon}(t)$ can be approximated
in the form of a Taylor--Fourier series 
\begin{eqnarray*}
x_{\epsilon}(t) & = & \epsilon\tau(\Omega_{1}t,\ldots,\Omega_{k}t;\epsilon)=\sum_{l=1}^{r}\epsilon^{l}\xi_{l}(\Omega_{1}t,\ldots,\Omega_{k}t)+o\left(\epsilon^{r}\right)\\
 & = & \sum_{l=1}^{r}\sum_{\left|m\right|=1}^{\infty}\epsilon^{l}\left[A_{m}^{l}\sin(\left\langle m,\Omega\right\rangle t)+B_{m}^{l}\cos(\left\langle m,\Omega\right\rangle t)\right]+o\left(\epsilon^{r}\right),
\end{eqnarray*}
with the vectors $A_{m}^{l},B_{m}^{l}\in\mathbb{R}^{N}$ denoting
the multi-frequency Fourier coefficients of the function $x_{\epsilon}(t)$
corresponding to the multi-index $m=(m_{1},\ldots,m_{k})\in\mathbb{N}^{k}.$

\subsection{Expansions for slow SSMs}

Theorems \ref{theo:SSM unforced} and \ref{theo:SSM forced} provide
a theoretical underpinning for the construction of reduced-order models
over appropriately chosen spectral subspaces of the linearized system.
Specifically, approximations to the flow on an SSM may simplify the
study of long-term system dynamics. 

Of highest relevance for such model reduction are slow SSMs. Since
all linearized solutions decay to an NNM in our setting, slow SSMs
contain the trajectories that resist this trend as much as possible
and remain active for the longest time. These SSMs can be constructed
under the conditions spelled out in the last columns of Tables 1 and
2.

To approximate uniquely a slow SSM, we need to use a Taylor expansion
of at least order $\sigma(E)+1$ or $\Sigma(E)+1$, respectively.
This order depends solely on the damping rates associated with the
fastest- and slowest-decaying modes. Even if the real part of the
whole spectrum of $A$ is close to zero, $\sigma(E)$ and $\Sigma(E)$
may well be large, as seen in the mechanical systems considered in
Examples \ref{ex:shaw-pierre1} and \ref{ex:shaw-pierre2}.
\begin{example}
\label{ex:shaw-pierre2-1}{[}\emph{Illustration of model reduction
on a mechanical example}{]} Here we illustrate the relevance of slow
SSMs in model reduction for the unforced oscillator system in Example
\ref{ex:shaw-pierre1}. Figure \ref{fig:SSM_auto_diff_1} and Fig.
\ref{fig:SSM_auto_diff_2} show different visualization of the fast
convergence of a generic trajectory first to the slow SSM, then to
the stable equilibrium along the SSM.
\end{example}
\begin{figure}[H]
\centering{}\subfloat[\label{fig:SSM_auto_diff_1}]{\begin{centering}
\includegraphics[width=0.45\textwidth]{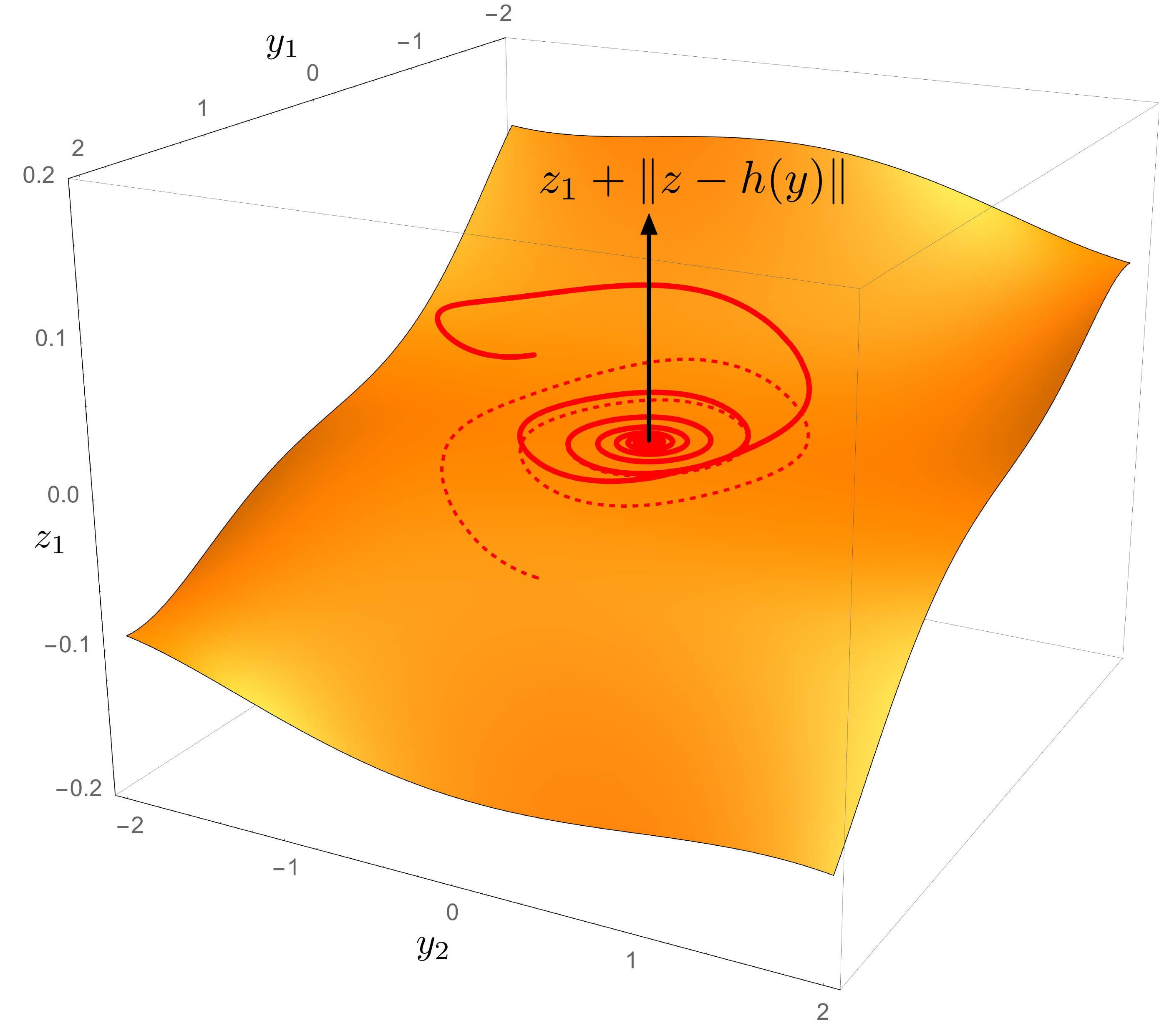}
\par\end{centering}

}\hfill{}\subfloat[\label{fig:SSM_auto_diff_2}]{\centering{}\includegraphics[width=0.5\textwidth]{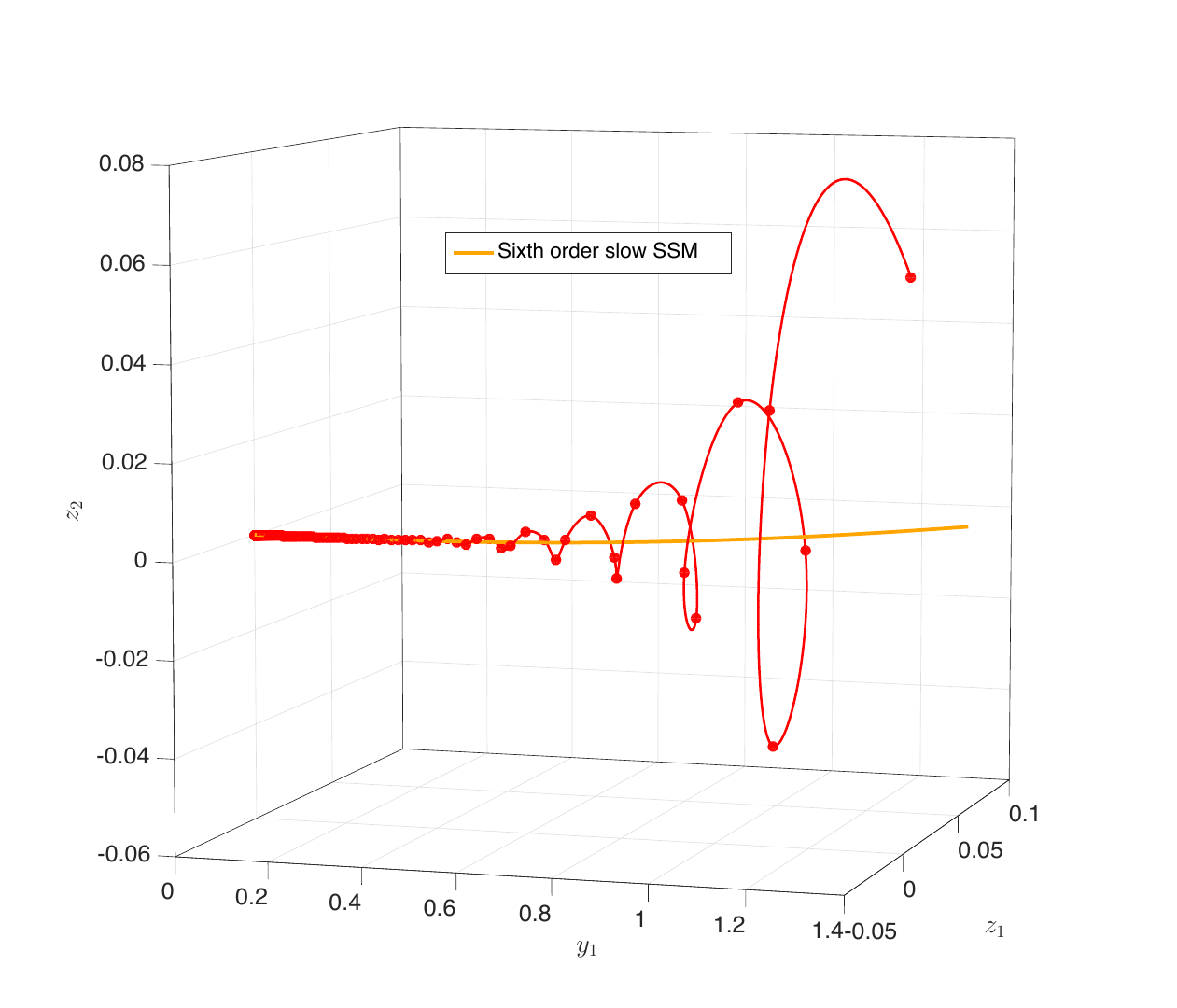}}\caption{(a) Fast convergence of a generic trajectory to the slow SSM, then
subsequently to the equilibrium point along the SSM. Initial conditions
for the trajectory were chosen off the SSM with the coordinates $y_{1}(0)=1.2$,
$y_{2}(0)=0$, $z_{1}(y_{1}(0),y_{2}(0))+\Delta z_{1}=-0.042+0.1$,
$z_{2}(y_{1}(0),y_{2}(0))+\Delta z_{2}=-0.0045+0.1$). The vertical
axis in the figure represents the difference between $(z_{1},z_{2})$
and $h(y_{1},y_{2})$, which decays in time due to attraction to the
SSM. (b) A different view on the same convergences shown by the Poincaré
map already used in Fig. \ref{fig:Poincare map 1}, with the damping
now decreased to $c=0.03$ to increase the number of intersection
with the Poincaré section for clarity. \label{fig:SSM_minipage_2}}
\end{figure}

\subsection{The optimal dimension of the slow SSM}

The integer $q$ in the the choice of the slow spectral subspace $E_{1,\ldots,q}$
is a free parameter. This integer is best selected in a way so that
the resulting slow SSM, $W_{1,\ldots,q}(x_{\epsilon})$, is the most
prevalent low-dimensional attractor containing the underlying NNM
$x_{\epsilon}(t)$ described in Theorems \ref{theo:unforced NNM}
and \ref{theo:forced NNM}. 

Generally, can can construct a nested hierarchy of such prevalent
slow manifolds. At any step in this hierarchy, the remaining slow
spectrum can further be divided along the next largest gap in the
real part of the eigenvalues $\lambda_{j}$ of the linearized system
\eqref{eq:linearization}. Dividing the spectrum along this spectral
gap provides the most readily observable decay rate separation for
the trajectories inside of, and towards, the slow SSM. Defining the
index sequence $q_{j}$ as 
\begin{eqnarray*}
q_{1} & = & \arg\max_{j\in[1,N-1]}\left|\mathrm{Re}\lambda_{j+1}-\mathrm{Re}\lambda_{j}\right|,\\
q_{2} & = & \arg\max_{j\in[1,q_{1}-1]}\left|\mathrm{Re}\lambda_{j+1}-\mathrm{Re}\lambda_{j}\right|,\\
\vdots & \vdots & \vdots\\
q_{l} & = & \arg\max_{j\in[1,q_{k}-1]}\left|\mathrm{Re}\lambda_{j+1}-\mathrm{Re}\lambda_{j}\right|,\\
\vdots & \vdots & \vdots\\
q_{w} & = & 1,
\end{eqnarray*}
gives the nested sequence
\[
E_{1,\ldots,q_{1}}\supset E_{1,\ldots,q_{2}}\supset\ldots\supset E_{1,\ldots,q_{l}}\supset\ldots\supset E_{1}
\]
of $w$ spectral subspaces. If the appropriate nonresonance conditions
of Table 1 or Table 2 are satisfied for each element of this nested
sequence, than a nested sequence of $w$ slow SSMs exists, asymptotic
to an NNM of the full nonlinear system. In the autonomous case, this
nested sequence of slow SSMs is 
\begin{equation}
W_{1,\ldots,q_{1}}(0)\supset W_{1,\ldots,q_{2}}(0)\supset\ldots\supset W_{1,\ldots,q_{l}}(0)\supset\ldots\supset W_{1}(0),\label{eq:nested1}
\end{equation}
 while in the non-autonomous case, we have
\begin{equation}
W_{1,\ldots,q_{1}}(x_{\epsilon}(t))\supset W_{1,\ldots,q_{2}}(x_{\epsilon}(t))\supset\ldots\supset W_{1,\ldots,q_{l}}(x_{\epsilon}(t))\supset\ldots\supset W_{1}(x_{\epsilon}(t)).\label{eq:nested2}
\end{equation}
 In the autonomous case, therefore, the minimal slow SSM is $W_{1}(0)$,
tangent to the slowest eigenspace $E_{1}$ at $x=0$ with $\dim W_{1}(0)=\dim E_{1}$.
In the non-autonomous case, the minimal slow SSM is $W_{1}(x_{\epsilon}(t))$
which is $\mathcal{O}(\epsilon)$ $C^{r}$-close to\emph{ $\left\{ x_{\epsilon}\right\} \times E_{1}$
}in the $x$ variable.

Reducing the full dynamical system \eqref{eq:1storder_system} to
the minimal slow SSM brings the largest reduction in the number of
dimensions: the dimension of the reduced model obtained in this fashion
is equal to the algebraic multiplicity of the eigenvalue $\lambda_{1}$
that lies closest to zero. If this eigenvalue is simple and complex,
then the dimension of the reduced system on the slowest SSM is two.
If the eigenvalue is simple and real, than this reduced dimension
is one. 

Reducing the dynamic to the minimal (slowest) SSM, however, only captures
the correct system dynamics over very long time scales in case the
spectral gap between $\mathrm{Re}\lambda_{1}$ and $\mathrm{Re}\lambda_{2}$
is small. This is because in that case, solution components decaying
transverse the slowest SSM may take a long time to die out. More generally,
the optimal choice of the SSM in the nested sequences \eqref{eq:nested1}-\eqref{eq:nested2}
depends on the time scale over which the approximation of the reduced
flow on the SSM is to be used as a model for the behavior of the full
system. In the absence of a definitive target time scale, a reasonable
choice is $W_{1,\ldots,q_{1}}(0)$ or $W_{1,\ldots,q_{1}}(x_{\epsilon}(t))$,
i.e., the slow SSM corresponding to the largest gap in the real part
of the spectrum of $A$.

\subsection{Implications for the computation of NNMs and slow SSMs}

Theorems \ref{theo:SSM unforced} and \ref{theo:SSM forced} provide
a mathematical foundation for a systematic computation of slow SSMs.
Without going into technical details, we briefly mention the main
computational implications that follow from the application of these
theorems.

\subsubsection{Local Taylor--Fourier expansion for slow SSMs\label{sub:Local-Taylor--Fourier-expansion}}

In our terminology, all slow SSMs are unique and anchored to a unique
NNM, which may be trivial (a fixed point), periodic (a closed orbit)
or quasiperiodic (an invariant torus). The most common nonlinearities
used in mechanical modeling are analytic functions, i.e., have everywhere
convergent Taylor-series expansion in terms of the $x$ and $\epsilon$
variables. To this end, we will assume here that the right-hand side
of the dynamical system \eqref{eq:1storder_system} is analytic near
the origin in all its arguments, i.e., 
\[
f_{0},f_{1}\in C^{a}.
\]
Theorems \ref{theo:SSM unforced} and \ref{theo:SSM forced} then
guarantee that under appropriate low-order nonresonance conditions,
the slow SSMs of the system also admit convergent Taylor expansions
about the NNMs they are anchored to.

Consider a spectral subspace $E_{1,\ldots q}$ with $u:=\dim E_{1,\dots,q}$,
satisfying the nonresonance conditions of Table 1. After a linear
change of coordinates, the variable $x$ can be split as
\[
x=(y,z)\in E_{1,\ldots q}\times E_{q+1,\ldots,N}.
\]
 In these coordinates, system \eqref{eq:1storder_system} takes the
form
\begin{eqnarray}
\dot{y} & = & A_{y}y+f_{0y}(y,z)+\epsilon f_{1y}(y,z,\Omega t;\epsilon),\nonumber \\
\dot{z} & = & A_{z}z+f_{0z}(y,z)+\epsilon f_{1z}(y,z,\Omega t;\epsilon),\label{eq:1st order in modal coordinates}
\end{eqnarray}
with the constant matrices 
\[
A_{y}\in\mathbb{R}^{u\times u},\quad A_{z}\in\mathbb{R}^{(N-u)\times(N-u)},
\]
and with appropriate $C^{r}$ functions $f_{0y},f_{0z},f_{1y}$ and
$f_{1z}.$

In the autonomous case, the unique slow SSM $W_{1,\ldots,q}(0)$ can
then locally be written in the form of a convergent Taylor series
\[
z=h^{0}(y)=\sum_{\left|p\right|=1}^{\infty}h_{p}^{0}y^{p},\quad p=\left(p_{1},\ldots,p_{u}\right),\quad y^{p}:=\left(y_{1}^{p_{1}},\ldots,y_{u}^{p_{u}}\right),\quad h_{p}^{0}\in\mathbb{R}^{N-u}.
\]
By Theorem \ref{theo:SSM unforced}, this expansion can be truncated
at an order
\[
\sigma(E_{1,\ldots,q})+1=\mathrm{Int}\left[\mathrm{Re}\lambda_{N}/\mathrm{Re}\lambda_{1}\right]+1,
\]
as an approximation to the unique slow SSM $W_{1,\ldots,q}(0)$. Lower-order
truncations of $h^{0}(y)$ also approximate a multitude of other invariant
manifolds tangent to $E_{1,\ldots q}$.

In the non-autonomous case, the slow SSM, $W_{1,\ldots,q}(x_{\epsilon},\Omega t)$,
can locally be written in the form of a convergent Fourier--Taylor
series
\begin{eqnarray}
z & = & h^{\epsilon}(y,t)=h^{0}(y)+\epsilon h^{1}(y,\Omega_{1}t,\ldots,\Omega_{k}t;\epsilon)=\sum_{\left|p\right|=1}^{\infty}h_{p}(t)y^{p}\label{eq:ttaylor}\\
 & = & \sum_{\left|p\right|=1}^{\infty}h_{p}^{0}y^{p}+\sum_{l=1}^{\infty}\sum_{\left|p\right|=1}^{\infty}\sum_{\left|m\right|=1}^{\infty}\epsilon^{l}y^{p}\left[C_{lmp}\sin(\left\langle m,\Omega\right\rangle t)+D_{lmp}\cos(\left\langle m,\Omega\right\rangle t)\right],\nonumber 
\end{eqnarray}
with $C_{lmp},D_{lmp}\in\mathbb{R}^{N-u}$. Again, by Theorem \ref{theo:SSM forced},
the convergent power series $h^{0}$ and $h^{1}$ of $y$ can be truncated
at an order
\[
\Sigma(E_{1,\ldots,q})+1=\mathrm{Int}\left[\mathrm{Re}\lambda_{N}/\mathrm{Re}\lambda_{1}\right]+1,
\]
serving as an approximation to the unique slow SSM, $W_{1,\ldots,q}(x_{\epsilon}(t))$.
Lower-order truncations of the series will also approximate an infinity
of other invariant manifolds with similar properties.

For an illustration of these computations in a simple setting, we
refer the reader to Example \ref{ex:shaw-pierre2}. In that example,
the Taylor expansion was carried out up to sixth order, and the Fourier
expansion in formula \eqref{eq:ttaylor} was replaced by the direct
numerical solution of the boundary value problems defining the time-periodic
Taylor coefficients $h_{p}(t)$.

\subsubsection{Local PDEs for slow SSMs\label{sub:Local-PDEs-for}}

Once the existence and uniqueness of the slow SSMs in the appropriate
function class is clarified from Theorems \ref{theo:SSM unforced}
and \ref{theo:SSM forced}, we may also write down a PDE for these
manifolds using their invariance properties. As mentioned in the Introduction
(see also Appendix \ref{sub:Existence-and-uniqueness issues for numerically solved PDEs}),
such PDEs are solved in the literature without specific concern for
the uniqueness of their solution under ill-posed or undetermined boundary
conditions. 

The relevant lesson from Theorems \ref{theo:SSM unforced} and \ref{theo:SSM forced}
is that approximate numerical solutions of these PDE in any set of
basis functions should be constructed in a way that the infinitely
many less smooth invariant manifolds are excluded from consideration.
For instance, in the autonomous case covered by Theorems \ref{theo:SSM unforced},
cost functions penalizing the magnitude of numerically computed derivatives
of order $\sigma(E_{1,\ldots,q})+1$, or $\Sigma(E_{1,\ldots,q})+1$,
respectively, could be employed for a defendable approximation to
the SSM.

\subsubsection{Global parametrization of slow SSMs}

Classic invariant manifold techniques (see, e.g., Fenichel \cite{fenichel71})
construct the invariant surfaces in question as graphs over an appropriate
set of variables. In our present context, this translates to seeking
an SSM as a graph of the form $z=h^{0}(y)$ or $z=h^{\epsilon}(y,t)$,
as assumed in the Taylor--Fourier- and PDE-based approaches discussed
above. Both of these approaches are local in nature, capturing only
a subset of the SSM that can be viewed as a graph over the underlying
$E_{1,\ldots,q}$ spectral subspace. The construction of the SSM,
therefore, breaks down once the SSM develops a fold over $E_{1,\ldots,q}$,
i.e., becomes a multi-valued graph over $E_{1,\ldots,q}$ (cf. Fig.
\ref{fig:graph_vs_embedding})

\begin{figure}[H]
\centering{}\includegraphics[width=0.5\textwidth]{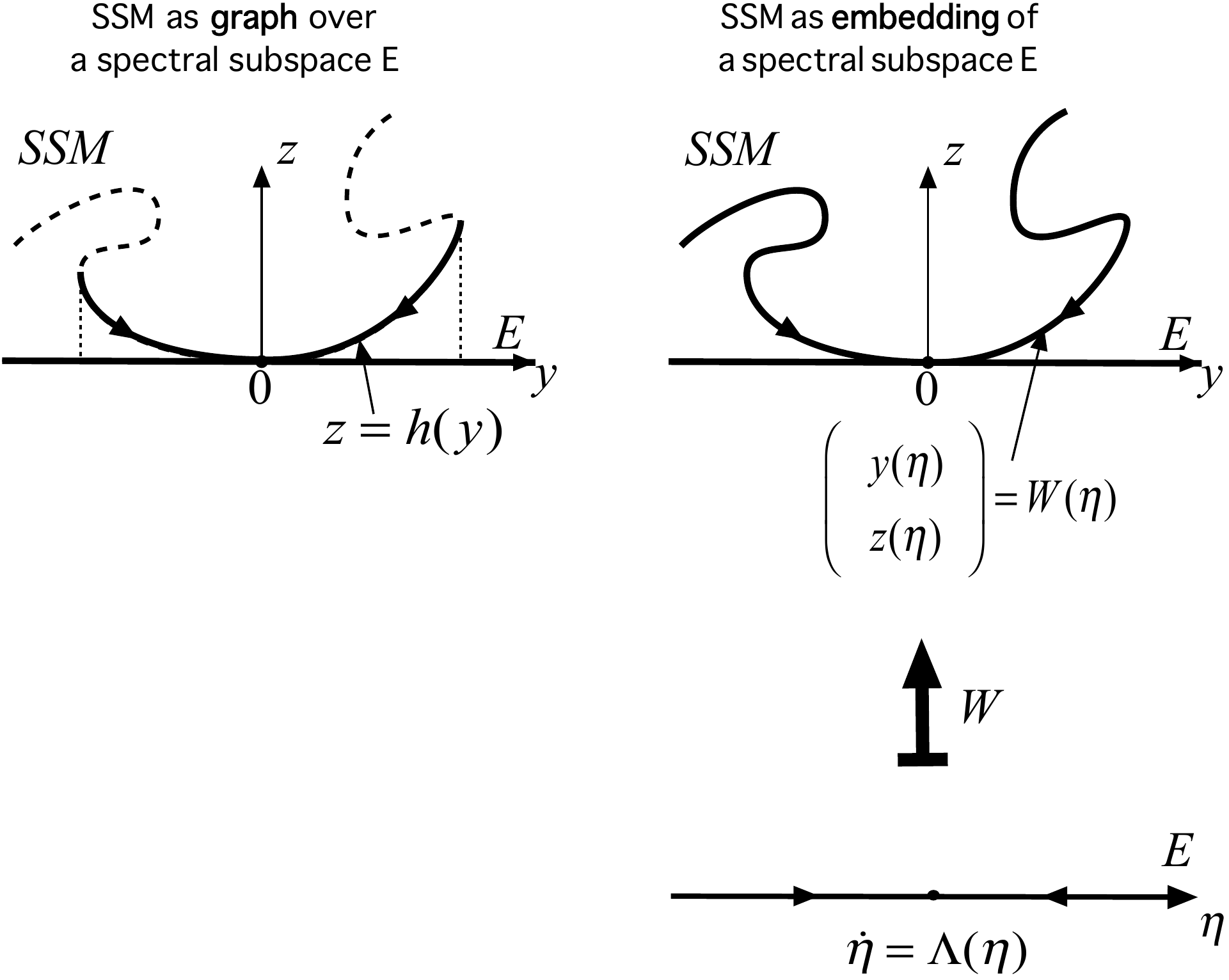}\caption{An illustration of the idea of the parametrization method for autonomous
systems (no dependence on $\phi$): Constructing an SSM as a graph
over a spectral subspace $E$ vs. as an embedding of the spectral
subspace $E$. \label{fig:graph_vs_embedding}}
\end{figure}

The proofs of the results underlying Theorems \ref{theo:SSM unforced}-\ref{theo:SSM forced},
however, do not assume such a graph property. Rather, they construct
the SSM by the parametrization method pioneered by Cabré et al. \cite{cabre03}.
This method renders the SSMs as an embedding of $E_{1,\ldots,q}$
into the phase space $\mathbb{R}^{N}$, rather than a graph over the
subspace $E_{1,\ldots,q}$ of $\mathbb{R}^{N}$. Moreover, the flow
on the SSM is exactly conjugate to a polynomial function of a parametrization
of $E_{1,\ldots,q}$. The order of this polynomial is no larger than
$K=\Sigma(E_{1,\ldots,q}$).

More specifically, with the notation $X(x,\phi)=f_{0}(x)+\epsilon f_{1}(x,\phi,\epsilon),$
our dynamical system \eqref{eq:1storder_system} and its associated
flow map $F^{t}(x,\phi)\colon\mathbb{R}^{N}\times\mathbb{T}^{k}\to\mathbb{R}^{N}$
can be written as
\[
\dot{x}=X(x,\phi),\quad\dot{\phi}=\Omega,\qquad\frac{d}{dt}F^{t}(x,\phi)=X\left(F^{t}(x,\phi),\phi+\Omega t\right),\quad F^{0}(x,\phi)=x.
\]
An SSM can then be sought as the image of $E_{1,\ldots,q}$ under
an embedding
\begin{eqnarray*}
W:E_{1,\ldots,q}\times\mathbb{T}^{k} & \to & \mathbb{R}^{N},\\
(\eta,\phi) & \mapsto & x,
\end{eqnarray*}
such that the reduced model flow on $E_{1,\ldots,q}$ has the associated
differential equation and flow map
\begin{equation}
\dot{\eta}=\Lambda(\eta,\phi),\quad\dot{\phi}=\Omega,\qquad\frac{d}{dt}G^{t}(\eta,\phi)=\Lambda(G^{t}(\eta,\phi),\phi+\Omega t),\quad G^{0}(\eta,\phi)=\eta.\label{eq:reducedeq}
\end{equation}
 Our model flow is defined over all of the spectral subspace $E_{1,\ldots,q}$.
We may seek this model flow map in the form of a Fourier--Taylor expansion
\[
G^{t}(\eta,\phi)=\sum_{\left|j\right|=1}^{K}g_{j}(\phi,t)\eta^{j},
\]
which, substituted into \eqref{eq:reducedeq}, gives
\[
\Lambda\left(\sum_{\left|j\right|=1}^{K}g_{j}(\phi,t)\eta^{j},\phi+\Omega t\right)=\sum_{\left|j\right|=1}^{K}\left[D_{\phi}g_{j}(\phi,t)\Omega+D_{t}g_{j}(\phi,t)\right]\eta^{j}+g_{j}(\phi,t)\Lambda(\eta,\phi).
\]
 The invariance of the SSM can then be expressed by the equation 
\[
F^{t}\left(W(\eta,\phi),\phi\right)=W\left(G^{t}(\eta,\phi),\phi+\Omega t\right).
\]
Differentiating this equation in time and setting $t=0$ yields the
infinitesimal invariance condition
\begin{equation}
X\left(W(\eta,\phi),\phi\right)=D_{\eta}W\left(\eta,\phi\right)\Lambda(\eta,\phi)+D_{\phi}W(\eta,\phi)\Omega.\label{eq:invcond}
\end{equation}
Substituting the analytic Taylor--Fourier expansions
\begin{eqnarray*}
W\left(\eta,\phi\right) & = & \sum_{l=1}^{\infty}\sum_{\left|p\right|=1}^{\infty}\sum_{\left|m\right|=1}^{\infty}\epsilon^{l}\eta^{p}\left[E_{lmp}\sin(\left\langle m,\Omega\right\rangle t)+F_{lmp}\cos(\left\langle m,\Omega\right\rangle t)\right],\\
\Lambda(\eta,\phi) & = & \sum_{l=1}^{\infty}\sum_{\left|p\right|=1}^{\infty}\sum_{\left|m\right|=1}^{\infty}\epsilon^{l}\eta^{p}\left[G_{lmp}\sin(\left\langle m,\Omega\right\rangle t)+H_{lmp}\cos(\left\langle m,\Omega\right\rangle t)\right],
\end{eqnarray*}
into the invariance condition \eqref{eq:invcond}, one can recursively
solve for the coefficients of the embedding $W\left(\eta,\phi\right)$
of the SSMs together with the coefficients of the right-hand side
$\Lambda(\eta,\phi)$ of the differential equation \eqref{eq:reducedeq},
describing the reduced-order dynamics on the slow SSM.

Practical hints on the numerical implementation of the above parametrization
method are described by Haro et al. \cite{haro16} and Mireles--James
\cite{mireles--james15}. As mentioned in the Introduction, Cirillo
et al. \cite{cirillo15a} have recently suggested a computational
technique for a two-dimensional autonomous SSM that is identical to
the parametrization method in their setting.

\section{Conclusions}

We have proposed a unified terminology in the nonlinear modal analysis
of dissipative systems, deriving rigorous existence, uniqueness, smoothness
and robustness results for the nonlinear normal modes (NNMs) and their
spectral submanifolds (SSMs) covered by this terminology. 

The NNMs defined here generalize the original nonlinear normal mode
concept of Rosenberg to dissipative yet eternally recurrent motions
with finitely many frequencies, including fixed points, periodic motions
and quasiperiodic motions. In contrast, the SSMs introduced here are
the smoothest invariant manifolds asymptotic to such generalized NNMs
along their spectral subbundles. As such, SSMs build on the Shaw--Pierre
normal mode concept and clarify its relationship to Rosenberg's concept
in a general dissipative, multi-degree-of-freedom system, possibly
subject to time-periodic or quasipriodic forcing.

In our setting, NNMs are locally unique in the phase space, admitting
a unique SSM over any of their spectral subspaces (or subbundles)
that have no low order resonances with the remaining part of the linearized
spectrum. In the autonomous case, the order of these nonresonance
conditions is fully governed by the relative spectral quotient $\sigma(E)$
of the spectral subspace of interest. In the non-autonomous case,
the role of $\sigma(E)$ is taken over by the absolute spectral quotient
$\Sigma(E)$. Both of these spectral quotients can be a priori determined
from the spectrum of the linearized system (see Tables 1 and 2). 

Our results cover three classes of SSMs: fast, intermediate and slow.
Out of these classes, fast SSMs have unrestricted uniqueness among
all differentiable invariant surfaces in the autonomous case, but
are generally the least relevant for model reduction. In contrast,
slow SSMs are the most relevant for model reduction, but have the
most restricted uniqueness properties. Namely, the minimal order of
a Taylor expansion distinguishing any slow SSM from other invariant
manifolds is the smallest integer that is larger than the ratio of
the strongest and the weakest decay rate of the linearized system.
This spectral ratio may well be large even for weakly damped systems,
thus a careful consideration of damping is essential for rigorous
SSM-based model reduction approaches. 

Our results are meant to aid the construction of formal expansions
and intuitive computations of NNMs and SSMs. As we discussed, most
of these operational approaches tend to hide the fundamental non-uniqueness
of invariant manifolds tangent to modal subspaces. The ambiguity in
the results is inherently small close to the underlying fixed point
but is magnified significantly away from fixed points (see, e.g.,
Fig. \ref{fig:resonant}a), and becomes an obstacle to extending invariant
manifolds in a defendable fashion to larger domains of the phase space.
The use of SSMs eliminate this ambiguity, and should therefore be
useful in expanding the range of nonlinear modal analysis in a well-understood
fashion.\\

\begin{description}
\item [{Acknowledgments}]~
\end{description}
We are grateful to Rafael de la Llave and Alex Haro for detailed technical
explanations on their invariant manifold results, to Ludovic Renson
for clarifying the numerical approach in Ref. \cite{renson14}, and
to Paolo Tiso for helpful discussions on nonlinear normal modes. We
are also thankful to Alireza Hadjighasem for his advice on visualization,
and to Robert Szalai for pointing out typographical errors in an earlier
version of this manuscript. Finally, we are pleased to acknowledge
useful suggestions from the two anonymous reviewers of this work.

\appendix

\section{Existence, uniqueness and analyticity issues for invariant manifolds
tangent to eigenspaces}

\subsection{Modified Euler example of a non-analytic but $C^{\infty}$ center
manifold\label{sub:Euler's-example-of}}

For the system \eqref{eq:Euler's example}, the origin is a fixed
point with eigenvalues $\lambda_{1}=0$ and $\lambda_{2}=-1$ and
corresponding eigenvectors $e_{1}=(1,1)$ and $e_{2}=(0,1)$. Therefore,
the classic center manifold theorem (see, e.g., Guckenheimer and Holmes
\cite{guckenheimer83}) guarantees the existence of a center manifold
$W^{c}(0)$, tangent to the $x$ axis at the origin. We seek $W^{c}(0)$
in the form of a Taylor expansion
\[
y=h(x)=x+\sum_{j=2}^{\infty}a_{j}x^{j},
\]
which we differentiate in time to obtain
\begin{equation}
\dot{y}=\left(1+\sum_{j=2}^{\infty}ja_{j}x^{j-1}\right)\dot{x}=-\left(1+\sum_{j=2}^{\infty}ja_{j}x^{j-1}\right)x^{2}=-x^{2}-\sum_{j=2}^{\infty}ja_{j}x^{j+1}=-\sum_{j=2}^{\infty}(j-1)a_{j-1}x^{j},\label{eq:ydot1}
\end{equation}
where we have let $a_{1}=1.$ At the same, we evaluate the second
equation in \eqref{eq:Euler's example} on the manifold $W^{c}(0)$
to obtain 
\begin{equation}
\dot{y}=-h(x)+x=-\sum_{j=2}^{\infty}a_{j}x^{j}.\label{eq:ydot2}
\end{equation}
Equating \eqref{eq:ydot1} and \eqref{eq:ydot2} gives the recursion
$a_{j}=(j-1)a_{j-1}$ with $a_{1}=1,$ which implies $a_{j}=(j-1)!.$
We therefore obtain the explicit form 
\begin{equation}
h(x)=\sum_{j=1}^{\infty}(j-1)!x^{j}\label{eq:eulermanifold}
\end{equation}
as a formal expansion of the center manifold, as stated in the Introduction.
The formal series $h(x)=\sum_{j=1}^{\infty}(j-1)!x^{j}$, however,
diverges for any $x\neq0$, thus the center manifold is $C^{\infty}$
but not analytic in any open neighborhood of the origin.

\subsection{Uniqueness and analyticity issues for invariant manifolds in linear
systems\label{sub:Uniqueness-and-analyticity}}

Any invariant manifold through the origin of the linearized system
\eqref{eq:diagonalized linearization} is locally a graph over $q$
of the elements of the vector $y$. Such a graph is of the general
form
\begin{equation}
y_{l}=f_{l}(y_{j_{1}},\ldots,y_{j_{q}}),\quad l\notin\left\{ j_{1},\ldots,j_{q}\right\} .\label{eq:general invariant manifold in linearized system}
\end{equation}
By the invariance of these surfaces, one can substitute full trajectories
into \eqref{eq:general invariant manifold in linearized system} and
differentiate in time to obtain the PDE 
\begin{equation}
\lambda_{l}f_{l}=\sum_{i=1}^{q}\lambda_{j_{i}}y_{j_{i}}\partial_{y_{j_{i}}}f_{l},\qquad l\notin\left\{ j_{1},\ldots,j_{q}\right\} .\label{eq:linearPDE}
\end{equation}
This linear PDE can be solved locally by the method of characteristics
(see, e.g., Evans \cite{evans98}), once we prescribe the value of
$f_{l}$ along an appropriate codimension-one set $\Gamma(s_{1},\ldots,s_{q-1})$
of the spectral subspace $E_{j_{1},\ldots,j_{q}}$. Here the real
variables $s=(s_{1},\ldots,s_{q-1})$ parametrize the surface $\Gamma$.
For instance, $\Gamma$ can be selected as a $q-1$ dimensional sphere
in $E_{j_{1},\ldots,j_{q}}$ that surrounds the origin. 

Fixing a boundary condition 
\begin{equation}
f_{l}(\Gamma(s_{1},\ldots,s_{q-1}))=f_{l}^{0}(s_{1},\ldots,s_{q-1})\label{eq:linear BC}
\end{equation}
 gives the equation for characteristics:
\begin{equation}
y_{j_{i}}(t)=\Gamma_{i}(s_{1},\ldots,s_{q-1})e^{\lambda_{j_{i}}t},\qquad i=1,\ldots,q.\label{eq:chareq1}
\end{equation}
 
\begin{equation}
f_{l}(y_{j_{1}}(t),\ldots,y_{j_{q}}(t))=f_{l}^{0}(s_{1},\ldots,s_{q-1})e^{\lambda_{p}t}.\label{eq:chareq2}
\end{equation}

Then, the strategy to obtain a solution for the PDE \eqref{eq:linearPDE}
is the following: express the variables $(s_{1},\ldots,s_{q-1},t)$
as a function of $(y_{j_{1}},\ldots,y_{j_{q}})=(y_{j_{1}}(t),\ldots,y_{j_{q}}(t))$
from the $q$ algebraic equations \eqref{eq:chareq1} in the vicinity
of $\Gamma$, and substitute the result into \eqref{eq:chareq2} to
obtain a solution $f_{l}(y_{j_{1}},\ldots,y_{j_{q}})$ to \eqref{eq:linearPDE}
that satisfies the boundary condition \eqref{eq:linear BC}.

To this end, we rewrite \eqref{eq:chareq1} as 
\begin{equation}
\Gamma_{i}(s_{1},\ldots,s_{q-1})e^{\lambda_{j_{i}}t}-y_{j_{i}}=0,\qquad i=1,\ldots,q,\label{eq:chareq1-1}
\end{equation}
and observe that this system of $q$ algebraic equations is solved
by $t=0$ and $y_{j_{i}}^{0}=y_{j_{i}}(0)=\Gamma_{i}(s_{1},\ldots,s_{q-1})$.
By the implicit function theorem, the variables $(s_{1},\ldots,s_{q-1},t)$
can be expressed from \eqref{eq:chareq1-1} near $\Gamma$ as a function
of $y_{j_{i}}$ if the Jacobian 
\begin{equation}
D_{s_{1},\ldots,s_{q-1},t}\left[\begin{array}{c}
\Gamma_{1}(s_{1},\ldots,s_{q-1})e^{\lambda_{j_{1}}t}-y_{j_{1}}\\
\vdots\\
\Gamma_{q}(s_{1},\ldots,s_{q-1})e^{\lambda_{j_{q}}t}-y_{j_{q}}
\end{array}\right]_{(y_{j_{i}}=y_{j_{i}}^{0},t=0)}=\left[D_{s}\Gamma,\,\,-\Lambda y\vert_{E_{j_{1},\ldots,j_{q}}}\right],\label{eq:impl fun theorem}
\end{equation}
is non-degenerate. In other words, along the surface $\Gamma$, all
tangent vectors of $\Gamma$ should be linearly independent of the
vector field $\Lambda y$ restricted to its invariant subspace $E_{j_{1},\ldots,j_{q}}$.
In the language of linear PDEs, the boundary surface $\Gamma$ should
be a \emph{non-characteristic surface} for a unique, local solution
to exist near $\Gamma$ for any boundary condition posed over $\Gamma$.
This argument just reproduces the classic local existence and uniqueness
result for linear first-order PDEs (see, e.g., Evans \cite{evans98}). 

Under these conditions, therefore, we have a unique, local solution
for any initial function $f_{l}^{0}(s_{1},\ldots,s_{q-1})$ defined
on $\Gamma$. There are infinitely many different choices both for
the surface $\Gamma$ and the boundary values $f_{l}^{0}$. Since
the Jacobian \eqref{eq:impl fun theorem} is non-degenerate for any
$y\neq0$, each of these infinitely many choices leads to a local
invariant surface satisfying \eqref{eq:linearPDE} in the vicinity
of $\Gamma$, which in turn can be propagated all the day to the $y=0$
fixed point along characteristics of the PDE. Accordingly, we obtain
\emph{infinitely many} invariant surfaces tangent to the spectral
subspace $E_{j_{1},\ldots,j_{q}}$ in the linearized system \eqref{eq:diagonalized linearization}.
Applying the more general Theorem \ref{theo:SSM unforced} in the
current linear setting, however, we obtain that only one analytic
solution exists to the PDE \eqref{eq:linearization} for any fixed
subspace $E_{j_{1},\ldots,j_{q}}$ under the nonresonance conditions
detailed in Theorem \ref{theo:SSM unforced}. Since $f_{l}(y_{j_{1}},\ldots,y_{j_{q}})\equiv0$
is analytic, this flat solution must be the unique analytic solution
of \eqref{eq:linearPDE}. All other solutions are only finitely many
times differentiable, and hence are not even $C^{\infty}$.

\subsection{Uniqueness issues for invariant manifolds obtained from numerical
solutions of PDEs\label{sub:Existence-and-uniqueness issues for numerically solved PDEs}}

The PDE approach we described in Section \ref{sub:Uniqueness-and-analyticity}
is broadly used in the literature to compute Shaw--Pierre type invariant
surfaces for nonlinear systems. This approach was originally suggested
by Shaw and Pierre \cite{shaw93}, explored first in detail first
by Peschek et al. \cite{pesheck02}, then developed and applied further
by various authors (see Renson et al. \cite{renson16} for a recent
review). Interestingly, none of these studies reports or discusses
non-uniqueness of solutions, which appears to be in contradiction
with our conclusions in Section \ref{sub:Uniqueness-and-analyticity}.
Here we take a closer look to understand the reason behind this paradox.

In the simplified setting of Section \ref{sub:Uniqueness-and-analyticity},
one may seek invariant manifolds of the form $y_{l}=f_{l}(y_{j_{1}},\ldots,y_{j_{q}}),\quad l\notin\left\{ j_{1},\ldots,j_{q}\right\} $
in a nonlinear system
\begin{equation}
\dot{y}=\Lambda y+g(y),\qquad\Lambda=\mathrm{diag}\left(\lambda_{1},\ldots,\lambda_{N}\right),\quad g(y)=\mathbb{\mathcal{O}}\left(\left|y\right|^{2}\right),\label{eq:nonlin diagonal}
\end{equation}
over a spectral subspace $E_{j_{1},\ldots,j_{q}}$ of the operator
$A$. The same argument we used in the linear case now leads to a
quasilinear version of the linear system of PDEs \eqref{eq:linearPDE}.
This quasilinear system of PDEs is of the form 
\begin{equation}
\lambda_{l}f_{l}+g_{l}(y_{j},f)=\sum_{i=1}^{q}\left[\lambda_{j_{i}}y_{j_{i}}+g_{j_{i}}(y_{j},f)\right]\partial_{y_{j_{i}}}f_{l},\qquad l\notin\left\{ j_{1},\ldots,j_{q}\right\} ,\label{eq:quasilinearPDE}
\end{equation}
 with $y_{j}=(y_{j_{1}},\ldots,y_{j_{q}})$ and $f$ denoting the
vector of the$f_{l}$ functions. 

The local existence and uniqueness theory relevant for this PDE is
identical to that for its linear counterpart (cf. Evans \cite{evans98}).
Specifically, as in Section \ref{sub:Uniqueness-and-analyticity},
boundary conditions 
\begin{equation}
f_{l}(\Gamma(s_{1},\ldots,s_{q-1}))=f_{l}^{0}(s_{1},\ldots,s_{q-1}),\label{eq:quasilinear BC}
\end{equation}
must be posed on a non-characteristic, codimension-one boundary surface
$\Gamma$ inside the subspace $E_{j_{1},\ldots,j_{q}}$ for the PDE
\eqref{eq:quasilinearPDE} to have a unique local solution near $\Gamma$.
Here the required non-characteristic property of $\Gamma$ is that
the projected vector field $\dot{y}_{j}=\left[\Lambda y+g(y)\right]_{j}$
over $E_{j_{1},\ldots,j_{q}}$ should be transverse to $\Gamma$ at
all points. Since this boundary condition is arbitrary, one again
obtains infinitely many local Shaw--Pierre type invariant manifolds
near the boundary surface $\Gamma$ for the nonlinear problem \eqref{eq:nonlin diagonal}:
one for any boundary condition posed over any non-characteristic surface
$\Gamma.$ In the general case, all of these are also global solutions
that extend smoothly to the origin and give a smooth solution to the
PDE \eqref{eq:quasilinearPDE} in a whole neighborhood of the fixed
point. The only exception is when the invariant manifold is sought
as a graph over the $q$ fastest modes. In this case, the strong stable
manifold theorem (Hirsch, Pugh and Shub \cite{hirsch77}) guarantees
the existence of a unique invariant manifold. In this case, while
infinitely many local solutions still exist near a non-characteristic
boundary surface $\Gamma$, these local solutions do not extend smoothly
to the origin.

Surprisingly, all available numerical algorithms aiming to solve \eqref{eq:quasilinearPDE}
in the nonlinear normal modes literature ignore this non-uniqueness
issue. They are typically validated or illustrated on the computation
of two-dimensional invariant manifolds tangent to the single, slowest
decaying spectral subspace ($q=1,$ $\dim E_{1}=2$). Already in this
simplest case, the high-degree of non-uniqueness illustrated in Fig.
\ref{fig:nonuniqueness} definitely applies. This raises the question:
How do these studies obtain a unique invariant manifold? There are
different reasons for each numerical algorithm, as we review next.

Peschek et al. \cite{pesheck02} consider a spectral subspace $E_{1}$
corresponding to a simple, complex conjugate pair of eigenvalues.
They pass to amplitude-phase variables $(a,\varphi)$ by letting $y_{j_{1}}=ae^{i\varphi}$,
and reconsider the quasilinear PDE \eqref{eq:quasilinearPDE} posed
for the unknown functions $f_{l}(a,\varphi)$. As domain boundary
$\Gamma$, they then consider the $a=0$ axis, over which they prescribe
$f_{l}(0,\varphi)=0$ and $\partial_{a}f_{l}(0,\varphi)=0.$ This
is consistent with the fact that the origin $y_{j_{1}}=0$ is mapped,
due to the singularity of the polar coordinate change, to the $a=0$
of the $(a,\phi)$ coordinate space, and hence the surface should
have a quadratic tangency with this line. However,  the $a=0$ line
is invariant under the transformed nonlinear vector field $(\dot{a},\dot{\varphi)}$,
given that it is the image of the fixed point of the original nonlinear
system, which satisfies $\dot{a}=0$. As a consequence, $\Gamma$
is a characteristic surface, and hence local existence and uniqueness
is not guaranteed for the quasilinear PDE \eqref{eq:quasilinearPDE}
with this boundary condition. As we discussed above, the PDE is in
fact known to have infinitely many solutions, all of which have a
quadratic tangency with the origin, and hence satisfy the singular
boundary conditions $f_{l}(0,\phi)=0$ and $\partial_{a}f_{l}(0,\phi)=0$
in polar coordinates. Therefore, the problem considered by Peschek
et al. \cite{pesheck02} only has a unique solution for invariant
manifolds over the fast modes, but not over the slow or intermediate
modes. The same holds true for all other studies utilizing the approach
developed by Peschek et al. \cite{pesheck02}.

Renson et al. \cite{renson14} solve the same quasilinear PDE \eqref{eq:quasilinearPDE}
in the setting of Peschek et al. \cite{pesheck02} (autonomous system
with $q=1$ and with $\dim E_{1}=2$). In the conservative case, they
seek to construct solutions using a closed boundary curve $\Gamma$
to which the nonlinear vector field $\dot{y}_{j}$ is tangent at each
point. For damped systems, they solve the PDE outward from the equilibrium,
first over an elliptic domain, then gradually outwards over a nested
sequence of annuli. The boundaries of all these domains are selected
as non-characteristic curves, thus a unique solution can be constructed
over each domain in the nested sequence. Over the initial (elliptic)
domain boundary, however, the spectral subspace itself is chosen as
initial condition ($f_{l}^{0}(\Gamma)=0$ for all $l>2$), which singles
out one special solution out of the arbitrarily many. The perceived
uniqueness is, therefore, the artifact of the numerical procedure.

Finally, Blanc et al. \cite{blanc13} start out by correctly selecting
a non-characteristic boundary curve $\Gamma$ in the amplitude--phase--coordinate
setting of Peschek et al. \cite{pesheck02} discussed above. This
curve is just the $\varphi=0$ line of the $(a,\varphi)$ coordinate
plane, to which the characteristics of the PDE are transverse in a
neighborhood of the origin, as required for the local existence and
uniqueness of solutions near $\Gamma$. In this case, any initial
profile $f_{l}(a,0)=f_{l}^{0}(a)$ with $f_{l}^{0}(0)=0$ and $f_{l}^{0\prime}(0)=0$
would lead to a Shaw--Pierre type invariant manifold, thereby revealing
the inherent non-uniqueness of this numerical approach. Instead of
realizing this, Blanc et al. \cite{blanc13} assert that there is
a single correct boundary condition that they need to find by an optimization
process. 

In this optimization process, Blanc et al. \cite{blanc13} modify
the initial boundary condition iteratively so that the computed PDE
solution along the line $\varphi=2\pi$, given by $f_{l}(a,2\pi)$,
is as close to $f_{l}(a,0)=f_{l}^{0}(a)$ as possible in the $L^{2}$
norm. Should they enforce the exact periodicity of the solution of
the PDE on the periodic domain $(a,\varphi)\in[0,a_{max}]\times[0,2\pi]$
(say, by a spectral method), they would always have $f_{l}(a,\varphi)\equiv f_{l}(a,0)$
on any solution, so minimizing the error in this identity would lead
to a vacuous process. In other words, the seemingly unique solution
in this approach is the surface along which the error arising from
an inaccurate handling of the periodic boundary conditions is minimal
in a particular norm.

\section{Existence, uniqueness and persistence of non-autonomous NNMs}

We rewrite system \eqref{eq:1storder_system} in the form of a $(N+k)$-dimensional
autonomous system 
\begin{eqnarray}
\dot{x} & = & Ax+f_{0}(x)+\epsilon f_{1}(x,\phi;\epsilon),\label{eq:extended_full_system}\\
\dot{\phi} & = & \Omega,\nonumber 
\end{eqnarray}
defined on the phase space $\mathcal{P}=\mathcal{U}\times\mathbb{T}^{k}$.
For $\epsilon=0$, the trivial normal mode $x=0$ now appears as an
invariant, $k$-dimensional torus
\[
T_{0}=\left\{ (x,\phi)\in\mathcal{P}\,:\,x=0,\,\,\phi\in\mathbb{T}^{k}\right\} 
\]
for system \eqref{eq:extended_full_system}.

Assume that all eigenvalues of $A$ satisfy the condition $\mathrm{Re}\lambda_{i}\neq0.$
This means that all possible exponential contraction and expansion
rates transverse to $T_{0}$ dominate (the zero) expansion and contraction
rates in directions tangent to $T_{0}$, along the $\phi$ coordinates.
In the language of the theory of normally hyperbolic invariant manifolds,
the torus $T_{0}$ is a compact, $r$-normally hyperbolic invariant
manifold for any integer $r\geq1$ (Fenichel \cite{fenichel71}).

Fenichel's general result on invariant manifolds do not allow, however,
to conclude the persistence of $C^{0}$, $C^{\infty}$ or $C^{a}$
normally hyperbolic invariant manifolds. Instead, such persistence
is established by Haro and de la Llave \cite{haro06}, who specifically
study persistence of invariant tori in systems of the form of \eqref{eq:extended_full_system}.

\section{Existence, uniqueness and persistence for autonomous SSMs ($k=0$)}

First, we recall a more abstract results of Cabré, Fontich and de
la Llave \cite{cabre03} on mappings in Banach spaces, which we subsequently
apply to our setting.

\subsection{Spectral submanifolds for mappings on complex Banach spaces}

We denote by $\mathcal{P}$ a real or complex Banach space, and by
$\mathcal{U}\subset\mathcal{P}$ an open set. We let $C^{r}(\mathcal{U},Y)$
denote the set of functions $f:U\to Y$ that have continuos and bounded
derivatives up to order $r$ in $\mathcal{U}$. Let the space $C^{\infty}(\mathcal{U},Y)$
denote the set of those functions $f$ that are in the class $C^{r}(\mathcal{U},Y)$
for every $r\in\mathbb{N}$, and let $C^{a}(\mathcal{U},Y)$ denote
the set of functions $f$ that are bounded and analytic in $U$. 

Let $0\in\mathcal{U}$ be a fixed point for a $C^{r}$ map $\mathcal{\mathcal{F}\colon}\mathcal{U}\to\mathcal{P},$
where $r\in\mathbb{N}\cup\{\infty,a\}.$ We denote the linearized
map at the fixed point by $\mathcal{A}=D\mathcal{F}(0)$ and its spectrum
by $\mathrm{spec}(\mathcal{A}).$ 

We also assume a direct sum decomposition $\mathcal{P}=\mathcal{P}_{1}\oplus\mathcal{P}_{2}$,
with the subspaces $\mathcal{P}_{1}$ and $\mathcal{P}_{2}$ to be
described shortly in terms of the spectral properties of $\mathcal{A}$.
We denote the projections from the full space $\mathcal{P}$ onto
these two subspaces by $\pi_{1}\colon\mathcal{P}\to\mathcal{P}_{1}$
and $\pi_{2}\colon\mathcal{P}\to\mathcal{P}_{2},$ and assume that
both projections are bounded. Finally, for any set $S$ and positive
integer $k$, we will use the notation 
\[
S^{k}=\underbrace{S\times\ldots\times S}_{k}
\]
for the $k$-fold direct product of $S$ with itself. 

Assume now that
\begin{description}
\item [{(0)}] $\mathcal{A}$ is invertible
\item [{(1)}] The subspace $\mathcal{P}_{1}$ is invariant under the map
$\mathcal{A}$, i.e., 
\[
\mathcal{A}\mathcal{P}_{1}\subset\mathcal{P}_{1}.
\]
As a result, we have a representation of $\mathcal{A}$ with respect
to above decomposition as
\begin{equation}
\mathcal{A}=\left(\begin{array}{cc}
\mathcal{A}_{1} & \mathcal{B}\\
0 & \mathcal{A}_{2}
\end{array}\right),\label{eq:decomp_of_A}
\end{equation}
with the operators $\mathcal{A}_{1}=\pi_{1}\mathcal{A}\vert_{\mathcal{P}_{1}},$
$\mathcal{A}_{2}=\pi_{2}\mathcal{A}\vert_{\mathcal{P}_{2}},$ and
$\mathcal{B}=\pi_{1}\mathcal{A}\vert_{\mathcal{P}_{2}}.$ If $\mathcal{P}_{2}$
is also an invariant subspace for $\mathcal{A}$, then we have $\mathcal{B}=0$.
\item [{(2)}] The spectrum of $\mathcal{A}_{1}$ lies strictly inside the
complex unit circle, i.e., $\mathrm{Spect}(\mathcal{A}_{1})\subset\left\{ z\in\mathbb{C}\,:\,\left|z\right|<1\right\} $.
\item [{(3)}] The spectrum of $\mathcal{A}_{2}$ does not contain zero,
i.e., $0\notin\mathrm{Spect}(\mathcal{A}_{2})$.
\item [{(4)}] For the smallest integer $L\geq1$ satisfying 
\begin{equation}
\left[\mathrm{Spect}(\mathcal{A}_{1})\right]^{L+1}\mathrm{Spect}(\mathcal{A}_{2}^{-1})\subset\left\{ z\in\mathbb{C}\,:\,\left|z\right|<1\right\} ,\label{eq:ratecond}
\end{equation}
 we have
\begin{equation}
\left[\mathrm{Spect}(\mathcal{A}_{1})\right]^{i}\cap\mathrm{Spect}(\mathcal{A}_{2})=\emptyset\label{eq:nonresonancecond}
\end{equation}
 for every integer $i\in[2,L]$ (in case $L\geq2)$. 
\item [{(5)}] $L+1\leq r.$ 
\end{description}
We then have the following result:
\begin{thm}
\label{CFL}{[}Theorems 1.1 and 1.2, Cabré, Fontich and de la Llave
\cite{cabre03}{]} Under assumptions (0)-(5):\end{thm}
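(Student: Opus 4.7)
The plan is to realize the invariant manifold through the parametrization method of Cabr\'e, Fontich and de la Llave: construct an embedding $W\colon \mathcal{P}_1 \to \mathcal{P}$ together with a reduced self-map $R\colon \mathcal{P}_1 \to \mathcal{P}_1$ solving the conjugacy
\[
\mathcal{F}\circ W = W\circ R,
\]
with the normalizations $W(0)=0$, $DW(0) = I_1$ (the canonical inclusion of $\mathcal{P}_1$ into $\mathcal{P}$), and $DR(0) = \mathcal{A}_1$. The image $W(\mathcal{P}_1)$ is then the desired $\mathcal{F}$-invariant manifold tangent to $\mathcal{P}_1$ at the origin, and $R$ captures its internal dynamics.

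First I would solve the conjugacy as a formal polynomial expansion of degree at most $L$, writing $W = I_1 + \sum_{k=2}^{L} W_k$ and $R = \mathcal{A}_1 + \sum_{k=2}^{L} R_k$ with $W_k, R_k$ homogeneous of degree $k$. Expanding the invariance equation order by order and projecting onto $\mathcal{P}_2$ yields, at each $k\in[2,L]$, a cohomological equation of the form
\[
\pi_2 W_k \circ \mathcal{A}_1 \;-\; \mathcal{A}_2\,\pi_2 W_k \;=\; (\text{known terms of order } < k),
\]
whose solvability follows from assumption (\ref{eq:nonresonancecond}): the linear operator $X\mapsto X\circ\mathcal{A}_1 - \mathcal{A}_2 X$ acting on $k$-homogeneous polynomial maps from $\mathcal{P}_1$ to $\mathcal{P}_2$ has spectrum $[\mathrm{Spect}(\mathcal{A}_1)]^k - \mathrm{Spect}(\mathcal{A}_2)$, which is bounded away from zero exactly by the nonresonance hypothesis. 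The $\mathcal{P}_1$-projection is then absorbed trivially by choosing $R_k$.

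Second, taking the polynomial $W^{\leq L}$ as a starting approximation, I would write $W = W^{\leq L} + \Delta$ with $\Delta$ vanishing to order $L+1$ at the origin, and recast the invariance equation as a fixed-point problem $\Delta = \mathcal{T}[\Delta]$ on a suitable Banach space of maps with that order of vanishing. Schematically, $\mathcal{T}$ expresses $\pi_2\Delta$ by inverting $\mathcal{A}_2$ (available by assumption (3)) against the composition of the correction with $R$ from the right, plus smooth nonlinear remainders. The linearization $D\mathcal{T}(0)$ has operator norm controlled by the spectral radius of $[\mathrm{Spect}(\mathcal{A}_1)]^{L+1}\mathrm{Spect}(\mathcal{A}_2^{-1})$, which is strictly less than one by the rate condition (\ref{eq:ratecond}). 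The Banach contraction principle then produces a unique $\Delta$, and hence a unique $W$, in the chosen function class, which may be taken as $C^r$, $C^\infty$, or $C^a$ by working in the corresponding weighted norm (analytic maps on a polydisc, or weighted $C^r$ jets vanishing to a prescribed order). Uniqueness among all $C^{L+1}$ invariant manifolds tangent to $\mathcal{P}_1$ follows by repeating the contraction argument on the difference of any two such candidates: this difference has trivial $L$-jet and satisfies the homogeneous version of the fixed-point equation, on which $D\mathcal{T}(0)$ still contracts, forcing it to vanish.

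The main obstacle will be the contraction estimate itself. The operator $\mathcal{T}$ involves composition with $R$ from the right, which typically loses derivatives or loses domain when $R$ is not a pure contraction; this loss must be more than offset by the gain from $\mathcal{A}_2^{-1}$, and verifying that (\ref{eq:ratecond}) delivers exactly this compensation, uniformly across the relevant function space, is the delicate technical heart of the argument. In the analytic setting this requires Cauchy estimates on a nested sequence of polydiscs with a controlled geometric loss of domain; in the finite-smoothness setting it requires weighted $C^r$ norms tracking the order of vanishing at the origin together with careful application of the Fa\`a di Bruno formula to the Nemytskii composition $\Delta\mapsto \Delta\circ R$. Everything else (smooth dependence on parameters, $C^\infty$ and $C^a$ regularity) then drops out of the same fixed-point argument by replacing the ambient Banach space with its parametrized analogue.
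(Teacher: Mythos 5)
This statement is imported verbatim from Cabr\'e, Fontich and de la Llave \cite{cabre03}; the paper offers no proof of its own, only the citation. Your outline is a faithful sketch of the strategy actually used in that reference -- the parametrization method, with the order-by-order solution of the cohomological equations up to degree $L$ controlled by the nonresonance condition \eqref{eq:nonresonancecond} and the fixed-point argument for the order-$(L+1)$ remainder controlled by the rate condition \eqref{eq:ratecond} -- so it matches the source's approach in all essentials.
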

\begin{description}
\item [{(i)}] There exists a $C^{r}$ manifold $\mathcal{M}_{1}$ that
is invariant under $\mathcal{F}$ and tangent to the subspace $\mathcal{P}_{1}$
at $0$. 
\item [{(ii)}] The invariant manifold $\mathcal{M}_{1}$ is unique among
all $C^{L+1}$ invariant manifolds of $\mathcal{F}$ that are tangent
to the subspace $\mathcal{P}_{1}$ at $0$. That is, every two $C^{L+1}$
invariant manifolds with this tangency property will coincide in a
neighborhood of $0$.
\item [{(iii)}] There exists a polynomial map $R:\mathcal{P}_{1}\to\mathcal{P}_{1}$
of degree not larger than $L$ and a $C^{r}$ map $K\colon\mathcal{U}_{1}\subset\mathcal{P}_{1}\to\mathcal{P}$,
defined over an open neighborhood $\mathcal{U}_{1}$ of $0$, satisfying
\[
R(0)=0,\quad DR(0)=\mathcal{A}_{1},\quad K(0)=0,\quad\pi_{1}DK(0)=I,\quad\pi_{2}DK(0)=0,
\]
 such that $K$ serves as an embedding of $\mathcal{M}_{1}$ from
$\mathcal{P}_{1}$ to $\mathcal{P}$, and $R$ represents the pull-back
of the dynamics on $\mathcal{M}_{1}$ to $\mathcal{U}_{1}$ under
this embedding. Specifically, we have
\[
\mathcal{F}\circ K=K\circ R.
\]

\item [{(iv)}] If, furthermore, $\left[\mathrm{Spec}(\mathcal{A}_{1})\right]^{i}\cap\mathrm{Spec}(\mathcal{A}_{1})=\emptyset$
holds for every integer $i\in[L_{-},L],$ then $R$ can be chosen
to be a polynomial of degree not larger than $L_{-}-1.$ 
\item [{(v)}] Dependence on parameters: If $\mathcal{F}$ is jointly $C^{r}$
in $x$ and a parameter $\mu$, the the invariant manifold $\mathcal{M}_{1}$
is jointly $C^{r-L-1}$ in space and the parameter $\mu$. In particular,
$C^{\infty}$ and analytic maps will have invariant manifolds that
are $C^{\infty}$ and analytic, respectively, with respect to any
parameters in the system. 
\end{description}

\subsection{\label{sub:proof SSM unforced}Proof of Theorem \ref{theo:SSM unforced}}

We now apply Theorem \ref{CFL} to system \eqref{eq:unforced}. In
this context, the space $\mathcal{P}$ is the finite-dimensional,
real vector space $\mathcal{P}=\mathbb{R}^{N},$ and the mapping is
the time-one map $\mathcal{F}=F^{1}\colon\mathcal{U\subset\mathcal{P}}\to\mathcal{P}$
of system \eqref{eq:unforced}. We further have
\begin{equation}
\mathcal{F}(0)=0,\quad\mathcal{A=}D\mathcal{F}(0)=DF^{1}(0)=e^{A},\label{eq:calAdef}
\end{equation}
 and hence $\mathcal{A}$ is invertible. We have the spectra 
\begin{equation}
\mathrm{spec}(\mathcal{A})=\left\{ e^{\lambda_{1}},e^{\bar{\lambda}_{1}},\ldots,e^{\lambda_{N}},e^{\bar{\lambda}_{N}}\right\} ,\qquad\mathrm{spec}(\mathcal{A}^{-1})=\left\{ e^{-\lambda_{1}},e^{-\bar{\lambda}_{1}},\ldots,e^{-\lambda_{N}},e^{-\bar{\lambda}_{N}}\right\} ,\label{eq:specs}
\end{equation}
 where we have ordered the eigenvalues in an increasing order based
on their real parts, i.e., 
\[
\mathrm{Re}\lambda_{N}\leq\ldots\mathrm{\leq\mathrm{Re}\lambda_{1}<0},
\]
and listed purely real elements of the spectrum of $\mathcal{A}$
and $\mathcal{A}^{-1}$ twice to simplify our notation. Equation \eqref{eq:specs}
implies that condition (0) of Theorem \ref{CFL} is always satisfied. 

For a given spectral subspace $E$, we let $\mbox{\ensuremath{\mathcal{P}_{1}}}=E,$
so that assumption (1) of Theorem \ref{CFL} is satisfied. Because
the real part of the spectrum of $A$ is assumed to be strictly negative,
the operator $\mathcal{A}$ defined in \eqref{eq:calAdef} satisfies
assumptions (2)-(3) of Theorem \ref{CFL}.

Next we note that the smallest integer $L$ satisfying 
\[
\left[\mathrm{Spect}(\mathcal{A}_{1})\right]^{L+1}\mathrm{Spect}(\mathcal{A}_{2}^{-1})\subset\left\{ z\in\mathbb{C}\,:\,\left|z\right|<1\right\} ,
\]
 is just the smallest integer that satisfies
\[
\left[e^{\max_{\lambda\in\mathrm{Spect}(A\vert_{E})}\mathrm{Re}\lambda}\right]^{L+1}e^{\min_{\lambda\in\mathrm{Spect}(A)-\mathrm{Spect}(A\vert_{E})}\mathrm{Re}\lambda}<1.
\]
The solution of this inequality for a general real number $L$ is
\[
L>\frac{\min_{\lambda\in\mathrm{Spect}(A)-\mathrm{Spect}(A\vert_{E})}\mathrm{Re}\lambda}{\max_{\lambda\in\mathrm{Spect}(A\vert_{E})}\mathrm{Re}\lambda}-1,
\]
 which, restricted to integer solutions, becomes 
\[
L\geq\sigma(E),
\]
with the relative spectral quotient $\sigma(E)$ defined in \eqref{eq:relative_sigma}.
The nonresonance condition \eqref{eq:nonresonancecond} can then be
written in our setting precisely in the form \eqref{eq:unforced_nonresonance}.
Thus, under the assumptions of Theorem \ref{theo:SSM unforced}, the
conditions of Theorem \ref{CFL} are satisfied, and the statements
of Theorem \ref{theo:SSM unforced} are restatements of Theorem \ref{CFL}
in our present context.

\subsection{Comparison with applicable results for normally hyperbolic invariant
manifolds\label{sub:NHIM proof unforced}}

Out of the three types of SSMs covered by Theorem \ref{theo:SSM unforced},
the existence of the slow SSMs (last column in Table 1) can also be
deduced in a substantially weaker form from the classical theory of
inflowing invariant normally hyperbolic invariant manifolds (Fenichel\cite{fenichel71}).
To show this, we first rescale variables via $x\to\delta x$ in system
\eqref{eq:unforced} to obtain the rescaled autonomous problem
\begin{equation}
\dot{x}=Ax+\delta\tilde{f}_{0}(x;\delta),\qquad\tilde{f}_{0}(x;\delta):=\frac{1}{\delta^{2}}f_{0}(\delta x).\label{eq:rescaled unforced}
\end{equation}

For $\delta=0$, this system coincides with the linearized system
\eqref{eq:linearization}, while for $\delta>0$, it is equivalent
to the full autonomous nonlinear system \eqref{eq:unforced}. 

Assume now that the slow spectral subspace $E_{1,\ldots,q}$ featured
in Table 1 satisfies the strict inequality 
\[
\mathrm{Re}\lambda_{q+1}<\mathrm{Re}\lambda_{q}.
\]
 This implies that $E_{1,\ldots,q}$, is normally hyperbolic, i.e.,
all decay rates of the linearized system within $E_{1,\ldots,q}$
are weaker than any decay rate transverse to $E_{1,\ldots,q}$. Furthermore,
a small compact manifold $\tilde{E}_{1,\ldots,q}\subset E_{1,\ldots,q}$
with boundary can be selected for the unperturbed limit ($\delta=0$)
of system \eqref{eq:rescaled unforced} such that $\dim\tilde{E}_{1,\ldots,q}=\dim E_{1,\ldots,q}$
and $\tilde{E}_{1,\ldots,q}$ is inflowing invariant under the unperturbed
limit of \eqref{eq:rescaled unforced}. This means that $Ax$ points
strictly outwards on the boundary $\partial\tilde{E}_{1,\ldots,q}$.
Then, for $\delta>0$ small enough, the classic results of Fenichel
\cite{fenichel71} imply the existence of an invariant manifold $\tilde{W}(0)$
with boundary in system \eqref{eq:rescaled unforced} that is $C^{1}$-close
to $\tilde{E}_{1,\ldots,q}$. Furthermore, $\dim\tilde{W}(0)=\dim E_{1,\ldots,q}$
and the manifold $\tilde{W}(0)$ is of class $C^{\gamma}$, with 
\begin{equation}
\gamma=\min\left(r,\mathrm{Int}\left[\frac{\mathrm{Re}\lambda_{q+1}}{\mathrm{Re}\lambda_{q}}\right]\right),\label{eq:gamma_unforced}
\end{equation}
which is the minimum of the degree of smoothness of \eqref{eq:rescaled unforced}
and the integer part of the ratio of the weakest decay rate normal
to $\tilde{E}_{1,\ldots,q}$ to the strongest decay rate inside $\tilde{E}_{1,\ldots,q}$.
Since $\delta>0$ has to be selected small in this result to keep
the norm $\delta\left|\tilde{f}(0)\right|$ small enough, the above
conclusion on the existence of $\tilde{W}(0)$ holds in a small enough
neighborhood of $x=0$ in system \eqref{eq:unforced}.

This result might seem attractive at the first sight, as it requires
no nonresonance conditions among the eigenvalues of the operator $A.$
At the same time, the properties of $\tilde{W}(0)$ are substantially
weaker than those obtained for $W_{1,\ldots,q}(0)$ in Theorem \ref{theo:SSM unforced}.
First, the degree $\gamma$ of differentiability for $\tilde{W}(0)$
(cf. formula \eqref{eq:gamma_unforced}) is generally much lower than
$r$, the degree of smoothness of system \eqref{eq:unforced}. In
particular, even if \eqref{eq:unforced} is analytic, the manifold
$\tilde{W}(0)$ may well just be once continuously differentiable,
and hence cannot be sought in the form of a convergent Taylor expansion.
Second, no uniqueness is guaranteed by the normal hyperbolicity results
of Fenichel \cite{fenichel71} for $\tilde{W}(0)$ within any class
of invariant manifolds. Third, the whole argument is only applicable
to slow SSMs, but not to intermediate and fast SSMs.

\subsection{Comparison with results deducible from analytic linearization theorems\label{sub:Poincare proof unforced}}

The analytic linearization theorem of Poincaré \cite{poincare79}
concerns complex systems of differential equations of the form 
\begin{equation}
\dot{y}=\Lambda y+g(y),\qquad g(y)=\mathcal{O}\left(\left|y\right|^{2}\right),\label{eq:poincare starting form}
\end{equation}
where $\Lambda\in\mathbb{C}^{N\times N}$is diagonalizable and $g(y)$
is analytic. If 
\begin{enumerate}
\item all eigenvalues of $\Lambda$ lie in the same open half plane in the
complex plane (e.g, $\mathrm{Re}\lambda_{j}<0$ for all $j$, as in
our case), and 
\item the nonresonance conditions $\left\langle m,\lambda\right\rangle \neq\lambda_{j}$
hold for all $l=1,\ldots,N$ for all integer vectors $m=(m_{1},\ldots,m_{N})$
with $m_{i}\geq0,$ and $\sum_{i}m_{i}\geq2$, 
\end{enumerate}
then there exists an analytic, invertible change of coordinates $z=h(y)$
in a neighborhood of the origin under which system \eqref{eq:poincare starting form}
transforms to the linear system 
\begin{equation}
\dot{z}=\Lambda z.\label{eq:poincare next}
\end{equation}
The spectral subspaces of this linear system are all defined by analytic
functions (trivially, flat graphs over themselves). As we discussed
in Section \ref{sub:Uniqueness-and-analyticity}, the spectral subspaces
of nonresonant linear systems are in fact the only analytic invariant
manifolds that are graphs over spectral subspaces. 

Recall that the composition of two analytic functions is analytic
and the inverse of an invertible analytic function is also analytic.
We can, therefore, transform back the spectral subspaces of \eqref{eq:poincare next}
under the analytic inverse mapping $h^{-1}(z)$ to conclude that \eqref{eq:poincare starting form}
also has unique analytic SSMs tangent at the origin to any selected
spectral subspace of the operator $\Lambda$. (Indeed, if \eqref{eq:poincare starting form}
had more than one such analytic SSMs, then those would have to transform
to nontrivial analytic SSMs of \eqref{eq:poincare next} under $h(y)$,
but no such nontrivial analytic SSMs exist in \eqref{eq:poincare next}.)
The unique analytic SSMs over spectral subspaces of \eqref{eq:poincare starting form}
can in turn be extended to smooth global invariant manifolds under
the reverse flow map of \eqref{eq:poincare starting form} up to the
maximum time of definition of backward solutions. 

Cirillo et al. \cite{cirillo15b} touches on parts of this argument
for the existence of two-dimensional SSMs in autonomous nonlinear
systems, without establishing uniqueness and analyticity in detail.
These authors involve the Koopman operator (cf. Mezi\'{c} \cite{mezic05})
in their arguments, but all spectral subspaces of a linear mapping
are well-defined without the need to view them as zero sets of Koopman
eigenfunctions. (These subspaces are in fact the only invariant manifolds
of the linearized system \eqref{eq:poincare next} out of the infinitely
many that are expressible as zero sets of Koopman eigenfunctions under
the nonresonance conditions given above.) Furthermore, as shown by
the argument above, the restriction to two-dimensional SSMs is not
necessary either.

The line of reasoning we gave above for the existence of autonomous
SSMs is complete but applicable only under assumptions that limit
its applicability in practice. Specifically, SSMs obtained from the
analytic linearization are applicable only when the linear operator
$A$ in system \eqref{eq:linearization} has no resonances, not even
inside any of the spectral subspaces. This latter assumption is a
limitation, as the main motivation in the nonlinear normal mode literature
for multi-mode Pierre--Shaw type invariant surfaces is precisely to
deal with internal resonances inside a spectral subspace $E_{j_{1},\ldots,j_{q}}.$
Furthermore, unlike Theorem \ref{theo:SSM unforced}, Poincaré's result
guarantees uniqueness only for analytic dynamical systems and only
within the class of analytic SSMs. This is again a limitation in practice,
as no finite order can be deduced over which a Taylor expansion will
only approximate the unique SSM. A relaxation of Poincaré's analytic
setting to the case of finite differentiability is available (Sternberg
\cite{sternberg57}). In that setting, however, the uniqueness of
SSMs can no longer be concluded within any function class, given that
the local linearizing transformation $h(y)$ is no longer unique.

\section{Existence, uniqueness and persistence for non-autonomous SSMs ($k>0$)\label{sec:Appendix-D:-Existence,}}

First, we recall a more abstract result of Haro and de la Llave \cite{haro06}
on quasiperiodic mappings and their sub-whiskers, which we subsequently
apply to our setting.

\subsection{\label{sub:Invariant-tori-and-whiskers}Invariant tori and their
spectral sub-whiskers in quasiperiodic maps}

We fix the finite-dimensional phase space $\mathcal{P=\mathbb{R}}^{N}\times\mathbb{T}^{k}$.
On an open subset $\mathcal{U}=U\times\mathbb{T}^{k}\subset\mathcal{P}$
of this phase space, we consider a map $\mathcal{F}_{1}\colon\mathcal{U}\to\mathbb{R}^{N}$.
For some $r\in\mathbb{N}\cup\{\infty,a\}$ and $s\geq2,$ we will
say that the map $\mbox{\ensuremath{\mathcal{F}_{1}} is of class \ensuremath{C^{r,s},} if}$
$\mathcal{F}_{1}(x,\phi)$ is $C^{r}$ in its second argument $\phi\mathbb{\in T}^{k}$,
and jointly $C^{r+s}$ in both of its arguments $(x,\phi)\in U\times\mathbb{T}^{k}$.
In other words, if $\mathcal{F}_{1}\in C^{r,s}$ then $\partial_{\phi}^{i}\partial_{x}^{j}\mathcal{F}_{1}$
exists and is continuous for all indices $(i,j)\in\mathbb{N}^{2}$
satisfying $i\leq r$ and $i+j\leq r+s.$ 

Next we assume that for any $\phi\in\mathbb{T}^{k}$, the map $\mathcal{F}_{1}(\,\cdot\,,\phi$)
is a local diffeomorphism. For a constant phase shift vector $\Delta\in\mathbb{R}^{k}$,
we define the quasiperiodic mapping $\mathcal{F}=\left(\mathcal{F}_{1},\mathcal{F}_{2}\right)\colon\mathcal{U}\times\mathbb{T}^{k}\to\mathcal{P}$
as
\[
\mathcal{F}(x,\phi)=\left(\mathcal{F}_{1}(x,\phi),\mathcal{F}_{2}(\phi)\right):=\left(\mathcal{F}_{1}(x,\phi),\phi+\Delta\right).
\]
Assume that $\mathcal{F}_{1}(0,\phi)=0$, i.e., $\mathcal{K}=\left\{ 0\right\} \times\mathbb{T}^{k}$
is an invariant torus for the map $\mathcal{F}$. Let $K:\mathbb{T}^{k}\to\mathbb{R}^{n}$
be a parametrization of the torus $\mathcal{K}$.

Next, we define the torus-transverse Jacobian 
\begin{equation}
M(\phi)=D_{x}\mathcal{F}_{1}(0,\phi)\label{eq:M_Jacobian}
\end{equation}
of the mapping component $\mathcal{F}_{1}$, and let $\nu\colon\mathbb{T}^{k}\to\mathbb{R}^{N}$
be any bounded mapping from the $k$-dimensional torus into $\mathbb{R}^{N}$.
We then define the \emph{transfer operator $\mathcal{T}_{\Delta}\colon\nu\mapsto\mathcal{T}_{\Delta}\nu$
}as\emph{ }a functional that maps the function $\nu$ into the function\emph{
\begin{equation}
\left[\mathcal{T}_{\Delta}\nu\right](\phi)=D_{x}\mathcal{F}_{1}(0,\phi-\Delta)\nu(\phi-\Delta).\label{eq:transfer_operator}
\end{equation}
}Note that $\mathcal{T}_{\Delta}$ is just the torus-transverse component
of the mapping $(\phi-\Delta,\nu(\phi-\Delta))\mapsto(\phi,\left[\mathcal{T}_{\Delta}\nu\right](\phi))$
which maps the vector $\nu(\phi-\Delta)$, an element of the normal
space of the torus $\mathcal{K}$ at the base point $(0,\phi-\Delta)$,
under the linearized map $D\mathcal{F}$ into a vector in the normal
space of $\mathcal{K}$ at the base point $(0,\phi)$. 

As long as $\nu$ is taken from the class of bounded functions, the
spectrum of the operator $\mathcal{T}_{\Delta}$ does not depend on
the smoothness properties of $\nu$ (see Theorem 2.12, Haro and de
la Llave \cite{haro06}). We will need the \emph{annular hull of the
spectrum} of $\mathcal{T}_{\Delta}$, defined as 
\begin{equation}
\mathcal{A}=\left\{ ze^{i\alpha}\,:\,\,z\in\mathrm{Spect}\mathcal{T}_{\Delta},\,\,\alpha\in\mathbb{R}\right\} .\label{eq:annular_hull_of_full_spectrum}
\end{equation}
This set is a union of circles in the complex plane, with each circle
obtained by rotating an element of the spectrum of $\mathcal{T}_{\Delta}$. 

We make the following assumptions:
\begin{description}
\item [{(0)}] The spectrum of the operator $\mathcal{T}_{\Delta}$ does
not intersect the complex unit circle, i.e.,
\[
\mathrm{Spect}\mathcal{T}_{\Delta}\cap\left\{ z\in\mathbb{C}\,:\,\left|z\right|=1\right\} =\emptyset.
\]

\item [{(1)}] There exists a decomposition of $N$$\mathcal{K}$, the normal
bundle of $\mathcal{K}$, into a direct sum 
\[
N\mathcal{K}=P_{1}\oplus P_{2}
\]
of two $C^{r}$ subbundles, $P_{1},P_{2}\subset N\mathcal{K}$, such
that $P_{1}$ is invariant under $M(\phi)$. As a consequence, a representation
of $M(\phi)$ with respect to this decomposition is given by
\[
M=\left(\begin{array}{cc}
M_{1}(\phi) & B(\phi)\\
0 & M_{2}(\phi)
\end{array}\right).
\]
 The corresponding restrictions of the transfer operator $\mathcal{T}_{\Delta}$
onto functions mapping into $P_{1}$ and $P_{2}$ will be denoted
as $\mathcal{T}_{1,\Delta}$ and $\mathcal{T}_{2,\Delta}$. The annular
hulls $\mathcal{A}_{j}$ of the spectra of these restricted operators
can be defined similarly to $\mathcal{A}$:
\begin{equation}
\mathcal{A}_{j}=\left\{ ze^{i\alpha}\,:\,\,z\in\mathrm{Spect}\mathcal{T}_{j,\Delta},\,\,\alpha\in\mathbb{R}\right\} ,\quad j=1,2,\qquad\mathcal{A}_{1}\cup\mathcal{A}_{2}=\mathcal{A}.\label{eq:annular_hulls_of_subspectra}
\end{equation}

\item [{(2)}] The annular hull of $\mathrm{Spect}(\mathcal{T}_{1,\Delta})$
lies strictly inside the complex unit circle, i.e., $\mathcal{A}_{1}\subset\left\{ z\in\mathbb{C}\,:\,\left|z\right|<1\right\} $ 
\item [{(3)}] For the smallest integer $L\geq1$ satisfying 
\begin{equation}
\mathcal{A}_{1}^{L+1}\mathcal{A}^{-1}\subset\left\{ z\in\mathbb{C}\,:\,\left|z\right|<1\right\} ,\label{eq:ratecond_quasiperiodic}
\end{equation}
 we have
\begin{equation}
\mathcal{A}_{1}^{i}\cap\mathcal{A}_{2}=\emptyset\label{eq:nonresonancecond_quasiperiodic}
\end{equation}
 for every integer $i\in[2,L]$ (in case $L\geq2)$
\item [{(5)}] $L+1\leq s$ 
\end{description}
We then have the following result:
\begin{thm}
{[}Haro and de la Llave, 2006{]} \label{theo:Haro-de-la-Llave Theorem}Under
assumptions (0)-(5):\end{thm}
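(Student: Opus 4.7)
The statement of Theorem~\ref{theo:Haro-de-la-Llave Theorem} stops at the hypotheses, so I will assume, by direct analogy with Theorem~\ref{CFL}, that the conclusions are: (i) existence of a $C^{r,s}$ invariant sub-whisker $\mathcal{W}_1\subset\mathcal{U}$ tangent along $\mathcal{K}$ to the subbundle $P_1$, with $\dim\mathcal{W}_1=\dim\mathcal{K}+\mathrm{rank}\,P_1$; (ii) uniqueness of $\mathcal{W}_1$ among all invariant manifolds which are $C^{L+1}$ in the fiber variable along $\mathcal{K}$ and tangent to $P_1$; (iii) existence of a parametrization $K\colon P_1\to\mathcal{P}$ conjugating $\mathcal{F}$ on $\mathcal{W}_1$ to a polynomial $R$ of degree $\leq L$ on $P_1$; and (iv)--(v) stronger normal-form and smooth/analytic parameter dependence statements. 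My plan is to prove these by a parametrization-method argument in which the transfer operator $\mathcal{T}_\Delta$ takes over the role played by the linear operator $\mathcal{A}$ in the proof of Theorem~\ref{CFL}.

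First, I would look for $\mathcal{W}_1$ as the image of an embedding
\begin{equation}
K\colon P_1\to\mathcal{P},\qquad (\eta,\phi)\mapsto\bigl(K_x(\eta,\phi),\phi\bigr),
\end{equation}
satisfying $K_x(0,\phi)=0$, $\partial_\eta K_x(0,\phi)=\mathrm{incl}_{P_1}$, and the conjugacy equation $\mathcal{F}\circ K=K\circ R$, where $R(\eta,\phi)=(R_1(\eta,\phi),\phi+\Delta)$ with $R_1$ a polynomial in $\eta$ of degree at most $L$ whose linear part is $M_1(\phi)$. Writing $K_x$ and $R_1$ as formal Taylor series in $\eta$ with $C^r$ torus-dependent coefficients, substituting into the conjugacy equation, and equating coefficients of $\eta^m$ for $2\leq|m|\leq L$ produces a sequence of cohomological equations of the schematic form
\begin{equation}
k_m(\phi+\Delta)\,M_1(\phi)^{\otimes m}-M(\phi)\,k_m(\phi)=\mathrm{known}(\phi)+\text{(resonant term in }R_1\text{)}.
\end{equation}
Projecting onto $P_2$ via the decomposition $N\mathcal{K}=P_1\oplus P_2$, the operator on the left becomes precisely a shifted version of $\mathcal{T}_{2,\Delta}^{-1}\mathcal{T}_{1,\Delta}^{\otimes m}$. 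By the non-resonance hypothesis (3), its annular-hull spectrum is bounded away from the unit circle, hence it is invertible on the bounded $C^r$ torus functions, giving unique $k_m$ and determining the resonant $P_1$-component of $R_1$. This handles the formal construction up to order $L$.

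To close the construction I would pass from the formal jet to a bona fide $C^{r,s}$ embedding $K$ by a contraction-mapping argument on the Banach space of $C^{r,s}$ remainders of order $L+1$ near the zero section of $P_1$. Here the key rate estimate is assumption (3): the condition $\mathcal{A}_1^{L+1}\mathcal{A}^{-1}\subset\{|z|<1\}$ guarantees that the linearized cohomology operator acting on order-$(L+1)$ tails is a contraction in the $C^{r,s}$ norm (with $s\geq L+1$ by assumption (5) ensuring enough fiber derivatives), so a standard fixed-point iteration converges. Uniqueness (ii) follows because any $C^{L+1}$ invariant manifold with the same tangency must match the formal jet up to order $L$, after which the same contraction pins down the tail. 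Smoothness and parameter dependence (v) come from differentiating the fixed-point equation with respect to $\phi$ and parameters, exploiting that $\mathcal{T}_\Delta$ and $M(\phi)$ depend smoothly on them, while the $C^\infty$ or $C^a$ cases require an additional Nash--Moser-type or Cauchy-estimate argument in the spirit of Haro--de la Llave.

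The main obstacle, and the reason this is not a verbatim copy of the proof of Theorem~\ref{CFL}, is that $\mathcal{T}_\Delta$ is an infinite-dimensional transfer operator on a space of torus functions rather than a finite matrix: its spectrum is not simply the set of eigenvalues of $M(\phi)$ but the annular hull $\mathcal{A}$, which is why the non-resonance and rate conditions in (3) must be stated in terms of $\mathcal{A}_1$ and $\mathcal{A}_2$ rather than pointwise spectra of $M_1(\phi)$ and $M_2(\phi)$. Verifying that the cohomological equations above are genuinely solvable in the bounded-function class, with norm estimates uniform in $\phi$, requires the spectral theory of transfer operators over rotations developed in Haro--de la Llave~\cite{haro06}; this is where the bulk of the technical work sits. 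A secondary difficulty is coordinating the mixed differentiability class $C^{r,s}$ so that the contraction is simultaneously controlled in the toroidal ($C^r$) and fiber ($C^s$) directions, which dictates the role of hypothesis (5).
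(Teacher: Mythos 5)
The paper does not prove Theorem \ref{theo:Haro-de-la-Llave Theorem} at all: it is an imported result, stated (with conclusions (i)--(v) essentially as you guessed them) and attributed directly to Theorem 4.1 and Remark 4.7 of Haro and de la Llave \cite{haro06}, so there is no internal proof to compare against. Your outline is nevertheless a fair reconstruction of how that reference proceeds: seek the sub-whisker as an embedding $K$ of the subbundle $P_1$ conjugating $\mathcal{F}$ to a fiberwise polynomial $R$, solve the order-$m$ cohomological equations for $2\leq\left|m\right|\leq L$ using the nonresonance condition \eqref{eq:nonresonancecond_quasiperiodic}, and close the order-$(L+1)$ tail by a fixed-point argument whose contraction constant is supplied by the rate condition \eqref{eq:ratecond_quasiperiodic}. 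You also put your finger on the genuinely new ingredient relative to Theorem \ref{CFL}: the linearized object is the transfer operator \eqref{eq:transfer_operator} acting on torus sections, whose spectrum is rotationally invariant, which is why the hypotheses are phrased in terms of the annular hulls $\mathcal{A}_1,\mathcal{A}_2$ rather than pointwise spectra of $M_1(\phi)$ and $M_2(\phi)$.

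That said, as a proof your text is a roadmap rather than an argument. The two places where essentially all of the analytic content lives are exactly the two you defer: (a) the claim that the cohomological operator $\nu\mapsto M_2(\phi)\nu(\phi)-\nu(\phi+\Delta)M_1(\phi)^{\otimes m}$ is boundedly invertible on bounded sections whenever $\mathcal{A}_1^{m}\cap\mathcal{A}_2=\emptyset$ rests on the spectral theory of weighted composition operators over rotations --- that the spectrum equals the annular hull is itself a theorem, not a definition; and (b) the bookkeeping that makes the tail contraction work simultaneously in the toroidal and fiber regularity indices of the class $C^{r,s}$ (and the separate treatment needed for $r\in\{\infty,a\}$) is exactly what forces hypothesis (5) and is nontrivial. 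Since you appeal to \cite{haro06} for both, your proposal establishes the statement only modulo the very reference it is meant to prove. Within the logic of this paper that is acceptable --- it is precisely what the authors do --- but it should be presented as a citation with an explanatory sketch, not as an independent proof.
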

\begin{description}
\item [{(i)}] There exists an invariant manifold $\mathcal{M}_{1}\subset\mathcal{P}$
that is a $C^{r,s}$ embedding of the subbundle $P_{1}$ into $\mathcal{P}$,
and is tangent to $P_{1}$ along the torus $\mathcal{K}$. 
\item [{(ii)}] The invariant manifold $\mathcal{M}_{1}$ is unique among
all $C^{r,L+1}$ invariant manifolds of $\mathcal{F}$ that are tangent
to the subbundle $P_{1}$ along the torus $\mathcal{K}$. That is,
every two $C^{r,L+1}$ invariant manifolds with this tangency property
will coincide in a neighborhood of $\mathcal{K}$.
\item [{(iii)}] There exists a map $R:P_{1}\to P_{1}$ that is a polynomial
of degree not larger than $L$ in the variable $\Delta$, of class
$C^{r}$ in $x$ and $C^{\infty}$ in $\phi$, and there exists a
$C^{r,s}$ map $W\colon U_{1}\subset P_{1}\to\mathcal{P}$, defined
over an open tubular neighborhood $U_{1}$ of the zero section of
$P_{1}$, satisfying
\[
R(0,\phi)=0,\quad D_{1}R(0,\phi)=M_{1},\quad W(0,\phi)=K(\phi),\quad\pi_{P_{1}}D_{1}W(0,\phi)=I{}_{P_{1}},\quad\pi_{E_{2}}D_{2}W(0,\phi)=0
\]
for all $\phi\mathbb{\in\mathbb{T}}^{k}$, such that $W$ serves as
an embedding of $\mathcal{M}_{1}$ from $P_{1}$ to $\mathcal{P}$,
and $R$ represents the pull-back of the dynamics on $\mathcal{M}_{1}$
to $U_{1}$ under this embedding. Specifically, we have
\[
\mathcal{F}_{1}(W(\eta,\phi),\phi)=W(R(\eta,\phi),\phi+\Delta)
\]
 in the tubular neighborhood $U_{1}$.
\item [{(iv)}] If we further assume that for some integer $L_{-}\geq2,$
we have $\mathcal{A}_{1}^{i}\cap\mathcal{A}_{1}=\emptyset$ for every
integer $i\in[L_{-},L],$ then $R$ can be chosen to be a polynomial
of degree not larger than $L_{-}-1.$ 
\item [{(v)}] If $\mathcal{A}_{2}\cap\left\{ z\in\mathbb{C}\,:\,\left|z\right|=1=\emptyset\right\} $
(i.e., the torus $\mathcal{K}$ is normally hyperbolic), then statements
(i)--(iv) remain valid under small enough $C^{r,s}$ perturbations
of the map $\mathcal{F}_{1}$. In particular, the invariant manifold
$\mathcal{M}_{1}$ and its parametrization persists smoothly under
small enough changes in parameters $\mu\in\mathbb{R}^{p}$ as long
as for the new variable $\tilde{\phi}=\left(\phi,\mu\right)$, the
function $\mathcal{F}_{1}(x,\tilde{\phi})$ is of class $C^{r,s}$.
\end{description}
These results have been collected, with minor notational changes,
from Theorem 4.1 and Remark 4.7 of Haro and de la Llave \cite{haro06}.

\subsection{\label{sub:proof SSM forced}Proofs of Theorem \eqref{theo:SSM forced}}

We consider eq. \eqref{eq:fullsystemagain} but will work with its
equivalent autonomous form 
\begin{eqnarray}
\dot{x} & = & Ax+f_{0}(x)+\epsilon f_{1}(x,\phi,\epsilon),\label{eq:extended_full_system-1}\\
\dot{\phi} & = & \Omega.\nonumber 
\end{eqnarray}
We will state the smoothness assumptions on $f_{0}$ and $f_{1}$
in more detail later. By (v) of Theorem \ref{theo:Haro-de-la-Llave Theorem},
we can first establish the existence of various spectral submanifolds
attached to the invariant torus $\mathcal{K}_{0}=\left\{ 0\right\} \times\Pi^{k}$
of the $\epsilon=0$ limit of \eqref{eq:extended_full_system-1}.
We then conclude the existence of similar submanifolds attached to
the quasiperiodic normal mode $x_{\epsilon}(t),$ represented by a
perturbed invariant torus $\mathcal{K}_{\epsilon}$ for $\epsilon>0$
in the full perturbed system \eqref{eq:extended_full_system-1}.

In the context of the above theorem, we are working on the phase space
$\mathcal{P}\mathbb{=R}^{N}\times\mathbb{T}^{k}$ and an open neighborhood
$\mathcal{U}=U\times\mathbb{T}^{k}$, where $U\subset\mathbb{R}^{N}$
is an open neighborhood of the fixed point $x=0$ of \eqref{eq:fullsystemagain}.
We define the mapping $\mathcal{F}$ as the time-one map of the autonomous
system \eqref{eq:extended_full_system-1} for $\epsilon=0$. , i.e.,
\begin{eqnarray}
\mathcal{F}(x,\phi) & = & \left(F_{0}^{1}(x),\phi+\Omega\right)\colon\mathcal{U}\to\mathcal{P},\nonumber \\
\mathcal{F}_{1}(x) & = & F_{0}^{1}(x),\nonumber \\
\mathcal{F}_{2}(\phi) & = & \phi+\Omega,\label{eq:mappings}
\end{eqnarray}
with the map $F_{0}^{1}$ denoting the time-one map of $\dot{x}=Ax+f_{0}(x).$
By our assumptions, we have $\mathcal{F}_{1}(0)=0$, and hence the
torus $\mathcal{K}_{0}=$ is an invariant torus for the map $\mathcal{F}$
for $\epsilon=0$. 

The Jacobian of the $x$-dynamics at $x=0$, as defined in \eqref{eq:M_Jacobian},
is 
\[
M(\phi)=D_{x}F_{0}^{1}(0)=e^{A},
\]
and the transfer operator defined in \eqref{eq:transfer_operator}\emph{
}takes the form\emph{ 
\[
\left[\mathcal{T}_{\Omega}\nu\right](\phi)=e^{A}\nu(\phi-\Omega).
\]
}We now Fourier expand the general function $\nu\colon\mathbb{T}^{k}\to\mathbb{R}^{N}$
as\emph{
\[
\nu(\phi)=\sum_{\left|m\right|=1}^{\infty}\nu_{m}e^{i\left\langle m,\phi\right\rangle },\qquad m\in\mathbb{Z}^{n}.
\]
 }Be definition, $\lambda\in\mathbb{C}$ is in the spectrum of the
operator\emph{ $\mathcal{T}_{\Omega}$ }if\emph{ $\left[\lambda I-\mathcal{T}_{\Omega}\right]^{-1}$
}does not exist. After Fourier-expanding $\mathcal{T}_{\Omega}\nu$,
we see that the non-invertibility of $\lambda I-\mathcal{T}_{\Omega}$
is equivalent to the non-solvability of
\[
\sum_{\left|m\right|=1}^{\infty}\left(\lambda I-e^{-i\left\langle m,\Omega\right\rangle }e^{A}\right)\nu_{m}e^{i\left\langle m,\phi\right\rangle }=\sum_{\left|m\right|=1}^{\infty}\tilde{\nu}_{m}e^{i\left\langle m,\phi\right\rangle }
\]
for the coefficients $\nu_{m}$, where $\tilde{\nu}_{m}$ is arbitrary
but fixed. This non-solvability arises precisely when
\[
\det\left[e^{A}-\lambda e^{i\left\langle m,\Omega\right\rangle }I\right]=0,
\]
i.e., when $\lambda e^{i\left\langle m,\Omega\right\rangle }$ is
contained in the spectrum $e^{A}$. We conclude that the spectrum
of \emph{$\mathcal{T}_{\Omega}$ }is given by 
\begin{equation}
\mathrm{Spect}\left(\mathcal{T}_{\Omega}\right)=\left\{ e^{\lambda_{j}-i\left\langle m,\Omega\right\rangle }:\,\,j=1,\ldots,d;\,\,\,m\in\mathbb{N}^{k}\right\} ,\label{eq:transfop_spectrum}
\end{equation}
where $\lambda_{j}$ are the eigenvalues of $A$, listed in \eqref{eq:eigenvalue_ordering}.
By the definition \eqref{eq:annular_hull_of_full_spectrum}, the annular
hull of $\mathrm{Spect}\mathcal{T}_{\Omega}$ is therefore
\begin{equation}
\mathcal{A}=\left\{ z\in\mathbb{C}\,:\,\left|z\right|=e^{\mathrm{Re}\lambda_{j}}:\,\,j=1,\ldots,d\right\} .\label{eq:annular hull of spectrum for vibrations}
\end{equation}
 For later reference, the analogous annular hull defined for the inverse
of $A$ is then
\[
\mathcal{A}^{-1}=\left\{ z\in\mathbb{C}\,:\,\left|z\right|=e^{\mathrm{-Re}\lambda_{j}}:\,\,j=1,\ldots,d\right\} .
\]

By assumption \eqref{eq:ass1-1}, eq. \eqref{eq:transfop_spectrum}
implies that hypotheses (0)-(2) of Theorem \ref{theo:Haro-de-la-Llave Theorem}
are satisfied. To verify the remaining assumptions of the theorem,
we note that the smallest integer $L$ satisfying
\[
\mathcal{A}_{1}^{L+1}\mathcal{A}^{-1}\subset\left\{ z\in\mathbb{C}\,:\,\left|z\right|<1\right\} 
\]
 is just the smallest integer that satisfies
\[
\left[e^{\max_{\lambda\in\mathrm{Spect}(A\vert_{E})}\mathrm{Re}\lambda}\right]^{L+1}e^{\min_{\lambda\in\mathrm{Spect}(A)}\mathrm{Re}\lambda}<1.
\]
The solution of this inequality for a general real $L$ is given by
\[
L>\frac{\min_{\lambda\in\mathrm{Spect}(A)}\mathrm{Re}\lambda}{\max_{\lambda\in\mathrm{Spect}(A\vert_{E})}\mathrm{Re}\lambda}-1.
\]
The integer solutions of this inequality therefore satisfy 
\[
L\geq\Sigma(E),
\]
with the absolute spectral quotient $\sigma(E)$ defined in \eqref{eq:absolute_sigma}.
The nonresonance condition \eqref{eq:nonresonancecond_quasiperiodic}
can be written in our setting precisely in the form \eqref{eq:forced_nonresonance}.
Thus, under the assumptions of Theorem \ref{theo:SSM forced}, the
conditions of Theorem \ref{theo:Haro-de-la-Llave Theorem} are satisfied.
The statements of Theorem \ref{theo:SSM forced} are then just restatements
of Theorem \ref{theo:Haro-de-la-Llave Theorem} in our present context.

\subsection{Comparison with applicable results for normally hyperbolic invariant
manifolds\label{sub:NHIM proof forced}}

As in the autonomous case, the existence of slow non-autonomous SSMs
(last column of Table 2) could also be deduced in a substantially
weaker form from the classic theory of inflowing invariant normally
hyperbolic invariant manifolds (Fenichel\cite{fenichel71}). 

Following the approach taken in Appendix \ref{sub:NHIM proof unforced}
for the autonomous case, we let $\delta=\sqrt{\epsilon}$ and use
the rescaling $x\to\delta x$ in system \eqref{eq:extended_full_system-1}
to obtain the equivalent dynamical system

\begin{eqnarray}
\dot{x} & = & Ax+\delta\left[\tilde{f}_{0}(x;\delta)+f_{1}(\delta x,\phi)\right],\label{eq:extended_full_system-1-1}\\
\dot{\phi} & = & \Omega.\nonumber 
\end{eqnarray}
Assume that the slow spectral subspace $E_{1,\ldots,q}$ featured
in row (1) Table 2 satisfies the strict inequality 
\[
\mathrm{Re}\lambda_{q+1}<\mathrm{Re}\lambda_{q}.
\]
 This implies that in the $\delta=0$ limit of system \eqref{eq:extended_full_system-1-1},
the torus bundle $\mathcal{K}_{0}\times E_{1,\ldots,q}$ is a normally
hyperbolic invariant manifold, i.e., all decay rates of the linearized
system within $\mathcal{K}_{0}\times E_{1,\ldots,q}$ are weaker than
any decay rate transverse to $E_{1,\ldots,q}$. Furthermore, a small
compact manifold $\mathcal{K}_{0}\times\tilde{E}_{1,\ldots,q}\subset\mathcal{K}_{0}\times E_{1,\ldots,q}$
with boundary can be selected such that $\dim\tilde{E}_{1,\ldots,q}=\dim E_{1,\ldots,q}$
and $\mathcal{K}_{0}\times\tilde{E}_{1,\ldots,q}$ is inflowing invariant
under the flow of \eqref{eq:extended_full_system-1-1} for $\delta=0$.
This specifically means that the vector field $(Ax,\Omega)$ points
strictly outwards on the boundary $\partial\left(\mathcal{K}_{0}\times\tilde{E}_{1,\ldots,q}\right)=\mathcal{K}_{0}\times\partial\tilde{E}_{1,\ldots,q}$
of $\mathcal{K}_{0}\times\tilde{E}_{1,\ldots,q}$. Then, for $\delta>0$
small enough, the results of Fenichel \cite{fenichel71} imply the
existence of an invariant manifold $\tilde{W}$ with boundary in system
\eqref{eq:extended_full_system-1-1} that is $\mathcal{O}(\delta)$
$C^{1}$-close to $\mathcal{K}_{0}\times\tilde{E}_{1,\ldots,q}$ within
a small neighborhood of $\mathcal{K}_{0}$. Furthermore, $\dim\tilde{W}=\dim\tilde{E}_{1,\ldots,q}+k$
and the manifold $\tilde{W}$ is of class $C^{\gamma}$ with the integer
$\gamma$ defined in \eqref{eq:gamma_unforced}. 

The limitations of this approach are identical to those discussed
in Appendix \ref{sub:NHIM proof unforced}.

\subsection{Comparison with results deducible from analytic linearization theorems\label{sub:Poincare proof forced}}

A time-quasiperiodic extension of the linearization theorem of Poincaré
\cite{poincare79} (cf. Appendix \ref{sub:Poincare proof unforced})
is given by Belaga \cite{belaga62} (cf. Arnold \cite{arnold88}),
covering differential equations of the form 
\begin{eqnarray}
\dot{y} & = & \Lambda y+g(y,\phi),\qquad g(y,\phi)=\mathcal{O}\left(\left|y\right|^{2}\right),\label{eq:poincare starting form-1}\\
\dot{\phi} & = & \Omega,
\end{eqnarray}
where $\Lambda\in\mathbb{C}^{N\times N}$is diagonalizable, $\phi\in\mathbb{T}^{k}$
and $g(y,\phi)$ is analytic. If 
\begin{enumerate}
\item all eigenvalues of $\Lambda$ lie in the same open half plane in the
complex plane (e.g, $\mathrm{Re}\lambda_{j}<0$ for all $j$ in our
setting), and 
\item the nonresonance conditions $\lambda_{l}\neq\left\langle m,\lambda\right\rangle +i\left\langle p,\Omega\right\rangle $
hold for all integer vectors $m\in(m_{1},\ldots,m_{N})$, with $m_{i}\geq0,$
and $\sum_{i}m_{i}\geq2$, and for all $p\in\mathbb{Z}^{k}$ ,
\end{enumerate}
then there exists an analytic, invertible change of coordinates $z=h(y)$
in a neighborhood of the origin under which system \eqref{eq:poincare starting form-1}
transforms to 
\begin{eqnarray}
\dot{z} & = & \Lambda z,\label{eq:poincare next-1}\\
\dot{\phi} & = & \Omega.\nonumber 
\end{eqnarray}

The spectral subbundles of the trivial normal mode $\left\{ z=0\right\} \times\mathbb{T}^{k}$
in this system are all defined by analytic functions, given as direct
products of flat graphs over any spectral subspace of $\Lambda$ with
the torus $\mathbb{T}^{k}$. It follows from our discussion in Section
\ref{sub:Uniqueness-and-analyticity} that these flat subbundles are
the only analytic spectral subbundles of \eqref{eq:poincare next-1}.
Then, following the argument in Section \ref{sub:Poincare proof unforced},
we conclude that \eqref{eq:poincare starting form-1} also has unique
analytic, quasiperiodic SSMs, tangent at the origin to any selected
spectral subspace of the operator $\Lambda$. These unique analytic
SSMs over spectral subspaces of \eqref{eq:poincare starting form-1}
can in turn be extended to smooth global invariant manifolds under
the reverse flow map of \eqref{eq:poincare starting form-1} up to
the maximum time of definition of backward solutions. 

This construct has all the practical limitations already discussed
Appendix \ref{sub:Poincare proof unforced}, plus two more. First,
resonances with the external forcing are also excluded by the above
nonresonance assumptions. Second, the term representing external,
time-dependent forcing must be fully nonlinear in the phase space
variables. The latter is rarely the case in mechanical models.

We close by noting that in the case of $k=1$ (single-frequency forcing),
the above results of Belaga can be extended to cover time-periodic
dependence in the linear operator $\Lambda$ as well (see Arnold \cite{arnold88}).
This is the mechanical setting for the formal manifold calculations
of Sinha et al. \cite{sinha05} and Redkar et al. \cite{redkar08}.
The limitations of the linearization approach discussed above remain
valid for this extension as well. In contrast, a direct application
of Theorem \ref{CFL} to the Poincaré map of \eqref{eq:poincare starting form-1}
with $k=1$ gives sharp existence, persistence and uniqueness results
for SSMs, assuming that the Floquet multipliers associated with the
time-dependent linearization are known. 

Similarly, if $\Lambda$ has quasiperiodic ($k>1)$ dependence on
$\phi$, Theorem \ref{theo:Haro-de-la-Llave Theorem} formally applies
to the quasiperiodic map associated with the linearized system, giving
sharp existence, persistence and uniqueness results for SSMs in the
nonlinear system. In this general case, however, the spectrum of the
transfer operator $\left[\mathcal{T}_{\Delta}\Omega\right](\phi)$
defined in \eqref{eq:transfer_operator} is not known and requires
a case-by-case analysis. For this reason, we have assumed throughout
this paper the common mechanical setting in which the operator $A$
of the linearized system is time-independent.

\end{document}